\newtheorem{theorem}{Theorem}[section]
\newtheorem{lemma}[theorem]{Lemma}
\newtheorem{proposition}[theorem]{Proposition}
\newtheorem{corollary}[theorem]{Corollary}
\theoremstyle{definition}
\newtheorem{definition}[theorem]{Definition}
\newtheorem{question}{Question}
\theoremstyle{remark}
\newtheorem{remark}[theorem]{Remark}
\numberwithin{equation}{section}
\newcommand{\abs}[1]{\lvert#1\rvert}
\def\fnote#1{\footnote}
\def\ignora#1{}
\def\n3#1{\left\vert  \! \left\vert \! \left\vert \, #1 \, \right\vert \!
  \right\vert \! \right\vert }
\newcommand{\pten}{\ensuremath{\widehat{\otimes}_\pi}}
\def\<{\langle}
\def\>{\rangle}
\def\vspan{\operatorname{span}}
\def\norma{\|\hspace*{.35cm}\|}
\def\diam{\operatorname{diam}}
\def\Mid{\operatorname{Mid}}
\newcommand{\Natural}{\mathbb N}
\newcommand{\Real}{\mathbb R}
\newcommand{\set}[1]{\left\{#1\right\}}
\newcommand{\restricted}{\mathord{\upharpoonright}}
\newcommand{\dist}{\mathop{\mathrm{dist}}\nolimits}
\newcommand{\norm}[1]{\left\Vert#1\right\Vert}
\newcommand{\duality}[1]{\left\langle#1\right\rangle}
\newcommand{\clco}{\mathop{\overline{\mathrm{conv}}}\nolimits}
\newcommand{\closedball}[1]{B_{#1}}
\newcommand{\sphere}[1]{S_{#1}}
\newcommand{\Free}{{\mathcal F}}
\newcommand{\Lip}{{\mathrm{Lip}}_0}
\newcommand{\justLip}{{\mathrm{Lip}}}
\newcommand{\conv}{\mathop\mathrm{conv}}
\newcommand{\ext}[1]{\mathrm{ext}\left(#1\right)}
\newcommand{\strexp}[1]{\mathrm{str exp}\left(#1\right)}
\begin{document}

\title{ A characterisation of the Daugavet property in spaces of Lipschitz functions }

\author {Luis Garc\'ia-Lirola}
\thanks{The research of L. Garc\'ia-Lirola was supported by the grants MINECO/FEDER MTM2014-57838-C2-1-P and Fundaci\'on S\'eneca CARM
19368/PI/14}
\address[L. Garc\'ia-Lirola]{Universidad de Murcia, Facultad de Matem\'aticas, Departamento de Matem\'aticas, 
30100 Espinardo (Murcia), Spain}
\email{luiscarlos.garcia@um.es}

\author{Anton\'in Proch\'azka}
\address[A. Proch\'azka]{Universit\'e Bourgogne Franche-Comt\'e, Laboratoire de Math\'ematiques UMR 6623, 16 route de Gray,
25030 Besan\c con Cedex, France}
\email{antonin.prochazka@univ-fcomte.fr}

\author{ Abraham Rueda Zoca }\thanks{The research of A. Rueda Zoca was supported by a research grant Contratos predoctorales FPU del Plan Propio del Vicerrectorado de Investigaci\'on y Transferencia de la Universidad de Granada, by MINECO (Spain) Grant MTM2015-65020-P and by Junta de Andaluc\'ia Grants FQM-0185.}
\address[A. Rueda Zoca]{Universidad de Granada, Facultad de Ciencias.
Departamento de An\'{a}lisis Matem\'{a}tico, 18071-Granada
(Spain)} \email{ abrahamrueda@ugr.es}
\urladdr{\url{https://arzenglish.wordpress.com}}

\keywords{Daugavet property; space of Lipschitz functions: Lipschitz-free space; length space; strongly exposed point}

\subjclass[2010]{Primary 46B20; Secondary 54E50}

\begin{abstract}
We study the Daugavet property in the space of Lipschitz functions $\Lip(M)$ for a complete metric space $M$.
Namely we show that $\Lip(M)$ has the Daugavet property if and only if $M$ is a length space. 
This condition also characterises the Daugavet property in the Lipschitz free space $\mathcal{F}(M)$.
Moreover, when $M$ is compact, we show that either $\mathcal{F}(M)$ has the Daugavet property or its unit ball has a strongly exposed point. 
If  $M$ is an infinite compact subset of a strictly convex Banach space then the Daugavet property of $\Lip(M)$ is equivalent to the convexity of $M$.

\end{abstract}

\maketitle \markboth{L. GARC\'IA-LIROLA, A. PROCH\'AZKA AND A. RUEDA ZOCA}{A CHARACTERISATION OF THE DAUGAVET PROPERTY ...}

\section{Introduction}

A Banach space $X$ is said to have the Daugavet property if every rank-one operator $T:X\longrightarrow X$ satisfies the equality
\begin{equation}\label{ecuadauga}
\Vert T+I\Vert=1+\Vert T\Vert,
\end{equation}
where $I$ denotes the identity operator. The previous equality is known as \emph{Daugavet equation} because I.~Daugavet proved in \cite{dau} that every compact operator on $\mathcal C([0,1])$ satisfies (\ref{ecuadauga}). Since then, many examples of Banach spaces enjoying the Daugavet property have appeared. E.g. $\mathcal C(K)$ for a perfect compact Hausdorff space $K$; $L_1(\mu)$ and $L_\infty(\mu)$ for a non-atomic measure $\mu$; or preduals of Banach spaces with the Daugavet property (see \cite{kkw,kssw,wer} and references therein for a detailed treatment of the Daugavet property).

In \cite[Section 6]{wer} it is asked whether the space $\Lip([0,1]^2)$ of Lipschitz functions over the unit square enjoys or not the Daugavet property. A positive answer was given in \cite{ikw}, where it was shown, among other results, that $\Lip(M)$ has the Daugavet property whenever $M$ is a length metric space.

Here we prove the converse implication, thus obtaining our main theorem (Theorem~\ref{caracolocal}) which completely characterises those complete metric spaces $M$ such that $\Lip(M)$ has the Daugavet property. 
As a consequence of Theorem~\ref{caracolocal} we also get that the space $\Lip(M)$ has the Daugavet property if, and only if, its canonical predual $\Free(M)$ (see the formal definition below) has the Daugavet property, extending the corresponding result in the compact case which was proved in \cite{ikw}.

This paper is organised as follows. 
In Section \ref{sec:metric} we introduce necessary definitions and establish several results concerning length and geodesic metric spaces, in particular we show that a complete local space is a length space. We also study sufficient conditions for a metric space to be geodesic. 
Section \ref{sec:daugavet} is devoted to the proof of the main theorem, the charaterisation of Lipschitz free spaces and spaces of Lipschitz functions with the Daugavet property. 
Section \ref{sec:exposed} includes a characterisation of strongly exposed points in $\closedball{\Free(M)}$ (Theorem~\ref{th:charstrexp}). 
We use this result to prove in Corollary \ref{caracompa} that, when $M$ is compact, the Daugavet property of $\Free(M)$ is equivalent to the absence of strongly exposed points of $\closedball{\Free(M)}$. 
It is not clear whether the absence of strongly exposed points of $\closedball{\Free(M)}$ implies in general that $M$ is a length space. 
In the first part of Section~\ref{sec:remarks} we gather some partial evidence to support such a conjecture. 
In the second part of Section~\ref{sec:remarks} we study the Daugavet property in the spaces of vector-valued functions $\Lip(M,X)$. 
This is used to give new examples of spaces of linear bounded operators and of projective tensor products enjoying the Daugavet property.

\textbf{Notation:} Throughout the paper we will only consider real Banach spaces. Given a Banach space $X$, we will denote the closed unit ball and the unit sphere of $X$ by $B_X$ and $S_X$ respectively. We will also denote by $X^*$ the topological dual of $X$. 

By a \textit{slice} of the unit ball $B_X$ of a Banach space $X$ we will mean a set of the following form
\[ S(B_X,f,\alpha):=\{x\in B_X:f(x)>1-\varepsilon\}\]
where $f\in S_{X^*}$ and $\alpha>0$. Notice that slices are non-empty relatively weakly open and convex subsets of $B_X$ whose complement is also convex. 

Given a metric space $M$ and a point $x\in M$, we will denote by $B(x,r)$ the closed unit ball centered at $x$ with radius $r$. 
Let $M$ be a metric space with a distinguished point $0 \in M$.
The couple $(M,0)$ is commonly called a \emph{pointed metric space}.
By an abuse of language we will say only ``let $M$ be a pointed metric space'' and similar sentences.
The vector space of Lipschitz functions from $M$ to $\Real$ will be denoted by $\justLip(M)$.
Given a Lipschitz function $f\in \justLip(M)$, we denote its Lipschitz constant by
\[ \norm{f}_{L} = \sup\set{ \frac{|f(x)-f(y)|}{d(x,y)} : x,y\in M, x\neq y}. \]
This is a seminorm on $\justLip(M)$ which is clearly a Banach space norm on the space $\Lip(M)\subset \justLip(M)$ of Lipschitz functions on $M$ vanishing at $0$. 
It is well-known that $\Lip(M)$ is a dual Banach space, whose canonical predual is the \emph{Lipschitz free space} 
\[ \mathcal F(M) : = \overline{\vspan}\{\delta_x : x\in M\}\subset \Lip(M)^* \]
where $\delta_x(f):= f(x)$ for every $x\in M$ and $f\in \justLip(M)$ (see~\cite{GK,wea}, or~\cite{cdw} for the most elementary proof of this fact).
If $N$ is a dense subset of $M$ then $\mathcal F(N)$ and $\mathcal F(M)$ are isometrically isomorphic Banach spaces as every Lipschitz function on $N$ extends uniquely to a Lipschitz function on $M$ with the same Lipschitz constant. 
Thus the results about $\Free(M)$ or $\Lip(M)$ can be stated for \emph{complete} $M$ without any loss of generality.

We finally recall two geometric characterisations of the Daugavet property in terms of the slices of the unit ball. We refer the reader to \cite{kssw,wer} for a detailed proof.

\begin{theorem}\label{caragendauga}
Let $X$ be a Banach space. The following assertions are equivalent:
\begin{enumerate}
\item\label{caragendauga1} $X$ has the Daugavet property.
\item\label{caragendauga2} For every $x\in S_X$, every slice $S$ of $B_X$ and every $\varepsilon>0$ there exists another slice $T$ of the unit ball such that $T\subseteq S$ and such that
$$\Vert x+y\Vert>2-\varepsilon$$
holds for every $y\in T$.
\item\label{caragendauga3} For every $x\in S_X$ and every $\varepsilon>0$ the following equality holds:
$$B_X=\clco(\{y\in (1+\varepsilon) B_X: \Vert y-x\Vert>2-\varepsilon\} ).$$
\end{enumerate}
\end{theorem}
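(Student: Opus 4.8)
The plan is to route everything through an intermediate ``local'' reformulation. Call \emph{(L)} the statement: for every $x\in S_X$, every $\varepsilon>0$ and every slice $S$ of $B_X$ there is $y\in S$ with $\norm{x+y}>2-\varepsilon$. Condition \eqref{caragendauga2} obviously implies (L) (take any point of the sub-slice $T$), so it suffices to prove the chain \eqref{caragendauga1}$\Leftrightarrow$(L)$\Leftrightarrow$\eqref{caragendauga3} together with the implication (L)$\Rightarrow$\eqref{caragendauga2}.

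For \eqref{caragendauga1}$\Rightarrow$(L): by \eqref{caragendauga1} every norm-one rank-one operator $\varphi\otimes v$, where $\varphi\in S_{X^*}$, $v\in S_X$ and $(\varphi\otimes v)(z)=\varphi(z)\,v$, satisfies $\norm{I+\varphi\otimes v}=2$, i.e.\ $\sup_{z\in B_X}\norm{z+\varphi(z)v}=2$. Given $\delta>0$ pick $z\in B_X$ with $\norm{z+\varphi(z)v}>2-\delta$; since $\norm{z+\varphi(z)v}\le 1+\abs{\varphi(z)}$ this forces $\abs{\varphi(z)}>1-\delta$, and replacing $z$ by $-z$ if necessary we may assume $\varphi(z)>1-\delta$, whence $\norm{z+v}\ge\norm{z+\varphi(z)v}-(1-\varphi(z))>2-2\delta$. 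Applying this to the functional $f$ defining a given slice $S=S(B_X,f,\alpha)$ and to $\delta=\min\set{\alpha,\varepsilon/2}$ produces a point of $S$ as required by (L). Conversely, for (L)$\Rightarrow$\eqref{caragendauga1} let $T$ be rank-one; writing $T=\lambda(\varphi\otimes v)$ with $\lambda=\norm{T}>0$, $\varphi\in S_{X^*}$, $v\in S_X$ (the case $\norm{T}=0$ being trivial), fix $\varepsilon>0$ and apply (L) to $x=v$ and the slice $S(B_X,\varphi,\varepsilon)$ to obtain $y\in B_X$ with $\varphi(y)>1-\varepsilon$ and $\norm{v+y}>2-\varepsilon$; picking $\psi\in S_{X^*}$ with $\psi(v+y)>2-\varepsilon$, so that $\psi(v),\psi(y)>1-\varepsilon$, we get
\[
\norm{I+T}\ \ge\ \psi\bigl((I+T)y\bigr)\ =\ \psi(y)+\lambda\,\varphi(y)\,\psi(v)\ >\ (1-\varepsilon)\bigl(1+\lambda(1-\varepsilon)\bigr),
\]
and letting $\varepsilon\to 0$ yields $\norm{I+T}\ge 1+\norm{T}$, which is the Daugavet equation.

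The equivalence (L)$\Leftrightarrow$\eqref{caragendauga3} is a Hahn--Banach separation argument. Applying (L) to the point $-x$ shows that the convex set $\set{y\in B_X:\norm{y-x}>2-\varepsilon}$ meets every slice of $B_X$; but a closed convex subset of $B_X$ meeting every slice of $B_X$ must be all of $B_X$ (if a point of $B_X$ were omitted, separate it from the set by a functional in $S_{X^*}$ and the resulting slice would be disjoint from the set). Hence $B_X=\clco\set{y\in B_X:\norm{y-x}>2-\varepsilon}$, which is the content of \eqref{caragendauga3}. For the converse, given a slice $S=S(B_X,f,\alpha)$ and $x\in S_X$, use \eqref{caragendauga3} to express a point of $S$ as an approximate convex combination of elements $w_i$ of $\set{w\in(1+\varepsilon')B_X:\norm{w-x}>2-\varepsilon'}$; since the weighted average of the $f(w_i)$ is close to $1$, some $w_{i_0}$ has $f(w_{i_0})$ close to $1$, and rescaling $w_{i_0}$ into $B_X$ keeps it inside $S$ and at distance $>2-\varepsilon$ from $-x$, which gives (L) (all closeness thresholds are met by choosing $\varepsilon'$ small relative to $\alpha$ and $\varepsilon$).

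Finally, (L)$\Rightarrow$\eqref{caragendauga2}, the only delicate step. Fix $x\in S_X$, $\varepsilon>0$ and a slice $S=S(B_X,f,\alpha)$, and let $V=\set{y\in B_X:\norm{x+y}\le 2-\varepsilon}$, a closed convex set. If the closed convex set $K:=\clco\bigl(V\cup(B_X\setminus S)\bigr)$ is not all of $B_X$, then separating an omitted point from $K$ produces a slice $T$ with $T\cap K=\varnothing$; such a $T$ satisfies $T\subseteq S$ and $\norm{x+y}>2-\varepsilon$ for every $y\in T$, which is precisely \eqref{caragendauga2}. So the point is to rule out $K=B_X$. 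Assuming $K=B_X$, pick $\alpha'\in(0,\alpha)$ and $\varepsilon'\in(0,\varepsilon)$ small, apply (L) to the slice $S(B_X,f,\alpha')$ and the point $x$ to get $y_1$ with $f(y_1)>1-\alpha'$ and $\norm{x+y_1}>2-\varepsilon'$, and write $y_1$ as an approximate convex combination $\sum_i\gamma_i w_i$ with each $w_i\in V\cup(B_X\setminus S)$. Let $p$ be the total weight carried by points of $V$ and $q$ the total weight carried by points of $B_X\setminus S$, so $p+q\approx 1$. Estimating the norm gives $\norm{x+y_1}\lesssim p(2-\varepsilon)+2q$, forcing $p$ small; estimating $f$ gives $f(y_1)\lesssim p+q(1-\alpha)$, forcing $q$ small; for $\alpha'$, $\varepsilon'$ and the approximation error small enough this contradicts $p+q\approx 1$. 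The heart of the theorem is this last passage, from ``every sub-slice of $S$ contains a point far from $-x$'' to ``some sub-slice of $S$ consists entirely of points far from $-x$'', carried out here by separation plus a weighted-convex-combination estimate (this is the role played by the geometric-series construction in \cite{kssw}).
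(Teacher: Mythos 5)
Your proof is correct. The paper itself does not prove this theorem (it defers to \cite{kssw,wer}), so the comparison is with the standard arguments there. Your overall architecture --- routing everything through the single-point condition (L) --- is exactly the classical one, and your treatments of \eqref{caragendauga1}$\Leftrightarrow$(L) and of (L)$\Leftrightarrow$\eqref{caragendauga3} are the standard rank-one-operator and Hahn--Banach separation arguments. The one place where you genuinely diverge is (L)$\Rightarrow$\eqref{caragendauga2}: the usual proof (cf.~\cite{kssw}) produces the sub-slice $T$ constructively, by choosing $y_0\in S(B_X,f,\alpha')$ with $\norm{x+y_0}>2-\varepsilon'$, picking $\psi\in S_{X^*}$ nearly norming $x+y_0$, and taking $T$ to be a small slice determined by the normalised sum $(f+\psi)/\norm{f+\psi}$; you instead argue non-constructively, separating from $\clco\bigl(V\cup(B_X\setminus S)\bigr)$ and ruling out the degenerate case $K=B_X$ by playing the weight estimates $p<(\varepsilon'+\eta)/\varepsilon$ and $q<(\alpha'+\eta)/\alpha$ against $p+q=1$. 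Both are valid; the averaging argument yields an explicit exposing functional, while yours is quicker to justify rigorously and isolates the convexity mechanism cleanly. Two cosmetic points: the set $\set{y\in B_X:\norm{y-x}>2-\varepsilon}$ is not itself convex (its complement in $B_X$ is), so ``the convex set'' should read ``the closed convex hull of the set'' --- your argument only ever uses the hull, so nothing breaks; and the equality in \eqref{caragendauga3}, taken literally with $(1+\varepsilon)B_X$, can only mean the inclusion $B_X\subseteq\clco(\cdots)$ (the hull may leave $B_X$), and you in fact prove the stronger statement with $B_X$ in place of $(1+\varepsilon)B_X$, which is the direction that is ever used.
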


Note that (\ref{caragendauga3}) is particularly useful in those Banach spaces in which there is not a complete description of the dual space.

\section{Length spaces and geodesic spaces}\label{sec:metric}

\begin{definition}
 We will say that a metric space $(M,d)$ is a \emph{length space} if, for every pair of points $x,y \in M$, the distance $d(x,y)$ is equal to the infimum of the length of rectifiable curves joining them. Moreover, if that infimum is always attained then we will say that $M$ is a \emph{geodesic space}. 
\end{definition}

These definitions are standard, for more details see e.g.~\cite{BH}. Geodesic spaces and length spaces were considered in~\cite{ikw}, where they are called metrically convex spaces and almost metrically convex spaces, respectively. 

The following lemma is well-known and easy to prove, see~\cite{BBI}.

\begin{lemma}\label{lemma:charlength}
 Let $(M,d)$ be a complete metric space. Then
\begin{enumerate}[(a)]
\item $M$ is a geodesic space if and only if for every $x,y\in M$ there is $z\in M$ such that $d(x,z)=d(y,z)=\frac{1}{2}d(x,y)$. 
\item $M$ is a length space if and only if 
 for every $x,y \in M$ and for every $\delta>0$ the set 
\[
\Mid(x,y,\delta):=B\left (x,\frac{1+\delta}{2}d(x,y)\right ) \cap B\left (y,\frac{1+\delta}{2}d(x,y)\right )  
\]
is non-empty.
\end{enumerate}
\end{lemma}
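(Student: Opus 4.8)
All four implications rest on the same construction: an (approximate-)midpoint hypothesis allows one to build, by repeated dyadic subdivision, a Lipschitz map from the dyadic rationals of $[0,1]$ into $M$ whose length is under control, and completeness of $M$ then turns it into a genuine rectifiable curve joining the two given points.

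For part (a), the implication ``geodesic $\Rightarrow$ midpoint property'' is immediate: reparametrise a shortest curve from $x$ to $y$ by arc length on $[0,d(x,y)]$ and take its value $z$ at $\tfrac12 d(x,y)$; each half has length $\tfrac12 d(x,y)$, so the triangle inequality forces $d(x,z)=d(z,y)=\tfrac12 d(x,y)$. For the converse I would put $\gamma(0)=x$, $\gamma(1)=y$, choose $\gamma(\tfrac12)$ to be a midpoint of $x$ and $y$, and recursively choose $\gamma\!\left(\tfrac{2j+1}{2^{n+1}}\right)$ to be a midpoint of $\gamma\!\left(\tfrac{j}{2^{n}}\right)$ and $\gamma\!\left(\tfrac{j+1}{2^{n}}\right)$. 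An induction gives $d\!\left(\gamma(\tfrac{j}{2^n}),\gamma(\tfrac{j+1}{2^n})\right)=2^{-n}d(x,y)$ for all admissible $n,j$, hence $d(\gamma(s),\gamma(t))\le|s-t|\,d(x,y)$ for dyadic $s,t$ by summing along a chain, and a matching lower bound from $d(x,y)\le d(x,\gamma(s))+d(\gamma(s),\gamma(t))+d(\gamma(t),y)$ gives $d(\gamma(s),\gamma(t))=|s-t|\,d(x,y)$ for all dyadic $s,t\in[0,1]$. Thus $\gamma$ is Lipschitz on a dense subset of $[0,1]$, and completeness of $M$ extends it to $\gamma\colon[0,1]\to M$ satisfying the same isometric identity; this is a geodesic from $x$ to $y$ of length $d(x,y)$, so $M$ is geodesic.

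For part (b), the direct implication is easy: given $x\neq y$ and $\delta>0$, pick a rectifiable curve from $x$ to $y$ of length strictly less than $(1+\delta)d(x,y)$, reparametrise it by arc length, and let $z$ be its value at half the total length; then $d(x,z)$ and $d(z,y)$ are each $<\tfrac{1+\delta}{2}d(x,y)$, so $z\in\Mid(x,y,\delta)$ (and the case $x=y$ is trivial). The substantial direction is the converse. Here I would run the same dyadic subdivision as in (a), but at step $n$ insert a point of $\Mid$ computed with a \emph{gauge $\delta_n$ fixed in advance}, chosen so that $\prod_{n\ge1}(1+\delta_n)\le 1+\varepsilon$ (for instance $1+\delta_n=(1+\varepsilon)^{2^{-n}}$), and skip any segment whose endpoints already coincide. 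An induction then gives $d\!\left(\gamma(\tfrac{j}{2^n}),\gamma(\tfrac{j+1}{2^n})\right)\le 2^{-n}\prod_{k\le n}(1+\delta_k)\,d(x,y)\le 2^{-n}(1+\varepsilon)d(x,y)$, whence $d(\gamma(s),\gamma(t))\le(1+\varepsilon)\,d(x,y)\,|s-t|$ for all dyadic $s,t\in[0,1]$; completeness extends $\gamma$ to a curve $[0,1]\to M$ with the same Lipschitz constant, and hence of length at most $(1+\varepsilon)d(x,y)$. Letting $\varepsilon\to0$ shows that $d(x,y)$ equals the infimum of the lengths of curves joining $x$ and $y$, i.e. $M$ is a length space.

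The one point that needs care is precisely this choice of gauges: a \emph{fixed} $\delta$ would multiply the total length of the subdivided chain by $(1+\delta)$ at every level, so it would diverge; the $\delta_n$ must be summable from the outset, after which the uniform Lipschitz bound on the dyadic points makes passing to the completion and estimating the length of the limit curve routine.
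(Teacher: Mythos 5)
Your proof is correct and complete. The paper itself offers no proof of this lemma, delegating it to \cite{BBI}, and your dyadic-subdivision argument — exact midpoints for (a), approximate midpoints with gauges $\delta_n$ chosen in advance so that $\prod_{n\ge 1}(1+\delta_n)\le 1+\varepsilon$ for (b), followed by extension from the dyadic rationals by completeness — is precisely the standard argument of that reference, and the one genuinely delicate point (that a fixed $\delta$ at every level would make the lengths diverge, so the gauges must be summable from the outset) is exactly the one you identify and handle.
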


The next definition comes from~\cite{ikw}.

\begin{definition}
A metric space $M$ is said to be \emph{local} if, for every $\varepsilon>0$ and every Lipschitz function $f\colon M\to\mathbb R$ there exist $u\neq v\in M$ such that $d(u,v)<\varepsilon$ and $\frac{f(u)-f(v)}{d(u,v)}>\Vert f\Vert_L-\varepsilon$. 

Moreover, $M$ is said to be \emph{spreadingly local} if for every $\varepsilon>0$ and every Lipschitz function $f\colon M\to \mathbb R$ the set
\[ \left\{x\in M : \inf_{\delta>0} \norm{f\restricted_{B(x,\delta)}}_{L}>\norm{f}_{L}-\varepsilon\right\}\]
is infinite. 
\end{definition}

It has been proved in~\cite{ikw} that length spaces are spreadingly local and that locality implies spreading locality under compactness assumptions.
But in fact we have the equivalence of the three concepts in general.
\begin{proposition}\label{prop:charlengthspace} 
Let $M$ be a complete 
metric space. The following are equivalent:
\begin{enumerate}[(i)] 
\item\label{it:len1} $M$ is a length space.
\item\label{it:len2} $M$ is spreadingly local.
\item\label{it:len3} $M$ is local.
\end{enumerate}
\end{proposition}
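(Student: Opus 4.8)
The plan is to prove the cycle of implications $(\ref{it:len1}) \Rightarrow (\ref{it:len2}) \Rightarrow (\ref{it:len3}) \Rightarrow (\ref{it:len1})$. The first implication is the one already attributed to \cite{ikw}, so I would either cite it or reprove it quickly: given a Lipschitz $f$ and $\varepsilon>0$, pick $x_0,y_0$ with $\frac{f(x_0)-f(y_0)}{d(x_0,y_0)}$ close to $\norm{f}_L$; using that $M$ is a length space, connect $x_0$ to $y_0$ by a curve of length close to $d(x_0,y_0)$, and observe that along a sufficiently fine partition of this curve the average slope of $f$ must be nearly maximal on many consecutive small sub-arcs, which forces infinitely many points $x$ where $f$ has near-maximal local Lipschitz constant on every neighbourhood; hence $M$ is spreadingly local. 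The implication $(\ref{it:len2}) \Rightarrow (\ref{it:len3})$ is essentially trivial: if the displayed set is infinite (in particular non-empty) then in particular there is at least one such $x$, and then within any ball $B(x,\delta)$ we find $u \neq v$ with slope exceeding $\norm{f}_L - \varepsilon$ and $d(u,v) < \varepsilon$ once $\delta$ is small enough; so locality holds.

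The substantive part, and the main obstacle, is $(\ref{it:len3}) \Rightarrow (\ref{it:len1})$: showing that a complete local space is a length space. By Lemma~\ref{lemma:charlength}(b) it suffices to show that $\Mid(x,y,\delta)$ is non-empty for all $x,y$ and all $\delta > 0$. The natural strategy is contrapositive: suppose $\Mid(x,y,\delta) = \emptyset$ for some pair $x \neq y$ and some $\delta > 0$, and manufacture a Lipschitz function $f$ witnessing the failure of locality, i.e. one whose Lipschitz constant is not approached by slopes over arbitrarily close pairs of points. The candidate is built from the distance function: something like $f(z) = \min\{d(x,z), \tfrac{1+\delta}{2}d(x,y)\}$, or a suitable truncation/combination of $d(x,\cdot)$ and $-d(y,\cdot)$, designed so that the "large slope" behaviour of $f$ is confined to pairs of points that are forced to be far apart — precisely because no midpoint-like point exists. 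One would partition $M$ into the region near $x$, the region near $y$, and check that on each region the local slopes of $f$ are bounded away from $\norm{f}_L$, while realizing $\norm{f}_L$ genuinely requires jumping between the two regions, a jump of definite size. The emptiness of $\Mid(x,y,\delta)$ is exactly what prevents the slope from being realized locally.

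The delicate point will be the bookkeeping of constants: one must choose the truncation level and the target locality-violation parameter $\varepsilon$ carefully so that (a) $f$ is genuinely Lipschitz with the intended constant, (b) any pair $u,v$ with slope of $f$ exceeding $\norm{f}_L - \varepsilon$ must have $u$ "on the $x$ side" and $v$ "on the $y$ side" (or vice versa), and (c) such a pair then necessarily satisfies $d(u,v) \geq$ some fixed positive quantity depending on $\delta$ and $d(x,y)$ but not on $\varepsilon$. Step (b) is where the hypothesis $\Mid(x,y,\delta) = \emptyset$ does the real work: if $u$ and $v$ were both, say, within distance $\tfrac{1+\delta}{2}d(x,y)$ of $x$, the truncation would flatten $f$ and kill the slope; the only way to get a steep slope is to straddle the "gap", and the gap has width bounded below because the midpoint region is empty. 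Completeness of $M$ is used (as in Lemma~\ref{lemma:charlength}) to pass from the approximate statement about $\Mid(x,y,\delta)$ for all $\delta$ to the genuine length-space property. Once this implication is in hand, combined with the easy ones, the three notions coincide.
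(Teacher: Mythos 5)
Your plan is correct and follows essentially the same route as the paper: the implication \ref{it:len3}$\Rightarrow$\ref{it:len1} is proved there by exactly your contrapositive scheme, taking $f=f_1+f_2$ with $f_1(t)=\max\{r-\frac{1}{1+\delta}d(x,t),0\}$ and $f_2(t)=\min\{-r+\frac{1}{1+\delta}d(y,t),0\}$, so that each summand has Lipschitz constant only $\frac{1}{1+\delta}$ and is supported in $B(x,(1+\delta)r)$, resp.\ $B(y,(1+\delta)r)$, while $f(x)-f(y)=d(x,y)$ forces $\norm{f}_L\geq 1$; hence any pair with slope above $\frac{1}{1+\delta}$ must straddle the two balls, which are at distance $\geq\delta r$ when $\Mid(x,y,2\delta)=\emptyset$. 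Note that your first candidate, a single truncated cone $\min\{d(x,z),\cdot\}$, would not work, since its Lipschitz constant is attained locally near $x$; it is your second alternative, the two-cone combination with slope strictly less than $1$, that realises the mechanism you describe.
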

\begin{proof}
\ref{it:len2}$\Rightarrow$\ref{it:len3} is trivial and 
\ref{it:len1}$\Rightarrow$\ref{it:len2} was proved in~\cite{ikw}, see the remark after Proposition~2.3.
For the reader's convenience we sketch the main idea. 
For a given $f \in \closedball{\Lip(M)}$ and $\varepsilon>0$ let $x,y \in M$ be such that $f(x)-f(y)\geq (1-\frac{\varepsilon^2}{4}) d(x,y)$.
Let $\varphi:[0,d(x,y)(1+\frac{\varepsilon}2)] \to M$ be a 1-Lipschitz map such that $\varphi(0)=x$ and $\varphi(d(x,y)(1+\frac\varepsilon2)=y$.
Then $f(y)=f(x)+\int_0^{d(x,y)(1+\varepsilon/2)} (f\circ \varphi)'(t)\, dt$ and the integrand has to be larger than $1-\frac\varepsilon2$ in a non-negligible subset $A$ of $[0,(1+\varepsilon/2)d(x,y)]$.
It is immediate to check $\varphi(A)$ satisfies the definition of spreading locality for $\varepsilon$.

To show that \ref{it:len3}$\Rightarrow$\ref{it:len1}, assume that $M$ is not a length space. 
Then there exist $x,y \in M$ and $\delta>0$ such that $\Mid(x,y,2\delta)=\emptyset$. 
Let us denote $r:=\frac{d(x,y)}2$.
Notice by passing that \[\dist(B(x,(1+\delta)r),B(y,(1+\delta)r))\geq \delta r.\]
Let $f_i\colon M \to \Real$ be defined by 
\[
f_1(t)=\max\set{r-\frac1{1+\delta}d(x,t),0} \mbox{ and }
f_2(t)=\min\set{-r+\frac1{1+\delta}d(y,t),0}.
\]
Clearly $\norm{f_i}_{L}\leq \frac1{1+\delta}$ so $f=f_1+f_2$ is a Lipschitz function. 
Since $f(x)-f(y)=d(x,y)$ we have that $\norm{f}_{L}\geq 1$. 
Moreover we have that $\set{z:f_1(z)\neq 0} \subset B(x,(1+\delta)r)$ and 
$\set{z:f_2(z)\neq 0} \subset B(y,(1+\delta)r)$.
It follows that if $\frac{f(u)-f(v)}{d(u,v)}>\frac{1}{1+\delta}$ then $u \in B(x,(1+\delta)r)$ and $v \in B(y,(1+\delta)r)$. But then $d(u,v) \geq \delta r$ and so $M$ is not local. This shows that~\ref{it:len3}$\Rightarrow$\ref{it:len1}.
\end{proof}

It is clear from Lemma~\ref{lemma:charlength} that every compact length space is geodesic. 
But the compactness is not always needed for this implication to hold. 
Indeed, in some particular cases, being a length space automatically implies being a geodesic space. 
For instance, this is the case for weak*-closed length subsets of dual Banach spaces. In what follows we wish to study geometric properties of a Banach space $X$ that ensure that  every complete length subset is geodesic. 
Let us recall that the Kuratowski index of non-compactness of a set $D\subset X$ is given by
\[ \alpha(D) = \inf\left \{\varepsilon>0 : \exists x_1,\dotsc, x_n \in X, D\subset \bigcup_{i=1}^n B(x_i, \varepsilon)\right \}. \]

\begin{proposition}\label{prop:lenghtvsgeodesic} Assume that $\lim_{\delta\to 0}\alpha(\Mid(x,-x,\delta))=0$ for every $x\in S_X$. Let $M$ be a complete subset of $X$. Then if $M$ is a length space, it is a geodesic space. 
\end{proposition}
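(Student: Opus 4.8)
The plan is to show that for any $x,y\in M$ there is a midpoint $z\in M$ with $d(x,z)=d(y,z)=\frac12 d(x,y)$; the conclusion then follows from Lemma~\ref{lemma:charlength}(a) since $M$ is complete. Fix $x\neq y$ in $M$, put $r=\frac12 d(x,y)$, and for each $\delta>0$ consider $\Mid(x,y,\delta)=B(x,(1+\delta)r)\cap B(y,(1+\delta)r)\subset M$, which is non-empty because $M$ is a length space (Lemma~\ref{lemma:charlength}(b)). First I would renormalise: after translating so that the segment $[x,y]$ has midpoint at the origin and rescaling, one is essentially looking at $\Mid(v,-v,\delta)$ for the unit vector $v=(x-y)/\norm{x-y}$, so the hypothesis $\lim_{\delta\to0}\alpha(\Mid(v,-v,\delta))=0$ applies. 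Hence the sets $\Mid(x,y,\delta)$ have Kuratowski index tending to $0$ as $\delta\to 0$.

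Next I would extract the midpoint by a compactness-of-the-intersection argument. The family $\{\,\overline{\Mid(x,y,\delta)} : \delta>0\,\}$ is a decreasing (as $\delta\downarrow0$) family of non-empty closed subsets of the complete metric space $M$, and $\alpha\bigl(\overline{\Mid(x,y,\delta)}\bigr)=\alpha\bigl(\Mid(x,y,\delta)\bigr)\to 0$. By Kuratowski's intersection theorem, $K:=\bigcap_{\delta>0}\overline{\Mid(x,y,\delta)}$ is a non-empty compact set. I claim any $z\in K$ is the desired midpoint: for every $\delta>0$ one has $z\in\overline{\Mid(x,y,\delta)}$, hence $d(x,z)\le(1+\delta)r$ and $d(y,z)\le(1+\delta)r$; letting $\delta\to0$ gives $d(x,z)\le r$ and $d(y,z)\le r$, and then the triangle inequality $d(x,y)\le d(x,z)+d(z,y)\le 2r=d(x,y)$ forces both to be equalities. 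Thus $d(x,z)=d(y,z)=r=\frac12 d(x,y)$.

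Finally, invoking Lemma~\ref{lemma:charlength}(a), the existence of such a $z$ for every pair $x,y$ together with completeness of $M$ shows $M$ is geodesic. The one point needing a little care — the main (minor) obstacle — is the reduction to the hypothesis: one must check that $\Mid(x,y,\delta)$, sitting inside $M\subset X$, is the image under the affine isometry $X\to X$, $u\mapsto \lambda^{-1}\bigl(u-\tfrac{x+y}{2}\bigr)$ with $\lambda=\norm{x-y}$, of a subset of $\Mid(v,-v,\delta)$ where $v=(x-y)/\norm{x-y}\in S_X$, and that Kuratowski's index is invariant under such affine isometries of the ambient space and monotone under inclusion; both are routine. (One should also note $\overline{\Mid(x,y,\delta)}$ closures are taken in $M$, which is legitimate since $M$ is complete, so these are complete metric spaces in their own right and Kuratowski's intersection theorem applies.)
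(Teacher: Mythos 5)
Your proof is correct and follows essentially the same route as the paper: both reduce to producing a metric midpoint via Lemma~\ref{lemma:charlength}(a) by exploiting that the nested sets $\Mid(x,y,\delta)$ have Kuratowski index tending to $0$, the only cosmetic difference being that you invoke Kuratowski's intersection theorem where the paper extracts a Cauchy subsequence from points $x_n\in\Mid(x,y,\tfrac1n)$ by hand (which is exactly how that theorem is proved). The only blemish is the normalisation constant (it should be $\lambda=\tfrac12\norm{x-y}$, and the rescaling is a similarity rather than an isometry, under which $\alpha$ scales by a constant), but this does not affect the argument.
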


\begin{proof}
Let $x,y \in M$ be given, by scaling and shifting we may assume that $x\in S_X$ and $y=-x$. Using Lemma~\ref{lemma:charlength} there is, for every $n\in \Natural$, a point $x_n \in \Mid(x,y,\frac1n)$.
It follows by our hypothesis and by $\Mid(x,y,\frac1{n+1})\subset \Mid(x,y,\frac1n)$ that $\lim_{n\to \infty}\alpha(\{x_k:k\geq n\})=0$.
Therefore for every $\varepsilon>0$ there is $N>0$ such that $\{x_n:n\geq N\}$ can be covered by finitely many balls of radius $\varepsilon$.
This suffices for selecting a Cauchy subsequence. Since $M$ is complete, we have that its limit $z$ belongs to $M$.
It is now clear that $d(x,z)\leq 1$ and $d(y,z)\leq 1$ hence $z$ is a metric midpoint between $x$ and $y$. Now Lemma~\ref{lemma:charlength} gives that $M$ is geodesic.
\end{proof}

The hypothesis of Proposition~\ref{prop:lenghtvsgeodesic} admits the following reformulation in terms of an asymptotic property of the Banach space $X$.

\begin{proposition} Let $x\in S_X$. The following are equivalent:
\begin{enumerate}[(i)]
\item\label{caraamuc1} $\lim_{\delta\to 0} \alpha(\Mid(x,-x,\delta))=0$.
\item\label{caraamuc2} For every $0<t<1$ there is $\delta>0$ and a finite codimensional subspace $Y\subset X$ such that 
\[ \inf_{y\in S_Y} \max\{||x+ty||,||x-ty||\}> 1+\delta \]
\end{enumerate}
\end{proposition}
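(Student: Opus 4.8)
The plan is to work throughout with the lens $M_\delta:=\Mid(x,-x,\delta)$. Since $\norm x=1$ we have $d(x,-x)=2$, so $M_\delta=\set{z\in X:\norm{z-x}\le 1+\delta,\ \norm{z+x}\le 1+\delta}$ is a closed, convex, bounded set (in fact $M_\delta\subset(1+\delta)B_X$) which decreases as $\delta\to0$. Recalling that a bounded set $A$ satisfies $\alpha(A)=0$ precisely when it is totally bounded, and that $\delta\mapsto\alpha(M_\delta)$ is non-decreasing, condition \ref{caraamuc1} is equivalent to: for every $\varepsilon>0$ there is $\delta>0$ with $\alpha(M_\delta)<\varepsilon$. (Condition \ref{caraamuc2} is an asymptotic-uniform-convexity-type requirement on $X$ at the point $x$.) The finite-dimensional case being trivial, I assume $\dim X=\infty$.

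For \ref{caraamuc1}$\Rightarrow$\ref{caraamuc2} I would argue the contrapositive. If \ref{caraamuc2} fails, there is a \emph{fixed} $t\in(0,1)$ such that $\inf_{y\in S_Y}\max\{\norm{x+ty},\norm{x-ty}\}=1$ for every finite-codimensional $Y\subset X$ (the infimum is always $\ge\norm x=1$, and by the failure of \ref{caraamuc2} it is $\le1+\delta$ for every $\delta>0$). The key observation is that $\max\{\norm{x+ty},\norm{x-ty}\}<1+\eta$ forces $ty\in M_\eta$, because $\norm{ty\pm x}=\norm{x\mp ty}$. So, fixing $\eta>0$, a standard Riesz/Hahn--Banach biorthogonal construction produces $y_k\in S_X$ and $f_k\in S_{X^*}$ with $y_k\in\bigcap_{i<k}\ker f_i$, with $f_k(y_k)>1-\rho$, and with $\max\{\norm{x\pm ty_k}\}<1+\eta$; then $\norm{y_i-y_j}\ge f_i(y_i)>1-\rho$ for $i\ne j$, so $\set{ty_k:k\ge1}$ is an infinite $t(1-\rho)$-separated subset of $M_\eta$, whence $\alpha(M_\eta)\ge t(1-\rho)/2$. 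Letting $\rho\to0$ gives $\alpha(M_\eta)\ge t/2>0$ for every $\eta>0$, so \ref{caraamuc1} fails.

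For \ref{caraamuc2}$\Rightarrow$\ref{caraamuc1}, fix $\varepsilon>0$, pick $t\in(0,1)$ small, and let $\sigma>0$ and the finite-codimensional subspace $Y$ be as furnished by \ref{caraamuc2}. The crucial geometric step is the claim that \emph{if $0<\delta'<\sigma$ and $z,z'\in M_{\delta'}$ with $z-z'\in Y$, then $\norm{z-z'}<2t$}. Indeed, writing $h=\norm{z-z'}$ and $y=(z-z')/h\in S_Y$, the identity $2(x+\tfrac h2y)=(z+x)-(z'-x)$ together with the four inequalities defining $z,z'\in M_{\delta'}$ gives $\norm{x+\tfrac h2y}\le1+\delta'$, and symmetrically $\norm{x-\tfrac h2y}\le1+\delta'$; if we had $h\ge2t$, then writing $x\pm ty=(1-\tfrac{2t}{h})x+\tfrac{2t}{h}(x\pm\tfrac h2y)$ would yield $\max\{\norm{x+ty},\norm{x-ty}\}\le1+\delta'<1+\sigma$, contradicting the defining inequality of $Y$. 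Thus cosets of $Y$ meet $M_{\delta'}$ in sets of diameter $<2t$, and this holds \emph{uniformly} for all $\delta'<\sigma$; it is this uniform version, together with the convexity of the lenses, that makes the final step go through (it would fail for a general bounded set with small $Y$-slices).

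It remains to convert ``$Y$-slices of $M_\delta$ are small'' plus boundedness into ``$\alpha(M_\delta)$ is small''. I would fix $f_1,\dots,f_n\in S_{X^*}$ with $Y=\bigcap_i\ker f_i$ and a biorthogonal system $g_1,\dots,g_n\in X$ (so $f_i(g_j)=\delta_{ij}$), set $C=\sum_i\norm{g_i}$, and choose $\rho,\delta>0$ small enough that $\delta'=\delta+\rho C<\sigma$, $\rho C<\varepsilon/2$ and $2t<\varepsilon/2$. Since $M_\delta$ is bounded, $\set{(f_1(z),\dots,f_n(z)):z\in M_\delta}$ is totally bounded in $\Real^n$, so $M_\delta=\bigcup_{k=1}^N A_k$ where each $f_i$ oscillates by at most $\rho$ on $A_k$. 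Given $z,z'\in A_k$, the corrected point $\tilde z:=z+\sum_i(f_i(z')-f_i(z))g_i$ satisfies $\norm{\tilde z-z}\le\rho C$ — hence $\norm{\tilde z\pm x}\le\norm{z\pm x}+\rho C\le1+\delta'$, i.e. $\tilde z\in M_{\delta'}$ — and $\tilde z-z'\in Y$; the claim then gives $\norm{\tilde z-z'}<2t$, so $\diam A_k\le2t+\rho C<\varepsilon$ and therefore $\alpha(M_\delta)\le\max_k\diam A_k<\varepsilon$. I expect this last step to be the main obstacle: the correction $\tilde z$ need not lie in $M_\delta$ but only in the slightly larger lens $M_{\delta'}$, which is exactly why one needs the slack $\delta'<\sigma$ and the uniform (in $\delta'$) form of the geometric claim rather than arguing at a single value of $\delta$.
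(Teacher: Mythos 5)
Your proof is correct, and while the first implication matches the paper, your argument for \ref{caraamuc2}$\Rightarrow$\ref{caraamuc1} takes a genuinely different route. For \ref{caraamuc1}$\Rightarrow$\ref{caraamuc2} the paper likewise argues the contrapositive by inductively building a separated sequence in the lens (it only sketches this; your biorthogonal construction is the standard way to fill it in, and the observation $\norm{ty\pm x}=\norm{x\mp ty}$ placing $ty$ in the lens is exactly the point). For the converse, the paper instead cites Lemma~2.13 of \cite{JLPS}: since $\Mid(x,-x,\delta)$ is the ball of an equivalent norm, there is a finite-dimensional $Z$ with $\Mid(x,-x,\delta)\subset (Z\cap \Mid(x,-x,\delta))+3(Y\cap \Mid(x,-x,\delta))$; combined with $Y\cap\Mid(x,-x,\delta)\subset tB_X$ (the same convexity observation you use, applied to the segment from $0$ to a point of $Y\cap\Mid$) this gives $\alpha\leq 6t$ directly. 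Your replacement --- partitioning the lens by the values of finitely many functionals defining $Y$, correcting points within a cell into a common $Y$-coset via a biorthogonal system, and bounding the $Y$-fibres of the slightly enlarged lens $M_{\delta'}$ by the convex-combination identity $x\pm ty=(1-\tfrac{2t}{h})x+\tfrac{2t}{h}(x\pm\tfrac h2 y)$ --- is a correct, self-contained substitute for the JLPS decomposition. What each approach buys: the paper's is shorter but leans on an external structural lemma; yours is elementary and makes explicit where the slack $\delta'=\delta+\rho C<\sigma$ is needed (the corrected point $\tilde z$ leaves $M_\delta$), at the cost of a somewhat longer argument and a covering estimate by diameters rather than the clean $6t$ bound.
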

\begin{proof}
Follow the same arguments as in~\cite[Theorem 2.1]{DKRRZ16}. 
Let us sketch the main idea for reader's convenience.
If \ref{caraamuc2} fails, then for some $t>0$ and every $\delta>0$ it is easy to construct inductively a $t$-separated sequence in $\Mid(x,-x,\delta)$ showing that $\alpha(\Mid(x,-x,\delta))\geq t/2$.

Conversely, let $t>0$ be given and let $Y$ and $\delta>0$ be as in \ref{caraamuc2}. 
Since $\Mid(x,-x,\delta)$ is a ball of an equivalent norm on $X$, Lemma~2.13 of \cite{JLPS} shows that there is a finite dimensional $Z \subset X$ so that 
\[
\Mid(x,-x,\delta) \subset (Z \cap \Mid(x,-x,\delta)) + 3(Y \cap \Mid(x,-x,\delta)).
\]
Since we have for every $y \in \sphere{Y}$ that $ty \notin \Mid(x,-x,\delta)$, it follows by convexity that
$Y \cap \Mid(x,-x,\delta)\subset t\closedball{X}$.
Therefore $\alpha(\Mid(x,-x,\delta))\leq 6t$.
\end{proof}

In~\cite{DKRRZ16} the \emph{asymptotic midpoint uniformly convex} spaces (AMUC, for short) were introduced as those Banach spaces in which $\lim_{\delta\to 0}\alpha(\Mid(x,-x,\delta))=0$ uniformly in $x\in S_X$, or, in other words, the same $\delta>0$ works for all $x\in S_X$ in the condition \ref{caraamuc2} above. I.e. for every $0<t<1$ there is $\delta>0$ such that 
\[ \inf_{x\in S_X}\sup_{\dim X/Y <\infty}\inf_{y\in S_Y} \max\{||x+ty||,||x-ty||\}\geq 1+\delta. \]
In particular, every AUC space is also AMUC.

It is clear that if 
\begin{equation}\label{e:MLUR}
\lim_{\delta\to 0}\diam(\Mid(x,-x,\delta))=0 \mbox{ for every } x \in S_X
\end{equation}
then the hypothesis of Proposition~\ref{prop:lenghtvsgeodesic} is satisfied. 
The norms which satisfy \eqref{e:MLUR} are called \emph{midpoint locally uniformly rotund} (MLUR). 
For example, one can easily see that LUR norms are MLUR (see~\cite[Proposition 5.3.27]{Megginson}).

We are going to resume these comments into the following corollary.

\begin{corollary} 
A complete length subset $M$ of a Banach space $X$ is geodesic if any of the following conditions is satisfied:
\begin{itemize}
\item[a)] $X=Y^*$ for some Banach space $Y$ and $M$ is w$^*$-closed (in particular if $M$ is a compact)
\item[b)] $X$ is AMUC (in particular if $X$ is AUC, for example $X=\ell_p$, $1\leq p <\infty$)
\item[c)] $X$ is MLUR (in particular if $X$ is LUR).
\end{itemize}
\end{corollary}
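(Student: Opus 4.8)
The plan is to obtain (b) and (c) as immediate consequences of Proposition~\ref{prop:lenghtvsgeodesic}, by checking in each case that its hypothesis --- namely $\lim_{\delta\to 0}\alpha(\Mid(x,-x,\delta))=0$ for every $x\in S_X$ --- is fulfilled, and to establish (a) by a direct weak$^*$-compactness argument feeding into Lemma~\ref{lemma:charlength}(a).

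For (b): if $X$ is AMUC then, by the very definition recalled above, $\alpha(\Mid(x,-x,\delta))\to 0$ as $\delta\to 0$ (in fact uniformly in $x\in S_X$), so the hypothesis of Proposition~\ref{prop:lenghtvsgeodesic} holds and every complete length subset of $X$ is geodesic. The parenthetical cases follow since every AUC space is AMUC (noted above) and $\ell_p$ is AUC for $1\le p<\infty$. For (c): one records the elementary bound $\alpha(D)\le\diam(D)$, valid for every nonempty bounded $D\subset X$ because $D$ lies in a single ball of radius $\diam(D)$; hence MLUR, which says $\diam(\Mid(x,-x,\delta))\to 0$, forces $\alpha(\Mid(x,-x,\delta))\to 0$, and Proposition~\ref{prop:lenghtvsgeodesic} applies again. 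That LUR implies MLUR is \cite[Proposition~5.3.27]{Megginson}.

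For (a): suppose $X=Y^*$ and let $M$ be a weak$^*$-closed complete length subset; fix $x,y\in M$. By Lemma~\ref{lemma:charlength}(b), for each $n$ the set $\Mid(x,y,\frac1n)\subset M$ is nonempty, so the sets
\[ C_n:=\set{w\in M:\norm{w-x}\le\tfrac{1+1/n}{2}d(x,y)\ \text{and}\ \norm{w-y}\le\tfrac{1+1/n}{2}d(x,y)} \]
are nonempty; they are weak$^*$-closed (since $M$ is weak$^*$-closed and $\norm{\cdot}$ is weak$^*$-lower semicontinuous), bounded, hence weak$^*$-compact, and decreasing in $n$. Any $z\in\bigcap_n C_n$ satisfies $\norm{z-x}\le\frac12 d(x,y)$ and $\norm{z-y}\le\frac12 d(x,y)$, so by the triangle inequality both are equalities and $z$ is a metric midpoint of $x$ and $y$; Lemma~\ref{lemma:charlength}(a) then gives that $M$ is geodesic. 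For the ``in particular'' clause one notes that a norm-compact $M$ is weak$^*$-compact, hence weak$^*$-closed, when $X$ is viewed inside the dual space $\ell_\infty(B_{X^*})$ via the canonical isometric embedding, geodesy being an intrinsic metric property; alternatively, for compact $M$ the sets $\Mid(x,y,\frac1n)$ already form a decreasing sequence of nonempty compacta with nonempty intersection.

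I do not expect a genuine obstacle: once Proposition~\ref{prop:lenghtvsgeodesic} is in hand the corollary is largely bookkeeping. The only points deserving attention are, in (a), that it is $M$ itself --- and not merely a ball of $X$ --- that must be weak$^*$-closed, so that the limiting midpoint returns to $M$, together with the small reduction of the compact case to the dual-space setting; and, in (c), making the trivial but easy-to-overlook inequality $\alpha(D)\le\diam(D)$ explicit.
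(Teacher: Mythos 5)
Your proposal is correct and follows essentially the same route as the paper, which presents this corollary merely as a summary of the preceding comments: (b) and (c) are exactly the observations that AMUC, respectively MLUR via $\alpha(D)\le\diam(D)$, guarantee the hypothesis of Proposition~\ref{prop:lenghtvsgeodesic}, and (a) is the direct weak$^*$-compactness argument behind the remark that weak$^*$-closed length subsets of dual spaces (in particular compact length spaces) are geodesic. Your filling in of the details, including the reduction of the compact case to nested nonempty compacta, matches the intended argument.
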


To conclude this section we are going to discuss another metric notion, the property (Z), which is (formally) weaker than being a length space. It was introduced in~\cite{ikw} in order to characterise metrically the local metric spaces in the compact case. We will show in Section~\ref{sec:exposed} that property (Z) characterises the absence of strongly exposed points in $B_{\mathcal F(M)}$. 

\begin{definition}\label{d:propertyZ} A metric space $M$ has \emph{property (Z)} if, for every $x,y\in M$ and $\varepsilon>0$, there is $z\in M\setminus\{x,y\}$ satisfying 
\[ d(x,z)+d(z,y)\leq d(x,y) + \varepsilon\min\{d(x,z),d(z,y)\} \]
\end{definition}

It is proved in~\cite{ikw} that every local metric space has property (Z), and that the converse statement holds in the compact case. Note that the former also follows immediately from Proposition~\ref{prop:charlengthspace} and Lemma~\ref{lemma:charlength}.

Moreover, it is also shown in~\cite{ikw} that every compact subset of a smooth LUR Banach space with property (Z) is convex. As a consequence of Proposition~\ref{prop:charlengthspace} we have the following:

\begin{corollary} Let $M$ a compact metric space with property (Z). Then $M$ is a geodesic space. If moreover $M$ is a subset of a rotund Banach space then $M$ is convex.
\end{corollary}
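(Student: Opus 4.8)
The plan is to deduce the first assertion from the results already established in this section together with the cited facts of \cite{ikw}, and then to obtain convexity by combining geodesy with the classical description of metric segments in a strictly convex space. For the first assertion, recall that a compact space is complete, so by \cite{ikw} property (Z) is equivalent to locality in the compact setting (alternatively, one may appeal directly to Proposition~\ref{prop:charlengthspace} and Lemma~\ref{lemma:charlength}, as remarked after Definition~\ref{d:propertyZ}). Hence $M$ is local, and Proposition~\ref{prop:charlengthspace} gives that $M$ is a length space. Since $M$ is compact, Lemma~\ref{lemma:charlength}(a) then yields that $M$ is geodesic: for each pair $x,y$ pick $z_n\in\Mid(x,y,\tfrac1n)$, extract a convergent subsequence by compactness, and check that its limit is a metric midpoint of $x$ and $y$. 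This is exactly the remark recalled after Lemma~\ref{lemma:charlength}.

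For the second assertion, suppose $M\subset X$ with $X$ rotund, the metric of $M$ being the restriction of the norm. Fix $x,y\in M$. By the first part there is a rectifiable curve $\gamma\colon[0,1]\to M$ from $x$ to $y$ with $\mathrm{length}(\gamma)=d(x,y)=\norm{x-y}$. For $t\in[0,1]$ set $z=\gamma(t)$; splitting $\gamma$ at $t$ gives $\norm{x-z}+\norm{z-y}\le\mathrm{length}(\gamma)=\norm{x-y}$, so the triangle inequality forces $\norm{x-z}+\norm{z-y}=\norm{x-y}$. Applying strict convexity to $a=z-x$ and $b=y-z$ (if $\norm{a}+\norm{b}=\norm{a+b}$ with $a,b\neq 0$, then $b$ is a positive multiple of $a$), we conclude $z\in[x,y]$. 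Thus $\gamma([0,1])\subset[x,y]$; being a connected subset of the segment $[x,y]$ containing both endpoints, $\gamma([0,1])$ must equal $[x,y]$. Therefore $[x,y]\subset M$, so $M$ is convex.

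The only step that requires any care is the identification, under strict convexity, of the metric segment $\{z : \norm{x-z}+\norm{z-y}=\norm{x-y}\}$ with the affine segment $[x,y]$, which is classical; the rest is a direct appeal to Proposition~\ref{prop:charlengthspace}, Lemma~\ref{lemma:charlength}, and \cite{ikw}, so I do not anticipate a genuine obstacle.
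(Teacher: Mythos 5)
Your proof is correct and follows essentially the same route as the paper: compactness plus property (Z) gives locality by \cite[Proposition 2.8]{ikw}, Proposition~\ref{prop:charlengthspace} upgrades this to a length space, compactness yields geodesy, and strict convexity identifies metric segments with affine ones. The paper simply asserts the last step (``it is easy to show that every geodesic subset of a rotund Banach space is convex'') where you spell out the verification.
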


\begin{proof} It has been proved in~\cite[Proposition 2.8]{ikw} that a compact metric space with property (Z) is local. Thus the first statement above follows from Proposition~\ref{prop:charlengthspace} and the fact that every compact length space is geodesic. Finally, it is easy to show that every geodesic subset of a rotund Banach space is convex.
\end{proof}

Lemma~\ref{lemma:charlength} says that the complete geodesic spaces are those for which every pair of points has a metric midpoint. However, such characterisation can still be weakened by using the concept of \emph{metric segment}. Given a metric space $M$ and a pair of points $x,y\in M, x\neq y$, we consider the \textit{metric segment joining $x$ and $y$} as the following set:
\[ [x,y]:=\{z\in M: d(x,z)+d(z,y)=d(x,y)\}.\]

 \begin{proposition}\label{p:GeneralStandard}
  Let $M$ be a complete metric space. Then $M$ is geodesic if, and only if, for each couple $x\neq y \in M$ there is $z\in [x,y]\setminus \set{x,y}$.
 \end{proposition}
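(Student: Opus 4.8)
The forward implication is immediate: if $M$ is geodesic, then for any $x\neq y$ a geodesic from $x$ to $y$ passes through points $z$ with $d(x,z)+d(z,y)=d(x,y)$ that are distinct from both endpoints (take any interior point of the geodesic), so $[x,y]\setminus\{x,y\}\neq\emptyset$. The substance is the converse, and the plan is to reduce it to the metric-midpoint characterisation of Lemma~\ref{lemma:charlength}(a). So assume that for every $x\neq y$ there is some $z\in[x,y]\setminus\{x,y\}$; I want to produce, for a fixed pair $x\neq y$, a point $m$ with $d(x,m)=d(m,y)=\tfrac12 d(x,y)$.

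The idea is a bisection/Zorn argument combined with completeness. Fix $x\neq y$. Consider the family of closed sub-segments: starting from $[x,y]$, pick $z_1\in[x,y]\setminus\{x,y\}$; one of the two "halves'' $[x,z_1]$ or $[z_1,y]$ has length (i.e. $d$ of its endpoints) at least $\tfrac12 d(x,y)$ and the other at most $\tfrac12 d(x,y)$ — but more usefully, one can always choose to descend into the sub-segment whose endpoint-distance straddles, or is closest to, the target value $\tfrac12 d(x,y)$. The cleanest route is to run a "binary search for the midpoint'': maintain a nested sequence of pairs $(a_n,b_n)$ with $a_n,b_n\in[x,y]$, $d(x,a_n)\le \tfrac12 d(x,y)\le d(x,b_n)$, and $d(a_n,b_n)\to 0$. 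At each step, if $d(a_n,b_n)>0$, apply the hypothesis to the pair $(a_n,b_n)$ to get $z\in[a_n,b_n]\setminus\{a_n,b_n\}$; note $z\in[x,y]$ as well by additivity of the defining equality along the segment, so $d(x,z)$ is comparable to $\tfrac12 d(x,y)$ on one side, and we set $(a_{n+1},b_{n+1})$ to be either $(a_n,z)$ or $(z,b_n)$ so as to keep $\tfrac12 d(x,y)$ between $d(x,a_{n+1})$ and $d(x,b_{n+1})$.

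The main obstacle — and the step needing real care — is guaranteeing that $d(a_n,b_n)\to 0$: the hypothesis only gives \emph{some} interior point of the segment, not one near the middle, so a naive descent might always cut off a tiny sliver and never shrink the interval. To handle this I would not insist on descending toward the midpoint value at every step, but instead observe that the chosen interior point $z$ satisfies $\min\{d(a_n,z),d(z,b_n)\}>0$ and argue by a maximality/compactness-free argument: consider $L:=\inf d(a,b)$ over all nested pairs reachable this way with $\tfrac12 d(x,y)$ between $d(x,a)$ and $d(x,b)$; if $L>0$, take a pair $(a,b)$ with $d(a,b)<2L$ (slightly better than the inf up to the descent combinatorics), apply the hypothesis to split it, and check that one of the two resulting pairs is again admissible with strictly smaller endpoint-distance, contradicting the definition of $L$ — here one uses that both pieces are nontrivial and their lengths sum to $d(a,b)<2L$, so at least one has length $<L$ and the admissibility (straddling $\tfrac12 d(x,y)$) can be preserved because $d(x,\cdot)$ is monotone and continuous along $[x,y]$ with $d(x,a)<\tfrac12 d(x,y)<d(x,b)$ forcing $d(x,z)$ strictly between for the relevant piece. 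Thus $L=0$.

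Once $d(a_n,b_n)\to 0$ with $(a_n)$, $(b_n)$ nested along the segment, both sequences are Cauchy (their mutual distances and the monotonicity of $d(x,\cdot)$ control everything), so by completeness they converge to a common limit $m\in M$. Continuity of the metric gives $d(x,m)=\lim d(x,a_n)$ and $d(m,y)=\lim d(b_n,y)$, and since $d(x,a_n)\le\tfrac12 d(x,y)\le d(x,b_n)=d(x,y)-d(b_n,y)$ for all $n$, passing to the limit yields $d(x,m)\le\tfrac12 d(x,y)$ and $d(m,y)\le\tfrac12 d(x,y)$; the triangle inequality forces equality in both. Hence $m$ is a metric midpoint of $x$ and $y$, and Lemma~\ref{lemma:charlength}(a) shows $M$ is geodesic. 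I would write the binary-search step carefully and possibly phrase the "$L=0$'' argument as a direct recursive construction choosing $b_{n+1}$ so that $d(a_{n+1},b_{n+1})\le\tfrac{2}{3}d(a_n,b_n)$ or similar, whichever is cleaner given the freedom the hypothesis leaves.
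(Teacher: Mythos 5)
Your overall plan---reduce to producing a metric midpoint and then invoke Lemma~\ref{lemma:charlength}(a)---is legitimate, but two points need attention. The minor one: the claim that the splitting point $z$ of $(a_n,b_n)$ lies in $[x,y]$ ``by additivity'' is \emph{not} a consequence of $a_n,b_n\in[x,y]$ alone (in the four-point cycle with $d(x,a)=d(a,y)=d(x,b)=d(b,y)=1$ and $d(x,y)=d(a,b)=2$ one has $a,b\in[x,y]$ yet the chain $x,a,b,y$ has total length $4$). It does hold provided you maintain the invariant $d(x,a_n)+d(a_n,b_n)+d(b_n,y)=d(x,y)$, which your nested construction preserves; so this is fixable, but the invariant must be stated and carried along, since it is also what gives $d(x,z)=d(x,a_n)+d(a_n,z)$ and hence the monotonicity you rely on.

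The genuine gap is exactly where you sense the difficulty: the argument that $L=0$. After splitting an admissible pair $(a,b)$ with $d(a,b)<2L$ at $z$, the two pieces have lengths summing to less than $2L$, so one has length $<L$ --- but that piece need not be the admissible (straddling) one. If $z$ falls very close to $a$ while $d(x,z)$ stays below $\tfrac12 d(x,y)$, the short piece $(a,z)$ does not straddle the midpoint value, and the straddling piece $(z,b)$ has length only marginally smaller than $d(a,b)$, hence possibly still $\geq L$. So no contradiction with the infimum is obtained, and since the hypothesis gives no control on where the interior point lands, no countable iteration is guaranteed to force $d(a_n,b_n)\to 0$: after $\omega$ steps the nested pairs converge (by completeness and the alignment invariant) to a new aligned straddling pair of possibly positive length, and one must continue. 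A correct version of your descent therefore needs transfinite recursion (it does terminate at a countable ordinal, because at each successor step $d(x,a_\alpha)$ strictly increases or $d(x,b_\alpha)$ strictly decreases, and a strictly monotone bounded family of reals is countable). The paper avoids this bookkeeping by a maximality argument instead: Zorn's lemma applied to partial isometries $\psi\colon A\to M$ with $A\subset[0,1]$ closed, $\{0,1\}\subset A$, $\psi(0)=x$, $\psi(1)=y$; completeness provides upper bounds for chains, and at a maximal element any gap $]a,b[$ of $A$ is filled by one application of the hypothesis to $\psi(a),\psi(b)$, contradicting maximality. I suggest you either adopt that route or recast your binary search as the transfinite recursion sketched above.
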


\begin{proof}
Let $x\neq y \in M$ and assume, with no loss of generality, that $d(x,y)=1$. We show that there is an isometry $\phi:[0,1] \to M$ such that $\phi(0)=x$ and $\phi(1)=y$.
We will do this by Zorn lemma. To this end we consider the set $\mathcal A$ of all $(A,\psi)$ where $\set{0,1} \subset A \subset [0,1]$ is closed and $\psi:A \to X$ is an isometry such that $\psi(0)=x$, $\psi(1)=y$, together with the following partial order ``$\leq$'' on $\mathcal A$: $(A,\psi)\leq (B,\xi)$ if $A\subset B$ and $\xi\restricted_A=\psi$.
Now every chain $(A_i,\psi_i)_{i\in I}$ admits an upper bound. Indeed, take $A=\overline{\bigcup_{i\in I}A_i}$ and $\psi(x):=\psi_i(x)$ if $i \in A_i$. This is an isometry on $\bigcup_{i \in I}A_i$, therefore, since $M$ is complete, it extends uniquely to an isometry on the closure.
Now, let $(A,\phi) \in \max \mathcal A$. If $A\neq [0,1]$ then there are $a<b$ such that $a,b \in A$ and $]a,b[ \cap A=\emptyset$.
By the hypothesis there exists $z \in M$ such that $d(\phi(a),z)+d(\phi(b),z)=d(\phi(a),\phi(b))$. We can define $\phi(a+d(\phi(a),z)):=z$ which is easily seen to be an isometry contradicting the maximality of $(A,\phi)$.
\end{proof}

\section{Metric characterisation of the Daugavet property in Lipschitz-free Banach spaces}\label{sec:daugavet}
We start with an auxiliary result, inspired by~\cite[Theorem 3.1]{pr}.

\begin{proposition}\label{t:circular}
Let $M$ be a pointed metric space. The following assertions are equivalent:
\begin{enumerate}[(i)]
\item\label{hola1}  $\mathcal F(M)$ has the Daugavet property.
\item\label{hola2} For each $\varepsilon>0$, each finite subset $N \subset M$ and each norm-one Lipschitz function $g\colon M\longrightarrow \mathbb R$ there are points $u,v \in M$, $u \neq v$, such that $\frac{g(u)-g(v)}{d(u,v)}>1-\varepsilon$ and that 
every $1$-Lipschitz function $f\colon N \to \mathbb R$ admits an extension $\tilde{f}:M \to \mathbb R$ which is $(1+\varepsilon)$-Lipschitz and satisfies $\tilde{f}(u)-\tilde{f}(v)\geq d(u,v)$. 
\item\label{hola3} For each finite subset $N\subseteq M$ and $\varepsilon>0$, there exist $u,v\in M, u\neq v$, such that
\begin{equation}\label{carametridauga}(1-\varepsilon)(d(x,y)+d(u,v))\leq d(x,u)+d(y,v)
\end{equation}
holds for all $x,y \in N$. Moreover, if we define $A:=\{(u,v)\in M^2\setminus \Delta: \mbox{(\ref{carametridauga}}) \mbox{ holds} \}$, where $\Delta:=\{(x,x)\in M^2: x\in M\}$, then 
\[\left\{\frac{\delta_u-\delta_v}{d(u,v)}: (u,v)\in A\right\}\]
is norming for $\Lip(M)$.
\end{enumerate}
\end{proposition}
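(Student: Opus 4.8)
The plan is to prove the three conditions equivalent by showing \ref{hola1}$\Rightarrow$\ref{hola2}$\Rightarrow$\ref{hola3}$\Rightarrow$\ref{hola1}, using throughout the slice characterisation of the Daugavet property from Theorem~\ref{caragendauga}, the fact that $\Free(M)$ has $B_{\Free(M)}=\clco\set{\frac{\delta_x-\delta_y}{d(x,y)}: x\neq y\in M}$, and the duality $\Free(M)^*=\Lip(M)$ together with the finite-extension (McShane) property of Lipschitz functions.

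\textbf{\ref{hola1}$\Rightarrow$\ref{hola2}.} Fix $\varepsilon>0$, a finite $N\subset M$ and $g\in S_{\Lip(M)}$. Choose first a pair $x_0\neq y_0$ with $\frac{g(x_0)-g(y_0)}{d(x_0,y_0)}$ close to $1$ so that the normalised molecule $m_{x_0,y_0}:=\frac{\delta_{x_0}-\delta_{y_0}}{d(x_0,y_0)}\in S_{\Free(M)}$ almost attains $g$; this produces an element $x\in S_X$ to feed into Theorem~\ref{caragendauga}\eqref{caragendauga2}. For the slice $S$, I would take the slice of $B_{\Free(M)}$ determined by a Lipschitz function $h$ that is $1$ on an $r$-ball and has the right behaviour to force the molecules landing in a deeper slice $T\subset S$ to have their two points $u,v$ close (in the metric) to the respective supports of the pieces of $h$; a clean way is to let $h$ encode the finite data $N$. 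Since $T$ is a slice it meets the extreme-point set, hence contains a molecule $m_{u,v}$, and then $\norm{m_{x_0,y_0}+m_{u,v}}>2-\varepsilon'$ forces simultaneously that $\frac{g(u)-g(v)}{d(u,v)}>1-\varepsilon$ and that the pair $(u,v)$ is ``aligned'' with $N$ in the metric sense that allows the uniform $(1+\varepsilon)$-Lipschitz extension with $\tilde f(u)-\tilde f(v)\geq d(u,v)$; the last assertion is where one uses a quantitative McShane-type extension from $N\cup\set{u,v}$ together with the smallness of the relevant distances.

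\textbf{\ref{hola2}$\Rightarrow$\ref{hola3}.} Given $N$ and $\varepsilon$, apply \ref{hola2} to the function $g$ realising (approximately) the maximum of $\frac{d(x,u)+d(y,v)}{d(x,y)+d(u,v)}$-type quantities — concretely, to a suitable $1$-Lipschitz $g$ built from the metric to $N$ — and obtain $u\neq v$. The hypothesis that \emph{every} $1$-Lipschitz function on $N$ extends to a $(1+\varepsilon)$-Lipschitz $\tilde f$ with $\tilde f(u)-\tilde f(v)\geq d(u,v)$ is exactly what lets one test inequality \eqref{carametridauga}: for fixed $x,y\in N$ take $f$ on $N$ to be (a rescaling of) $t\mapsto d(t,\set{x})-d(t,\set{y})$ or more simply $f=d(\cdot,x)$, so that $\tilde f(u)-\tilde f(v)\geq d(u,v)$ combined with the $(1+\varepsilon)$-Lipschitz bound on $\tilde f$ between $x,y,u,v$ yields \eqref{carametridauga} up to reindexing $\varepsilon$. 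For the "moreover" part, the norming claim for $\set{m_{u,v}:(u,v)\in A}$ follows by running the argument for every finite $N$ and every $g\in S_{\Lip(M)}$: given such $g$, \ref{hola2} applied to $N$ and $g$ produces $(u,v)$ which one checks lies in $A$ (that is what \eqref{carametridauga} says) and with $\abs{\duality{g,m_{u,v}}}>1-\varepsilon$, so the closed absolutely convex hull of that molecule set is all of $B_{\Free(M)}$, i.e. the set is norming.

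\textbf{\ref{hola3}$\Rightarrow$\ref{hola1}.} I would verify Theorem~\ref{caragendauga}\eqref{caragendauga3}: fix $x\in S_{\Free(M)}$, $\varepsilon>0$; since molecules are dense in $S_{\Free(M)}$ and the target set in \eqref{caragendauga3} is closed and convex, it suffices to treat $x=m_{p,q}$ a molecule. Take $N=\set{0,p,q}$ (and enlarge as needed) and apply \ref{hola3} to get $(u,v)\in A$; inequality \eqref{carametridauga} applied with $(x,y)=(p,q)$ forces $\norm{m_{p,q}-m_{u,v}}$ to be close to $2$ by a direct estimate in $\Free(M)$ (bounding the norm of $\frac{\delta_p-\delta_q}{d(p,q)}-\frac{\delta_u-\delta_v}{d(u,v)}$ from below by pairing with an appropriate Lipschitz function or by the known formula for norms of linear combinations of molecules), while the norming property of the molecule family over all such $A$ guarantees one can reach every point of $B_{\Free(M)}$, not just the chosen $x$; assembling these gives the representation in \eqref{caragendauga3}.

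\textbf{Main obstacle.} The delicate step is \ref{hola1}$\Rightarrow$\ref{hola2}, specifically extracting from a single deep slice $T\subset S$ a pair $(u,v)$ that simultaneously (a) nearly maximises $g$ and (b) admits the \emph{uniform} $(1+\varepsilon)$-Lipschitz extension of \emph{all} $1$-Lipschitz functions on the prescribed finite set $N$. Controlling (b) requires choosing the defining functional of $S$ so that membership in $T$ geometrically pins down where $u$ and $v$ sit relative to $N$ (so that the McShane extension cannot be obstructed), and getting the quantitative dependence of the Lipschitz constant of the extension on the slice width right; this is the part that genuinely uses the structure of $\Free(M)$ rather than a soft argument.
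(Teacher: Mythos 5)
Your cycle runs in the opposite direction to the paper's, and the two arrows you would need to make it close are precisely the ones that are not carried out. The paper proves \ref{hola2}$\Rightarrow$\ref{hola1}, \ref{hola1}$\Rightarrow$\ref{hola3}, \ref{hola3}$\Rightarrow$\ref{hola2}, deliberately avoiding a direct proof of \ref{hola1}$\Rightarrow$\ref{hola2}: your ``main obstacle'' paragraph correctly identifies that one cannot easily extract from a deep slice a pair $(u,v)$ for which \emph{every} $1$-Lipschitz $f$ on $N$ extends, but you do not resolve it, and the choice of a defining functional $h$ ``encoding $N$'' is left entirely unspecified. The actual mechanism is: the Daugavet property plus the fact that the molecules are norming (Lemma~\ref{lemanormante} --- note that your justification ``a slice meets the extreme-point set'' is not valid in general; one uses instead that the complement of a slice in $B_X$ is closed and convex, so any set whose closed convex hull is $B_X$ meets every slice) produces a single $m_{uv}$ deep in the slice of $g$ with $\Vert m_{uv}+m_{xy}\Vert>2-\varepsilon$ \emph{simultaneously} for all $x,y\in N$; a convexity argument converts this into the metric inequality \eqref{carametridauga}; and then the explicit formulas $\tilde f(u)=\inf_{x\in N}f(x)+(1+\varepsilon)d(x,u)$, $\tilde f(v)=\sup_{x\in N\cup\{u\}}\tilde f(x)-(1+\varepsilon)d(x,v)$ give the extension. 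In other words, \ref{hola3} is the indispensable intermediate between \ref{hola1} and \ref{hola2}, and you should prove \ref{hola1}$\Rightarrow$\ref{hola3}$\Rightarrow$\ref{hola2} rather than attempt \ref{hola1}$\Rightarrow$\ref{hola2} directly.

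The second gap is in \ref{hola3}$\Rightarrow$\ref{hola1}. The reduction ``molecules are dense in $S_{\Free(M)}$, so it suffices to treat $x=m_{p,q}$'' is false: molecules are not norm-dense in the unit sphere (finitely supported elements are), and since the set in Theorem~\ref{caragendauga}\eqref{caragendauga3} depends on $x$, its convexity does not let you restrict the quantifier over $x$ to a non-dense family. For a genuine finitely supported $\mu=\sum_i a_i\delta_{x_i}$, knowing $\Vert m_{uv}-m_{x_ix_j}\Vert>2-\varepsilon$ for each pair separately does \emph{not} bound $\Vert m_{uv}-\mu\Vert$ from below: you need one Lipschitz function that simultaneously almost norms $\mu$ and has increment $\geq d(u,v)$ on $(u,v)$, which is exactly the extension statement \ref{hola2} applied to a norming function of $\mu$ on $N=\mathrm{supp}(\mu)\cup\{0\}$. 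So your third arrow secretly presupposes \ref{hola3}$\Rightarrow$\ref{hola2}. By contrast, your \ref{hola2}$\Rightarrow$\ref{hola3} is essentially correct: testing with $f=d(\cdot,x)\restricted_N$ gives $\frac{1}{1+\varepsilon}(d(x,y)+d(u,v))\leq d(x,u)+d(y,v)$, and the norming claim follows by varying $g$; only the bookkeeping between $\frac{1}{1+\varepsilon}$ and $1-\varepsilon$ needs to be recorded.
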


For the proof of the Proposition~\ref{t:circular} we will need the following lemma.

\begin{lemma}\label{lemanormante}
Let $X$ be a Banach space with the Daugavet property and let $V\subseteq S_X$ be a norming subset for $X^*$. Then, given $x_1,\ldots, x_n\in S_X$, $\varepsilon>0$ and a slice $S$ of $B_X$, there exists $v\in V\cap S$ such that
\[\Vert x_i+ v\Vert>2-\varepsilon\]
holds for every $i\in\{1,\ldots, n\}$.
\end{lemma}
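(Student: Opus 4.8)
\textbf{Proof plan for Lemma~\ref{lemanormante}.}

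The plan is to use the geometric characterisation of the Daugavet property from Theorem~\ref{caragendauga} together with an iterative shrinking-of-slices argument. The key point to exploit is that a norming subset $V \subseteq S_X$ cannot be "avoided" by a slice: the convex hull of $V$ is weak$^*$-dense (on the predual side) — more to the point, since $V$ is norming for $X^*$, for every $f \in S_{X^*}$ and every $\alpha > 0$ the slice $S(B_X, f, \alpha)$ meets $V$, because $\sup_{v \in V} f(v) = \norm{f} = 1$. So $V$ is dense in $B_X$ in the sense that it intersects every slice.

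First I would fix $x_1, \dots, x_n \in S_X$, $\varepsilon > 0$ and a slice $S = S_0$ of $B_X$. I apply Theorem~\ref{caragendauga}\ref{caragendauga2} with the vector $x_1$, the slice $S_0$ and the parameter $\varepsilon$ to obtain a slice $S_1 \subseteq S_0$ such that $\norm{x_1 + y} > 2 - \varepsilon$ for every $y \in S_1$. Then I apply the same theorem again, this time with $x_2$, the slice $S_1$ and $\varepsilon$, obtaining $S_2 \subseteq S_1$ with $\norm{x_2 + y} > 2 - \varepsilon$ for all $y \in S_2$. Iterating $n$ times produces a nested chain of slices $S_0 \supseteq S_1 \supseteq \dots \supseteq S_n$ with the property that for every $y \in S_n$ and every $i \in \{1, \dots, n\}$ one has $\norm{x_i + y} > 2 - \varepsilon$ (each condition, once achieved on $S_i$, persists on every smaller slice). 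Finally, since $S_n$ is a slice and $V$ is norming for $X^*$, the slice $S_n$ contains a point $v \in V$; this $v$ lies in $V \cap S \subseteq V \cap S_0$ and satisfies $\norm{x_i + v} > 2 - \varepsilon$ for all $i$, which is exactly the conclusion.

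I do not expect a serious obstacle here; the only mild subtlety is the bookkeeping that the inequality $\norm{x_i + y} > 2 - \varepsilon$ established at stage $i$ is not destroyed at later stages — this is immediate from $S_n \subseteq S_i$ — and the observation that a norming set for $X^*$ intersects every slice of $B_X$, which is a direct consequence of the definition of a norming set. One could alternatively phrase the argument so as to carry along the constant $\varepsilon$ uniformly, but splitting the work into $n$ successive applications of Theorem~\ref{caragendauga}\ref{caragendauga2} keeps the exposition cleanest.
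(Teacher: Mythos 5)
Your proposal is correct and follows essentially the same route as the paper: $n$ successive applications of Theorem~\ref{caragendauga}~(\ref{caragendauga2}) to produce a nested chain of slices on which all $n$ inequalities hold simultaneously, followed by the observation that a norming set must meet every slice (the paper phrases this last step via $\overline{\conv}(V)=B_X$ and the convexity of the complement of a slice, while you argue directly from $\sup_{v\in V}f(v)=1$; both implicitly use that $V$ is symmetric, or that norming is meant without absolute values, which holds in the paper's application). No changes needed.
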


\begin{proof}
Since $X$ has the Daugavet property then, using $n$-times Theorem~\ref{caragendauga}, we can find a slice $T\subseteq S$ of $B_X$ such that for every $y\in T$ one has
\[\Vert x_i+y\Vert>2-\varepsilon\]
for every $i\in\{1,\ldots, n\}$. Since $V$ is norming for $X^*$ it follows from an easy application of Hahn-Banach theorem that $\overline{\conv}(V)=B_X$. Thus $\overline{\conv}(V)\cap T\neq \emptyset$ and so $V\cap T\neq \emptyset$, which concludes the proof.
\end{proof} 

\begin{proof}[Proof of Proposition~\ref{t:circular}]
\ref{hola2}$\Rightarrow$\ref{hola1}: 
Let $\mu \in S_{\Free(M)}$, $g\in \sphere{\Lip(M)}$ and $\varepsilon>0$. We suppose as we may that $N=supp(\mu) \cup \set{0}$ is finite.
By~\ref{hola2} we can find $u,v\in M, u\neq v$ such that $\frac{g(u)-g(v)}{d(u,v)}>1-\varepsilon$. 
Moreover if $f \in \closedball{\Lip(M)}$ is such that $\duality{f,\mu}=\norm{\mu}$  there exists $\tilde{f}\in \Lip(M)$ such that $f=\tilde{f}$ on $N$, $\tilde{f}(u)-\tilde{f}(v)\geq d(u,v)$ and $\norm{\tilde{f}}_L\leq 1+\varepsilon$. 
Now
\[
\left\Vert\frac{\delta_u-\delta_v}{d(u,v)}+\mu \right\Vert\geq \frac{\frac{\tilde{f}(u)-\tilde{f}(v)}{d(u,v)}+\tilde{f}(\mu)}{1+\varepsilon}\geq\frac{1+\Vert \mu\Vert}{1+\varepsilon}.
\]
It follows that $\norm{Id+g\otimes\mu}\geq\norm{\frac{\delta_u-\delta_v}{d(u,v)}+\duality{g,\frac{\delta_u-\delta_v}{d(u,v)}}\mu}\geq 2-3\varepsilon$
so we conclude that $\mathcal F(M)$ has the Daugavet property, as desired.\\
\ref{hola1}$\Rightarrow$\ref{hola3}: Let $N\subseteq M$ be finite and $\varepsilon>0$. Since $\mathcal F(M)$ has the Daugavet property we can find, using Proposition~\ref{lemanormante},  for every $g\in \sphere{\Lip(M)}$ and every $\alpha>0$ two elements $u\neq v\in M$ such that $\frac{\delta_u-\delta_v}{d(u,v)}\in S(B_{\mathcal F(M)},g,\alpha)$ and that 
\[
\left\Vert \frac{\delta_x-\delta_y}{d(x,y)}+ \frac{\delta_u-\delta_v}{d(u,v)} \right\Vert>2-\varepsilon,
\]
holds for every $x\neq y\in N$.  
By an easy convexity argument (see the proof of~\cite[Theorem 3.1]{pr} for details) we conclude that

\[(1-\varepsilon)(d(x,y)+d(u,v))<d(x,v)+d(u,y)\]
holds for every $x\neq y\in N$. In addition, since $g\in S_{\Lip(M)}$ and $\alpha>0$ were arbitrary we conclude that the set \[\left\{\frac{\delta_u-\delta_v}{d(u,v)}: (u,v)\in A\right\}\]
is norming for $\Lip(M)$, as desired.\\
\ref{hola3}$\Rightarrow$\ref{hola2}: Let $N \subset M$ finite, $g\in S_{\Lip(M)}$ and $\varepsilon>0$ be given. 
By the assumptions, there are $u,v \in M$, $u\neq v$, such that $\frac{g(u)-g(v)}{d(u,v)}>1-\varepsilon$ and  
\[
\frac{1}{1+\varepsilon}(d(x,y)+d(u,v))\leq d(x,u)+d(y,v)
\]
for all $x,y \in N$.
Given a $1$-Lipschitz function $f$ on $N$ we define $\displaystyle\tilde{f}(u)=\inf_{x\in N} f(x)+(1+\varepsilon)d(x,u)$, $\displaystyle\tilde{f}(v)=\sup_{x\in N\cup\{u\}} \displaystyle\tilde{f}(x)-(1+\varepsilon)d(x,v)$. 
Clearly $\tilde{f}$ is $(1+\varepsilon)$-Lipschitz on $N \cup \{u,v\}$ so it admits an $(1+\varepsilon)$-Lipschitz extension to the whole of $M$.
It can be easily seen that $\tilde{f}(u)-\tilde{f}(v)\geq d(u,v)$ (see the proof of~\cite[Theorem 3.1]{pr} for details) so the proof is finished.\end{proof}

The main result of the present article is the following theorem. It improves~\cite[Theorem 3.3]{ikw} where the equivalence between points ii) and iii) is proved for $M$ compact.

\begin{theorem}\label{caracolocal} Let $M$ be a complete pointed metric space. The following assertions are equivalent:
\begin{enumerate}[(i)] 
\item\label{caracolocal1} $M$ is a length space.
\item\label{caracolocal2} $\Lip(M)$ has the Daugavet property. 
\item\label{caracolocal3} $\mathcal F(M)$ has the Daugavet property. 
\end{enumerate}
\end{theorem}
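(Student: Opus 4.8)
\textbf{Proof plan for Theorem~\ref{caracolocal}.}

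The plan is to close the cycle of implications \ref{caracolocal1}$\Rightarrow$\ref{caracolocal3}$\Rightarrow$\ref{caracolocal2}$\Rightarrow$\ref{caracolocal1}, using Proposition~\ref{t:circular} and Proposition~\ref{prop:charlengthspace} as the two main engines. The implication \ref{caracolocal1}$\Rightarrow$\ref{caracolocal3} is where the real work goes; the other two are comparatively soft.

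For \ref{caracolocal1}$\Rightarrow$\ref{caracolocal3}, I would verify condition \ref{hola2} of Proposition~\ref{t:circular}. So fix $\varepsilon>0$, a finite set $N\subset M$, and a norm-one Lipschitz $g$. Since $M$ is a length space, by Proposition~\ref{prop:charlengthspace} it is spreadingly local, so the set of points $x$ where $g$ has almost-maximal local slope is infinite; in particular I can choose a point $w$ realising local slope $>1-\varepsilon/2$ for $g$ which is ``far'' from $N$ in a suitable sense (this is where the infinitude in spreading locality, rather than mere locality, is used). Then pick $u\neq v$ in a tiny ball $B(w,\delta)$ with $\frac{g(u)-g(v)}{d(u,v)}>1-\varepsilon$ and $\delta$ chosen so small that $d(u,v)$, and the diameter of $B(w,\delta)$, is negligible compared to $\min_{x\in N}d(x,w)$ and compared to the minimal nonzero distances occurring in $N$. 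The point is that for any $1$-Lipschitz $f$ on $N$, the McShane/Whitney extension formulas $\tilde f(u):=\inf_{x\in N}f(x)+(1+\varepsilon)d(x,u)$ and $\tilde f(v):=\sup_{x\in N\cup\{u\}}\tilde f(x)-(1+\varepsilon)d(x,v)$ give a $(1+\varepsilon)$-Lipschitz function on $N\cup\{u,v\}$ (hence $(1+\varepsilon)$-Lipschitz on $M$ by McShane), and because $u,v$ are so close together and so far from $N$, the ``slack'' $(1+\varepsilon)d(x,u)-d(x,v)\geq (1+\varepsilon)d(x,u)-d(x,u)-d(u,v)=\varepsilon d(x,u)-d(u,v)$ is nonnegative and in fact $\geq d(u,v)$ once $d(u,v)$ is small enough relative to $\varepsilon\min_{x\in N}d(x,u)$; this forces $\tilde f(u)-\tilde f(v)\geq d(u,v)$. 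This is essentially the mechanism already used in \ref{hola3}$\Rightarrow$\ref{hola2}, now made to work for genuine length spaces by exploiting that $u,v$ can be placed in an arbitrarily small ball arbitrarily far from $N$.

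For \ref{caracolocal3}$\Rightarrow$\ref{caracolocal2}: $\Lip(M)=\Free(M)^*$, and it is a standard fact (see \cite{kssw,wer}) that a Banach space with the Daugavet property has a dual with the Daugavet property; so this is immediate. For \ref{caracolocal2}$\Rightarrow$\ref{caracolocal1}: I would argue the contrapositive. If $M$ is not a length space, then by Proposition~\ref{prop:charlengthspace} it is not local, so there is a Lipschitz $f$ with $\norm{f}_L=1$ and $\varepsilon_0>0$ such that $\frac{f(u)-f(v)}{d(u,v)}>1-\varepsilon_0$ forces $d(u,v)\geq\rho$ for some fixed $\rho>0$ — indeed this is exactly the structure produced in the proof of \ref{it:len3}$\Rightarrow$\ref{it:len1}, where $f=f_1+f_2$ with the two pieces supported on balls a definite distance $\delta r$ apart. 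One then plays this against the slice characterisation \ref{caragendauga3} of Theorem~\ref{caragendauga}: take $x:=f\in S_{\Lip(M)}$; any $h\in (1+\varepsilon)B_{\Lip(M)}$ with $\norm{h-f}_{L}>2-\varepsilon$ must have $h$ nearly $1$-Lipschitz and nearly ``anti-aligned'' with $f$ on some pair $u,v$, but the rigidity of where $f$ has large slope (forcing $u\in B(x_0,(1+\delta)r)$, $v\in B(y_0,(1+\delta)r)$, $d(u,v)\geq\delta r$) prevents $f$ from lying in the closed convex hull of such $h$'s, contradicting \ref{caragendauga3}. The cleanest route is probably to exhibit a functional in $\Free(M)$ (a suitable convex combination of molecules $\frac{\delta_a-\delta_b}{d(a,b)}$ with $a,b$ on opposite sides) that separates $f$ from that convex hull.

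\textbf{Main obstacle.} The delicate point is \ref{caracolocal1}$\Rightarrow$\ref{caracolocal3}, specifically making the quantifiers in condition \ref{hola2} of Proposition~\ref{t:circular} line up: one must choose the pair $(u,v)$ \emph{after} seeing $N$ and $g$ but in such a way that the extension estimate $\tilde f(u)-\tilde f(v)\geq d(u,v)$ holds \emph{uniformly} over all $1$-Lipschitz $f$ on $N$. This is exactly what spreading locality buys us over plain locality — it guarantees infinitely many candidate points of almost-maximal slope, so we can always dodge $N$ — and getting the order of choices and the smallness conditions on $\delta$ and $d(u,v)$ right is the crux. The reverse direction \ref{caracolocal2}$\Rightarrow$\ref{caracolocal1} has a minor subtlety too, namely ensuring the explicit separating functional is genuinely in the predual $\Free(M)$ so that \ref{caragendauga3} can be applied, but this should be routine given the concrete form of $f_1,f_2$.
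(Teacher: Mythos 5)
Your cycle is (i)$\Rightarrow$(iii)$\Rightarrow$(ii)$\Rightarrow$(i), and the link \ref{caracolocal3}$\Rightarrow$\ref{caracolocal2} rests on a false ``standard fact''. The Daugavet property passes from a \emph{dual} space to its \emph{predual}, not the other way round: $\mathcal C([0,1])$ has the Daugavet property while its dual $\mathcal M([0,1])$, an $L_1$-space over a measure with atoms, does not. (This is precisely what makes the theorem interesting; the paper's remark after it points out that $\Free(M)$ is not $L$-embedded in general, so no known self-duality result for the Daugavet property applies.) With that link removed your argument never establishes any implication \emph{into} (ii): even granting your (i)$\Rightarrow$(iii) and (ii)$\Rightarrow$(i), and adding the genuine standard fact (ii)$\Rightarrow$(iii), nothing yields (i)$\Rightarrow$(ii). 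You must prove directly that $M$ being a length space forces $\Lip(M)$ to have the Daugavet property. The paper does this (following Ivakhno--Kadets--Werner) via condition (3) of Theorem~\ref{caragendauga}: spreading locality provides $n$ pairwise disjoint balls $B(x_i,r)$ each containing a pair of almost-maximal slope for $f$; one builds $f_i\in(1+\varepsilon)B_{\Lip(M)}$ equal to $f$ on $\{x_i,y_i\}$ and to $g$ off $B(x_i,r)$, and the average $\frac1n\sum_{i=1}^n f_i$ approximates $g$ to within $O(1/n)$ because the perturbations have disjoint supports. The correct economical cycle is then (i)$\Rightarrow$(ii)$\Rightarrow$(iii)$\Rightarrow$(i), with (ii)$\Rightarrow$(iii) the free step.

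Two further points. Your sketch of \ref{caracolocal2}$\Rightarrow$\ref{caracolocal1} is also unconvincing as stated: for $h$ with $\Vert h-f\Vert_L>2-\varepsilon$ the witnessing pair $(u,v)$ is only localised to the balls $B(x_0,(1+\delta)r)$ and $B(y_0,(1+\delta)r)$, whose radii are comparable to $\frac12 d(x_0,y_0)$, so the candidate separating molecule $\frac{\delta_{x_0}-\delta_{y_0}}{d(x_0,y_0)}$ does not work (the error $|h(u)-h(y_0)|\leq(1+\varepsilon)(1+\delta)r$ swamps the gain $(1-\varepsilon)\delta r$, and different $h$'s localise at different pairs). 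The paper avoids this direction entirely and instead proves \ref{caracolocal3}$\Rightarrow$\ref{caracolocal1} via the quantitative Lemma~\ref{lemalocal}, averaging the four functions $f$, $d(y,\cdot)$, $-d(x,\cdot)$, $f_{xy}$ and feeding the result into Proposition~\ref{t:circular} to produce pairs of geometrically decreasing separation, whence locality and then Proposition~\ref{prop:charlengthspace}. On the positive side, your \ref{caracolocal1}$\Rightarrow$\ref{caracolocal3} via condition \ref{hola2} of Proposition~\ref{t:circular} is sound in outline --- spreading locality lets you choose $w\notin N$ and then $u,v\in B(w,\delta)$ with $\delta$ small relative to $\varepsilon\dist(w,N)$, which is exactly what makes the McShane extension estimate $\tilde f(u)-\tilde f(v)\geq d(u,v)$ go through --- but by itself it only reaches the predual statement (iii).
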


In order to prove Theorem~\ref{caracolocal} we will consider for every $x,y\in M$, $x\neq y$, the function
\[f_{xy}(t):= \frac{d(x,y)}{2}\frac{d(t,y)-d(t,x)}{d(t,y)+d(t,x)}.\]
The properties collected in the next lemma have been proved already in~\cite{ikw2}. They make of $f_{xy}$ a useful tool for studying the geometry of $B_{\mathcal F(M)}$.

\begin{lemma}\label{lemma:IKWfunction} Let $x,y\in M$ with $x\neq y$. We have
\begin{enumerate}[(a)]
\item $\frac{f_{xy}(u)-f_{xy}(v)}{d(u,v)} \leq \frac{d(x,y)}{\max\{d(x,u)+d(u,y),d(x,v)+d(v,y)\}}$ for all $u\neq v \in M$. 
\item $f_{xy}$ is Lipschitz and $\norm{f_{xy}}_{L}\leq 1$. 
\item Let $u\neq v \in M$ and $\varepsilon>0$ be such that $\frac{f_{xy}(u)-f_{xy}(v)}{d(u,v)}>1-\varepsilon$. Then 
\[(1-\varepsilon)\max\{d(x,v)+d(y,v),d(x,u)+d(y,u)\}< d(x,y).\]
\item If $u\neq v \in M$ and $\frac{f_{xy}(u)-f_{xy}(v)}{d(u,v)}=1$, then $u,v\in [x,y]$.
\end{enumerate} 
\end{lemma}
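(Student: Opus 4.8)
The plan is to prove Lemma~\ref{lemma:IKWfunction} by direct manipulation of the explicit formula for $f_{xy}$, treating items (a)--(d) in order since each feeds into the next.

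\textbf{Item (a).} First I would fix $u \neq v$ and write $p = d(x,u)+d(u,y)$, $q = d(x,v)+d(v,y)$, and $D = d(x,y)$. A short computation from the definition gives
\[
f_{xy}(u) - f_{xy}(v) = \frac{D}{2}\left( \frac{d(u,y)-d(u,x)}{d(u,y)+d(u,x)} - \frac{d(v,y)-d(v,x)}{d(v,y)+d(v,x)} \right).
\]
The key algebraic step is to clear denominators: the bracket equals $\frac{2\bigl(d(u,y)d(v,x) - d(u,x)d(v,y)\bigr)}{(d(u,x)+d(u,y))(d(v,x)+d(v,y))}$. So $f_{xy}(u)-f_{xy}(v) = \frac{D\,(d(u,y)d(v,x) - d(u,x)d(v,y))}{pq}$. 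Now I would bound the numerator $|d(u,y)d(v,x) - d(u,x)d(v,y)|$ by $d(u,v)\cdot\max\{p,q\}$; this is the heart of the estimate. Writing $d(u,y) = d(v,y) + s$ and $d(u,x) = d(v,x) + r$ with $|r|,|s| \le d(u,v)$ (triangle inequality), the numerator becomes $|s\, d(v,x) - r\, d(v,y)|$, and since $s - r = d(u,y)-d(u,x) - (d(v,y)-d(v,x))$ one gets $|s\,d(v,x) - r\,d(v,y)| \le \max\{|r|,|s|\}\cdot(d(v,x)+d(v,y))$ after regrouping — more carefully, one splits symmetrically so that $\max\{p,q\}$ appears. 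Dividing by $d(u,v)\,pq$ and using $pq \ge \max\{p,q\}\cdot\min\{p,q\}$ together with $\min\{p,q\} \ge D$ (triangle inequality $d(x,z)+d(z,y)\ge d(x,y)$) yields exactly the claimed bound $\frac{D}{\max\{p,q\}}$.

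\textbf{Items (b), (c), (d).} Item (b) is then immediate from (a): since $\max\{p,q\} \ge \min\{p,q\} \ge D$, the right-hand side of (a) is at most $1$, hence $\norm{f_{xy}}_L \le 1$ (and $f_{xy}$ is bounded, hence Lipschitz, so it lies in $\justLip(M)$). For item (c), if $\frac{f_{xy}(u)-f_{xy}(v)}{d(u,v)} > 1-\varepsilon$, then (a) forces $\frac{D}{\max\{p,q\}} > 1-\varepsilon$, i.e. $(1-\varepsilon)\max\{d(x,u)+d(y,u),\, d(x,v)+d(y,v)\} < D = d(x,y)$, which is precisely the assertion. Item (d) is the limiting case $\varepsilon \to 0$ of (c) with equality: if the difference quotient equals $1$, then from (a) we get $\max\{p,q\} \le D$; combined with $p,q \ge D$ this gives $p = q = D$, i.e. $d(x,u)+d(u,y) = d(x,v)+d(v,y) = d(x,y)$, which says exactly $u,v \in [x,y]$.

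\textbf{Main obstacle.} The only real work is the numerator estimate $|d(u,y)d(v,x) - d(u,x)d(v,y)| \le d(u,v)\max\{p,q\}$ in item (a); everything else is bookkeeping with the triangle inequality. I would be careful to organise that estimate so the $\max$ (rather than, say, the sum $p+q$) comes out, since a weaker bound there would not give $\norm{f_{xy}}_L \le 1$. Since the paper says these facts are already established in~\cite{ikw2}, one could alternatively just cite that reference, but the computation above is short enough to include.
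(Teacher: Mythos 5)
Your overall strategy coincides with the paper's: the paper also reduces everything to a single elementary algebraic estimate for item (a) (quoted from the corrigendum \cite{ikw2} in the form $\bigl| \frac{u_1-v_1}{u_1+v_1} - \frac{u_2-v_2}{u_2+v_2}\bigr| \leq 2\frac{\max\{|u_1-u_2|,|v_1-v_2|\}}{\max\{u_1+v_1,u_2+v_2\}}$, which is exactly your numerator inequality after clearing denominators) and then notes that (b), (c), (d) are straightforward consequences. Your derivations of (b), (c), (d) from (a) are correct and match the intended ones; the computation $f_{xy}(u)-f_{xy}(v)=\frac{d(x,y)\,(d(u,y)d(v,x)-d(u,x)d(v,y))}{pq}$ is also correct.

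There is, however, a direction error in the one step you yourself single out as the heart of the matter. You assert the numerator bound $|d(u,y)d(v,x)-d(u,x)d(v,y)|\leq d(u,v)\max\{p,q\}$ and then divide by $d(u,v)\,pq$. Since $pq=\max\{p,q\}\cdot\min\{p,q\}$ exactly, that chain yields $\frac{d(x,y)}{\min\{p,q\}}$, not the claimed $\frac{d(x,y)}{\max\{p,q\}}$. The weaker bound still gives (b) (because $\min\{p,q\}\geq d(x,y)$), but it is useless for (c) and (d), which genuinely require the $\max$ in the denominator. The inequality you need is the opposite one, $|d(u,y)d(v,x)-d(u,x)d(v,y)|\leq d(u,v)\min\{p,q\}$, and your own substitution in fact proves it: with $d(u,y)=d(v,y)+s$ and $d(u,x)=d(v,x)+r$ the numerator is $|s\,d(v,x)-r\,d(v,y)|\leq \max\{|r|,|s|\}\,q\leq d(u,v)\,q$, while the symmetric substitution (expressing the $v$-distances in terms of the $u$-distances) gives the bound $d(u,v)\,p$; taking the better of the two gives the $\min$, and then $\frac{d(x,y)\min\{p,q\}}{pq}=\frac{d(x,y)}{\max\{p,q\}}$ as required. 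So the argument is sound once $\max$ is replaced by $\min$ in the stated key inequality, but as literally written the displayed chain does not produce the conclusion of (a), and your closing remark about "organising the estimate so the $\max$ comes out" has the roles of $\max$ and $\min$ reversed.
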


\begin{proof}
Statement (a) follows from the next easily proved fact (see~\cite{ikw2}): Given $u_1,v_1,u_2,v_2>0$, we have
\begin{equation*}
\left| \frac{u_1-v_1}{u_1+v_1} - \frac{u_2-v_2}{u_2+v_2}\right| \leq 2\frac{\max\{|u_1-u_2|,|v_1-v_2|\}}{\max\{u_1+v_1,u_2+v_2\}}.
\end{equation*}
Finally, the statements (b),(c) (resp.\ (d)) are a straightforward consequence of~(a) (resp.\ (c)).
\end{proof}

We will need one more lemma, which is an extension of Lemma 3.2 in~\cite{ikw}.

\begin{lemma}\label{lemalocal} Assume that $\mathcal F(M)$ has the Daugavet property. Then for every $x,y\in M$ and every function $f\in S_{\Lip(M)}$ such that $f(x)-f(y)> (1-\varepsilon)d(x,y)$ there exist $u,v\in M$ such that $f(u)-f(v)> (1-\varepsilon)d(u,v)$ and $d(u,v)<\frac{\varepsilon}{(1-\varepsilon)^2}d(x,y)$.
\end{lemma}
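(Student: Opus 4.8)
The plan is to use the Daugavet property of $\mathcal F(M)$ through the metric reformulation obtained in Proposition~\ref{t:circular}, applied to the finite set $N=\{x,y,0\}$ and to the function $g=f$. This produces points $u,v\in M$, $u\neq v$, with $\frac{f(u)-f(v)}{d(u,v)}>1-\varepsilon'$ for a small $\varepsilon'$ to be chosen, together with the metric inequality \eqref{carametridauga} for all pairs in $N$; in particular, applied to $x,y\in N$, it gives a lower bound of the form $(1-\varepsilon')(d(x,y)+d(u,v))\leq d(x,u)+d(y,v)$ (or the symmetric version). The idea is then to combine this metric inequality with the assumption $f(x)-f(y)>(1-\varepsilon)d(x,y)$ and with the fact that $f$ is $1$-Lipschitz, and a triangle-inequality bookkeeping, to force $d(u,v)$ to be small relative to $d(x,y)$.

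First I would record the two pieces of information coming from the choice of $u,v$: the slope condition $f(u)-f(v)>(1-\varepsilon')d(u,v)$, and the metric condition $(1-\varepsilon')(d(x,y)+d(u,v))\leq d(x,u)+d(y,v)$ (after possibly relabelling $u\leftrightarrow v$ so that the $+$-signs are arranged correctly; $f(u)-f(v)$ large should be matched with the configuration where $u$ is ``near $x$'' and $v$ is ``near $y$''). Then, using that $\|f\|_L\le 1$, one has $f(x)-f(u)\le d(x,u)$ and $f(v)-f(y)\le d(v,y)$, hence
\[
f(x)-f(y)=\bigl(f(x)-f(u)\bigr)+\bigl(f(u)-f(v)\bigr)+\bigl(f(v)-f(y)\bigr)
\]
gives a relation tying $d(x,u)+d(v,y)$, the slope term, and $f(x)-f(y)$ together. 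Since $f(x)-f(y)\le d(x,y)$ trivially and $f(x)-f(y)>(1-\varepsilon)d(x,y)$ by hypothesis, one extracts that $d(x,u)+d(v,y)$ is close to $d(x,y)-d(u,v)$ up to controlled error terms in $\varepsilon$ and $\varepsilon'$. Feeding this back into the metric inequality $(1-\varepsilon')(d(x,y)+d(u,v))\leq d(x,u)+d(y,v)$ yields an inequality of the form $(1-\varepsilon')(d(x,y)+d(u,v))\le d(x,y)-d(u,v)+(\text{error})$, i.e. roughly $2d(u,v)\lesssim (\varepsilon+\varepsilon')d(x,y)$, from which $d(u,v)<\frac{\varepsilon}{(1-\varepsilon)^2}d(x,y)$ follows after choosing $\varepsilon'$ appropriately and being careful with the precise constants (the $(1-\varepsilon)^2$ in the denominator suggests one also needs to renormalise slightly, since the produced $u,v$ satisfy the slope condition only for $1-\varepsilon'$ rather than $1-\varepsilon$, and one wants the conclusion stated with the \emph{same} $\varepsilon$; alternatively one applies the hypothesis-producing step with a slightly smaller parameter so that the output slope still exceeds $1-\varepsilon$).

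The main obstacle I anticipate is purely the constant-chasing: getting exactly the bound $d(u,v)<\frac{\varepsilon}{(1-\varepsilon)^2}d(x,y)$ rather than some weaker $d(u,v)=O(\varepsilon)\,d(x,y)$ requires choosing the auxiliary parameter $\varepsilon'$ (and the parameter with which Proposition~\ref{t:circular}, or rather the normingness part of it, is invoked) carefully, and tracking which triangle inequalities are tight. There is also a bookkeeping subtlety in arranging the roles of $u$ and $v$ consistently between the slope condition and the metric inequality \eqref{carametridauga}, and in ensuring the produced $u,v$ are distinct and not equal to $x$ or $y$ — but these are handled exactly as in the proof of Proposition~\ref{t:circular} and as in Lemma~3.2 of~\cite{ikw}, of which this is the stated extension, so the structure of the argument is already available; the only new content is allowing $x,y$ to be an arbitrary pair realising the slope of $f$ up to $\varepsilon$, rather than the pair where the supremum defining $\|f\|_L$ is (almost) attained.
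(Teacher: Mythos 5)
There is a genuine gap here, and it is not merely constant-chasing: applying Proposition~\ref{t:circular} directly to $g=f$ cannot yield the distance bound. The facts you extract about the pair $(u,v)$ --- the slope condition $f(u)-f(v)>(1-\varepsilon')d(u,v)$, the metric inequality $(1-\varepsilon')(d(x,y)+d(u,v))\le d(x,u)+d(y,v)$, $\norm{f}_L\le 1$ and $f(x)-f(y)>(1-\varepsilon)d(x,y)$ --- only produce \emph{lower} bounds on $d(x,u)+d(y,v)$: your telescoping identity gives $d(x,u)+d(y,v)\ge (f(x)-f(u))+(f(v)-f(y))>(1-\varepsilon)d(x,y)-d(u,v)$, and the triangle inequality gives $d(x,u)+d(y,v)\ge d(x,y)-d(u,v)$. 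The step ``$d(x,u)+d(y,v)$ is close to $d(x,y)-d(u,v)$'' requires the \emph{upper} bound $d(x,u)+d(y,v)\le d(x,y)-d(u,v)+(\text{error})$, i.e.\ that $u$ and $v$ lie near the metric segment $[x,y]$, and nothing in your data forces this. Concretely, take $M=\Real$, $f=\mathrm{id}$, $x=0$, $y=-1$ and $(u,v)=(n+1,n)$: the slope of $f$ at $(u,v)$ equals $1$, inequality \eqref{carametridauga} holds for all pairs from $N=\{0,-1\}$, yet $d(u,v)=d(x,y)$, defeating the conclusion for every small $\varepsilon$. The norming part of Proposition~\ref{t:circular}~(iii) only guarantees \emph{some} admissible pair with good slope for $f$; it gives you no way to pick one close to $(x,y)$.

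The missing idea --- the actual content of the paper's proof --- is to apply the Daugavet machinery not to $f$ but to the average $g=\tfrac14(f_1+f_2+f_3+f_4)$ with $f_1=f$, $f_2=d(y,\cdot)$, $f_3=-d(x,\cdot)$ and $f_4=f_{xy}$, each of which has slope at least $1-\varepsilon$ at $(x,y)$, so that $\norm{g}_L>1-\varepsilon/4$. A pair $(u,v)$ with $g(u)-g(v)>(1-\varepsilon/4)d(u,v)$ must then have slope $>1-\varepsilon$ for \emph{each} $f_i$ separately, since each individual slope is at most $1$. The condition for $f_4=f_{xy}$ is exactly what supplies the missing upper bound: by Lemma~\ref{lemma:IKWfunction} it forces $(1-\varepsilon)\max\{d(x,u)+d(y,u),\,d(x,v)+d(y,v)\}<d(x,y)$, i.e.\ $u,v$ are nearly metrically between $x$ and $y$; the conditions for $f_2,f_3$ fix the orientation ($u$ near $x$, $v$ near $y$); and only then does combining with \eqref{carametridauga} close the chain of inequalities and give $d(u,v)<\frac{\varepsilon}{(1-\varepsilon)^2}d(x,y)$. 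The condition for $f_1=f$ is the slope statement that survives into the conclusion. Without these auxiliary functions your argument stalls at a one-sided estimate.
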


\begin{proof}
Let us consider the following functions:
\[ f_1 = f, f_2(t)=d(y,t), f_3(t)=-d(x,t), f_4(t)=f_{xy}(t) \]
We have $f_1(x)-f_1(y)> (1-\varepsilon)d(x,y)$ and $f_i(x)-f_i(y)=d(x,y)$ for $i=2,3,4$. Moreover, clearly $\norm{f_i}_{L}=1$ for $i=1,2,3$, and $\norm{f_4}_{L}=1$ as a consequence of Lemma~\ref{lemma:IKWfunction}. Consider the function $g=\frac{1}{4}\sum_{i=1}^4 f_i$. First notice that 
\[ 1\geq \norm{g}_{L}\geq \frac{1}{4} \sum_{i=1}^4 \frac{f_i(x)-f_i(y)}{d(x,y)} > 1-\frac{\varepsilon}{4}. \]
Now, the characterization given in Proposition~\ref{t:circular} provides $u,v$ in $M$ such that 
\begin{equation}\label{eq:e}(1-\varepsilon)(d(x,y)+d(u,v))\leq \min\{d(x,u)+d(y,v),d(x,v)+d(y,u)\}
\end{equation}
and $g(u)-g(v) > (1-\frac{\varepsilon}{4})d(u,v)$, that is, 
\[ \frac{1}{4}\sum_{i=1}^4 (f_i(u)-f_i(v)) > \left(1-\frac{\varepsilon}{4}\right )d(u,v) \]
Notice that each of these summands is less or equal than $d(u,v)$. Thus, we get
\begin{equation*}\label{eq:3.4}
\min\{f_i(u)-f_i(v):i\in\{1,2,3,4\}\}>(1-\varepsilon)d(u,v)
\end{equation*}
The case $i=1$ gives us $f(u)-f(v)> (1-\varepsilon)d(u,v)$. Moreover, the cases $i=2,3$ yield 
\begin{equation}\label{eq:b}\min\{d(y,u)-d(y,v),d(x,v)-d(x,u)\}>(1-\varepsilon)d(u,v).
\end{equation}
By Lemma~\ref{lemma:IKWfunction} and the case $i=4$ we have 
\begin{equation}\label{eq:d}
(1-\varepsilon)\max\{d(x,v)+d(y,v),d(x,u)+d(y,u)\}< d(x,y).
\end{equation}
The above inequalities yield
\begin{align*}
\frac{d(x,y)}{1-\varepsilon}&\mathop{>}\limits^{\mbox{(\ref{eq:d})}} d(x,u)+d(y,u)\\ 
&\mathop{>}\limits^{\mbox{(\ref{eq:b})}} d(x,u)+d(y,v)+(1-\varepsilon)d(u,v)\\
&\mathop{\geq}\limits^{\mbox{(\ref{eq:e})}} (1-\varepsilon)(d(x,y)+d(u,v))+(1-\varepsilon)d(u,v)
\end{align*}
and so 
\[2(1-\varepsilon)d(u,v)< \left(\frac{1}{1-\varepsilon}-(1-\varepsilon)\right)d(x,y) = \frac{\varepsilon(2-\varepsilon)}{1-\varepsilon}d(x,y) < \frac{2\varepsilon}{1-\varepsilon}d(x,y) \] 
as desired.
\end{proof}

\begin{proof}[Proof of Theorem~\ref{caracolocal}]
\ref{caracolocal1}$\Rightarrow$\ref{caracolocal2} was proved in~\cite[Theorem 3.1]{ikw}, but let us include a sketch of the proof for completeness. So assume that $M$ is a length space. Then by Proposition~\ref{prop:charlengthspace} $M$ is spreadingly local. In order to prove that $\Lip(M)$ has the Daugavet property we will apply Theorem~\ref{caragendauga} (\ref{caragendauga3}), so we will prove that, for each $f,g\in S_{\Lip(M)}$ and every $\varepsilon>0$ we have that
\[g\in \clco\left\{u\in (1+\varepsilon)B_{\Lip(M)}: \Vert f+u\Vert>2-\varepsilon\right\}.\]
Fix $n\in\mathbb N$. Since $M$ is spreadingly local we can find $r>0$ and $\delta_0>0$ such that, for every $0<\delta<\delta_0$, there are $x_1,y_1,\ldots, x_n,y_n\in M$ such that $d(x_i,y_i)<\delta$, $\frac{f(x_i)-f(y_i)}{d(x_i,y_i)}>1-\varepsilon$ holds for each $i$ and such that $B(x_i,r)\cap B(x_j,r)=\emptyset$ for all $i\neq j$. Now, for every $i\in\{1,\ldots, n\}$ and for $\delta$ small enough, we can define a $(1+\varepsilon)$-Lipschitz function $f_i:M\longrightarrow \mathbb R$ such that $f_i=f$ in $\{x_i,y_i\}$ and $f_i=g$ in $M\setminus B(x_i,r)$. Since $f_i(x_i)-f_i(y_i)=f(x_i)-f(y_i)$ for every $i$ we deduce that
\[f_i\in \left\{u\in (1+\varepsilon)B_{\Lip(M)}: \Vert f+u\Vert>2-\varepsilon\right\}\]
holds for every $i\in\{1,\ldots, n\}$. On the other hand notice that, given $x\in M$, the set $\{i\in \{1,\ldots, n\}: f_i(x)\neq g(x)\}$ is, at most, a singleton. From the definition of the Lipschitz norm we deduce that
\[\left\Vert g-\frac{1}{n}\sum_{i=1}^n f_i\right\Vert_L\leq \frac{4+2\varepsilon}{n}.\]
Since $n$ was arbitrary we can conclude that
\[ g\in \clco\left(\left\{u\in (1+\varepsilon)B_{\Lip(M)}: \Vert f+u\Vert>2-\varepsilon\right\}\right)\] 
as desired.

\ref{caracolocal2}$\Rightarrow$\ref{caracolocal3} follows since the Daugavet property passes to preduals. 

\ref{caracolocal3}$\Rightarrow$\ref{caracolocal1}. Assume that $\mathcal F(M)$ has the Daugavet property and let us prove that $M$ is a length space. 
By Proposition~\ref{prop:charlengthspace} it is enough to show that $M$ is local.

To this end, let $0<\varepsilon<\frac{1}{4}$ and $f\in S_{\Lip(M)}$ be given. Pick $x\neq y\in M$ such that $\frac{f(x)-f(y)}{d(x,y)}>1-\varepsilon$. From Lemma~\ref{lemalocal} we can find $x_1\neq y_1\in M$ such that $\frac{f(x_1)-f(y_1)}{d(x_1,y_1)}>1-\varepsilon$ and that $d(x_1,y_1)<\frac{\varepsilon}{(1-\varepsilon)^2}d(x,y)$. A new application of Lemma~\ref{lemanormante} yields the existence of $x_2\neq y_2\in M$ such that $\frac{f(x_2)-f(y_2)}{d(x_2,y_2)}>1-\varepsilon$ and that
\[d(x_2,y_2)\leq \frac{\varepsilon}{(1-\varepsilon)^2} d(x_1,y_1)<\left(\frac{\varepsilon}{(1-\varepsilon)^2}\right)^2 d(x,y).\]
Continuing in this fashion we get a pair of sequences $\{x_n\}, \{y_n\}$ in $M$ such that $\frac{f(x_n)-f(y_n)}{d(x_n,y_n)}>1-\varepsilon$ and that
\[d(x_n,y_n)<\left(\frac{\varepsilon}{(1-\varepsilon)^2}\right)^n d(x,y)\]
holds for each $n\in\mathbb N$. Thus $M$ is local as desired. 
\end{proof}

\begin{remark} According to \cite[Definition III.1.1]{hww}, a Banach space $X$ is said to be \emph{$L$-embedded} if $X^{**}=X\oplus_1 Z$ for some Banach space $Z\subseteq X^{**}$. In~\cite[Theorem 3.4]{rueda} it is proved that a separable $L$-embedded space $X$ enjoys the Daugavet property if, and only if, so does its topological dual $X^*$.

Theorem~\ref{caracolocal} says that free spaces also behave this way. However, notice that $\mathcal F(M)$ is not in general an $L$-embedded space. Indeed, it follows from~\cite{GK} that for example $\mathcal F(c_0)$ is not even complemented in its bidual.
\end{remark}

\begin{remark}
The proof of~\ref{caracolocal1}$\Rightarrow$\ref{caracolocal2} in Theorem~\ref{caracolocal} actually shows that $\Lip(M)$ satisfies a stronger version of the Daugavet property whenever $M$ is a complete length space. Let us introduce some notation, coming from~\cite{BKSW05}. Given $A\subset X$, we denote by $\conv_n(A)$ the set of all convex combinations of $n$ elements of $A$. Given $x\in S_X$ and $\varepsilon>0$, we denote 
\[
 l^+(x,\varepsilon) = \{y\in (1+\varepsilon)B_X : ||x+y||>2-\varepsilon\}.
\]
The space $X$ is said to have the \emph{uniform Daugavet property} if 
\[\lim_{n\to\infty} \sup_{x,y\in S_X} d(y, \conv_n(l^+(x,\varepsilon)) = 0\]
for every $\varepsilon>0$. In~\cite{BKSW05} is proved that $X$ has the uniform Daugavet property if and only if the ultrapower $X_{\mathcal U}$ has Daugavet property for every free ultrafilter $\mathcal U$ on $\mathbb N$. They also showed that $C(K)$ with $K$ perfect and $L_1[0,1]$ have the uniform Daugavet property. Moreover, Becerra and Martin proved in~\cite{BM06} that the Daugavet and the uniform Daugavet properties are equivalent for Lindenstrauss spaces. That is also the case for spaces of Lipschitz functions. Indeed, the proof of~\ref{caracolocal1}$\Rightarrow$\ref{caracolocal2} in Theorem~\ref{caracolocal} yields that, given $f,g\in S_{\Lip(M)}$, $n\in \mathbb N$ and $\varepsilon>0$, we have
\[ d(g, \conv_n(l^+(f,\varepsilon)) \leq \frac{4+2\varepsilon}{n} \]
which goes to $0$ as $n\to \infty$. As a consequence, we get that  $\Lip(M)$ has the Daugavet property if and only if the ultrapower $\Lip(M)_{\mathcal U}$ has the Daugavet property for every free ultrafilter $\mathcal U$ on $\mathbb N$. 
\end{remark}

\section{Extremal structure of the free spaces with Daugavet property}\label{sec:exposed}

Recall that, given a Banach space $X$, a point $x\in S_X$ is said to be a \emph{strongly exposed point} of $B_X$ if there is $f\in S_{X^*}$ such that every sequence $\{x_n\}$ in $B_X$ with $\lim_n f(x_n)=f(x)$ is norm convergent to $x$. Equivalently, the slices of $B_X$ given by $f$ form a neighbourhood basis for $x$ in $B_X$ in the norm topology. In such a case we say that the functional $f$ \emph{strongly exposes} the point $x$. The set of all strongly exposed points of $\closedball{X}$ will be denoted $\strexp{\closedball{X}}$.

In what follows we will first characterise the strongly exposed points of $\closedball{\Free(M)}$ which will allow us to characterise the metric spaces $M$ such that the unit ball of the free space $\Free(M)$ has a strongly exposed point.
In a general Banach space $X$ the property that $\strexp{\closedball{X}}\neq \emptyset$ is extremely opposite to the Daugavet property. 
Our results below yield in particular that for example in the class of free spaces of compact metric spaces these properties are plainly complementary.

For starters, let us reduce the set of possible candidates for a strongly extreme point in $\closedball{\Free(M)}$.
\begin{lemma}
Let $M$ be a pointed metric space, then 
\[
\strexp{\closedball{\Free(M)}} \subset \set{\frac{\delta_x-\delta_y}{d(x,y)}:x\neq y \in M}.
\]
\end{lemma}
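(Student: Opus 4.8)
The plan is to show that any strongly exposed point $\mu$ of $\closedball{\Free(M)}$ must be a \emph{molecule} $m_{xy} := \frac{\delta_x-\delta_y}{d(x,y)}$. The natural strategy is a contrapositive/extremal-structure argument: if $\mu$ is strongly exposed, it is in particular an extreme point of $\closedball{\Free(M)}$, and one uses the known description of the extreme points (or a direct ad hoc argument) to locate $\mu$ among the molecules. First I would recall that a strongly exposed point is a denting point, hence a point of $\closedball{\Free(M)}$ admitting slices of arbitrarily small diameter; in particular it is both extreme and \emph{preserved} under small perturbations in the sense that it cannot be written as a nontrivial midpoint of two elements of the ball. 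The key classical input is that $\ext \closedball{\Free(M)} \subseteq \set{m_{xy} : x\neq y \in M}$ — this is a result of Weaver (and the preserved/strongly exposed refinement is due to García-Lirola–Petitjean–Procházka–Rueda Zoca). So the cleanest route is: (1) $\mu$ strongly exposed $\Rightarrow$ $\mu$ extreme; (2) invoke the cited description of extreme points to conclude $\mu = m_{xy}$ for some $x\neq y$.

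If instead one wants a self-contained argument, the plan is as follows. Let $f\in S_{\Lip(M)}$ strongly expose $\mu$ at value $1$. For a finitely supported measure this is easy, but $\mu$ need not be finitely supported, so the first step is to approximate $\mu$ in norm by a finitely supported $\nu$; since molecules are norm-dense in the extreme points' closure and the slices of $f$ shrink to $\mu$, one can find pairs $(u_n,v_n)$ with $m_{u_nv_n}$ in ever-smaller slices $S(\closedball{\Free(M)}, f, \alpha_n)$, because the set of molecules is norming for $\Lip(M)$ and hence $\overline{\conv}\set{m_{xy}} = \closedball{\Free(M)}$, so every slice contains a molecule. Then $f(m_{u_nv_n}) \to 1$, i.e. $\frac{f(u_n)-f(v_n)}{d(u_n,v_n)}\to 1$, and by the strong exposedness $m_{u_nv_n}\to \mu$ in norm. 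The remaining step is to show that a norm-convergent sequence of molecules converges to a molecule: this is where one must rule out the degenerate possibilities $d(u_n,v_n)\to 0$ with $u_n,v_n\to$ some common point (which would force the limit to be $0$, not a norm-one point) and $d(u_n,v_n)\to\infty$ or the points escaping to infinity. Using $\norm{m_{u_nv_n} - m_{u_mv_m}}\to 0$ together with testing against well-chosen Lipschitz functions (distance functions to $u_n$, and the functions $f_{xy}$ from Lemma~\ref{lemma:IKWfunction}) one extracts that, up to relabelling, $d(u_n,u_m)$ and $d(v_n,v_m)$ are small and $d(u_n,v_n)$ stays bounded away from $0$ and $\infty$; completeness of $M$ then gives limits $x = \lim u_n$, $y=\lim v_n$ with $x\neq y$ and $\mu = m_{xy}$.

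The main obstacle I expect is precisely the last step — proving that the norm limit of a sequence of molecules, when it is a norm-one element, is again a molecule, i.e. controlling the behaviour of the underlying point-pairs. This requires a quantitative lower bound for $\norm{m_{uv} - m_{u'v'}}$ in terms of the metric displacements of the endpoints and the ratio of the distances $d(u,v)$, $d(u',v')$; producing such a bound (or, more economically, producing enough separating Lipschitz test functions to pin down the limiting pair) is the technical heart. Given that the cited works already record that $\strexp \closedball{\Free(M)} \subseteq \set{m_{xy}}$, in the write-up I would lean on that citation for the delicate part and keep the present lemma's proof to the short reduction ``strongly exposed $\Rightarrow$ extreme $\Rightarrow$ molecule'', noting that it also follows from the fact that the molecules are norming and hence their closed convex hull is the whole ball.
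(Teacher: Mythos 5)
Your overall strategy is the paper's: reduce strong exposedness to an extremal-structure statement and quote the known description of such points in $\closedball{\Free(M)}$. But the specific reduction you propose to write up, ``strongly exposed $\Rightarrow$ extreme $\Rightarrow$ molecule'', rests on a claim that is not available: the inclusion $\ext{\closedball{\Free(M)}}\subseteq\set{\frac{\delta_x-\delta_y}{d(x,y)}: x\neq y}$ is \emph{not} a classical result of Weaver; at the time of this paper it was a well-known open problem (only settled later, and by much harder arguments). What Weaver's Corollary~2.5.4 actually gives is the molecule description of the \emph{preserved} extreme points, i.e.\ of $\ext{B_{\Lip(M)^*}}\cap\Free(M)$ -- extreme points of the bigger ball $B_{\Lip(M)^*}$, not merely of $B_{\Free(M)}$. (Your gloss of ``preserved'' as ``cannot be written as a nontrivial midpoint of two elements of the ball'' is also off: that is just extremality; preserved means extreme in the bidual ball.) So ``extreme'' is too weak an intermediate property to invoke the citation, and the step as written is a gap. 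The repair is small and is exactly what the paper does: a strongly exposed point is denting (its exposing slices form a norm-neighbourhood basis), and denting points are preserved extreme points (\cite[Proposition 9.1]{GMZ14}); then Weaver applies. Since your first paragraph already mentions denting and the preserved refinement, you have the right ingredients -- you just need to route the argument through ``preserved extreme'' rather than ``extreme''.

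Your alternative self-contained sketch (every slice of the exposing functional contains a molecule because molecules are norming, hence a sequence of molecules converges in norm to $\mu$, hence $\mu$ is a molecule) is a genuinely different and viable route, but as you acknowledge, the final step -- that a norm-one, norm-limit of molecules is a molecule -- is the entire technical content and is left unproved; one must rule out $d(u_n,v_n)\to 0$ and control the endpoints with explicit test functions such as $f_{xy}$. As it stands, neither route in the proposal is complete.
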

\begin{proof}
Assume that $\mu\in \strexp{B_{\Free(M)}}$.  
The slices of $\closedball{\Free(M)}$ determined by the strongly exposing functional form a neighborhood basis of $\mu$ in $\closedball{\Free(M)}$ equipped with the norm topology.
By~\cite[Proposition 9.1]{GMZ14}, this condition implies that $\mu$ is a \emph{preserved extreme point}, i.e. $\mu \in \ext{B_{\Lip(M)^*}}$.
Hence~\cite[Corollary 2.5.4]{wea} yields that $\mu=\frac{\delta_x-\delta_y}{d(x,y)}$ for some $x,y\in M$ with $x\neq y$.
\end{proof}

Let us introduce a bit of notation which will play a central role in the sequel.

\begin{definition}
Let $x\neq y \in M$. 
A function $f\in \justLip(M)$ is \emph{peaking at $(x,y)$} if $\frac{f(x)-f(y)}{d(x,y)}=1$ and for every open set $U$ of $M^2\setminus\set{(x,x):x\in M}$ containing $(x,y)$ and $(y,x)$, there exists $\delta >0$ such that the condition $(z,t)\notin U$ implies $\frac{|f(z)-f(t)|}{d(z,t)}\leq 1-\delta.$
\end{definition} 
This definition is equivalent to: $\frac{f(x)-f(y)}{d(x,y)}=1$ and if $\{u_n\}, \{v_n\}\subset M$, then 
\[
\lim\limits_{n\rightarrow +\infty}\frac{f(u_n)-f(v_n)}{d(u_n,v_n)}=1\Rightarrow \lim\limits_{n\rightarrow +\infty} u_n=x \textrm{\  and }	\lim\limits_{n\rightarrow +\infty} v_n=y.\]
We will say that $(x,y) \in M^2$ is a \emph{peak couple} if there is a function peaking at $(x,y)$.
 
Moreover in~\cite[Proposition 2.4.2]{wea} it is proved that if a pair of points $(x,y)$ is a peak couple then $\frac{\delta_x-\delta_y}{d(x,y)}$ is a \emph{preserved extreme point}, that is, an extreme point of $B_{\Lip(M)^*}$. Below we will give an alternative proof of this fact, showing first that every peak couple corresponds to a strongly exposed point of $B_{\mathcal F(M)}$. 

In~\cite[Proposition 2]{dkp} a characterization of peak couples $(x,y)\in M^2$ is given when $M$ is a subset of an $\Real$-tree.
We generalise this characterisation to an arbitrary metric space $M$.
We shall need the following classical notation.
Given $x,y,z \in M$ the \emph{Gromov product of $x$ and $y$ at $z$} is defined as
 \[
  (x,y)_z:=\frac12(d(x,z)+d(y,z)-d(x,y))\geq 0.
 \]
It corresponds to the distance of $z$ to the unique closest point $b$ on the unique geodesic between $x$ and $y$ in any $\Real$-tree into which $\set{x,y,z}$ can be isometrically embedded (such a tree, tripod really, always exists). Notice that $(x,z)_y+(y,z)_x=d(x,y)$ and that $(x,y)_z\leq d(x,z)$ which we will use without further comment.

\begin{definition}
We say that a pair $(x,y)$ of points in $M$, $x\neq y$ satisfies the property (Z) if for every $\varepsilon>0$ there is $z\in M\setminus\{x,y\}$ such that $(x,y)_z\leq \varepsilon\min\{d(x,z),d(y,z)\}$. 
\end{definition}
Clearly, $M$ has the property (Z) (see Definition~\ref{d:propertyZ}) if, and only if, each pair of distinct points in $M$ has the property (Z).

We are now ready to give the characterisation of strongly exposed points in $\closedball{\mathcal F(M)}$ involving all the concepts introduced above.

\begin{theorem}\label{th:charstrexp} Let $x,y\in M$, $x\neq y$. The following assertions are equivalent:
\begin{enumerate}[(i)]
\item\label{charstrexp1} $\frac{\delta_x-\delta_y}{d(x,y)}$ is a strongly exposed point of $B_{\mathcal F(M)}$.
\item\label{charstrexp2} There is $f \in \Lip(M)$ peaking at $(x,y)$, i.e. $(x,y)$ is a peak couple.
\item\label{charstrexp3} 
For every $\varepsilon>0$ 
\begin{equation}\label{e:tang2}
    \inf_{u\in M\setminus(\set{x}\cup B(y,\varepsilon))}
\frac{(y,x)_{u}}{(u,y)_x}>0 \mbox{ and } \inf_{u\in M\setminus(\set{y}\cup B(x,\varepsilon))}
\frac{(y,x)_{u}}{(u,x)_y}>0 
\end{equation}
(with the convention that  $\frac{\alpha}{0}=+\infty$).
\item\label{charstrexp4} The pair $(x,y)$ does not have the property (Z). 
\end{enumerate}
\end{theorem}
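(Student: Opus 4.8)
The plan is to prove Theorem~\ref{th:charstrexp} by establishing the cycle \ref{charstrexp4}$\Rightarrow$\ref{charstrexp3}$\Rightarrow$\ref{charstrexp2}$\Rightarrow$\ref{charstrexp1}$\Rightarrow$\ref{charstrexp4}, translating the geometric content through the Gromov product at each step. The equivalence \ref{charstrexp3}$\Leftrightarrow$\ref{charstrexp4} should be essentially a reformulation: negating property (Z) for the pair $(x,y)$ means that there is no $z$ with $(x,y)_z$ arbitrarily small relative to $\min\set{d(x,z),d(y,z)}$, and splitting the candidates $z$ according to whether $z$ is bounded away from $y$ or from $x$ gives precisely the two infima in~\eqref{e:tang2}; here one uses the identity $(x,z)_y+(y,z)_x=d(x,y)$ together with $(x,y)_z\leq\min\set{d(x,z),d(y,z)}$ to pass between $\min\set{d(x,z),d(y,z)}$ and the denominators $(u,y)_x$, $(u,x)_y$ appearing in the statement. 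I would be a little careful with the degenerate case where $z$ lies on a metric segment $[x,y]$ (so $(x,y)_z=0$), which is exactly why the convention $\frac{\alpha}{0}=+\infty$ is imposed.

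For \ref{charstrexp3}$\Rightarrow$\ref{charstrexp2} I would build the peaking function directly from the Gromov products. A natural candidate is a suitable modification of $f_{xy}$ from Lemma~\ref{lemma:IKWfunction}, or more simply a function comparable to $t\mapsto \tfrac12(d(t,y)-d(t,x))$; in any case, for such $f$ one has $\frac{f(x)-f(y)}{d(x,y)}=1$, and the slope $\frac{f(u)-f(v)}{d(u,v)}$ being close to $1$ forces both $(u,v)$ and $(v,u)$ into a small neighbourhood of $[x,y]$-like position, after which the two quantitative bounds in~\eqref{e:tang2} are exactly what is needed to push $u$ near $x$ and $v$ near $y$. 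The point is that~\eqref{e:tang2} provides, for each $\varepsilon$, a uniform $\delta(\varepsilon)>0$ such that slope $>1-\delta(\varepsilon)$ implies $u\in B(x,\varepsilon)$ and $v\in B(y,\varepsilon)$, which is precisely the sequential characterisation of peaking stated after the definition. I expect this to be the main obstacle: one must choose the right $f$ and carry out the (somewhat delicate) estimate relating the slope deficit $1-\frac{f(u)-f(v)}{d(u,v)}$ to the Gromov-product ratios, possibly by first treating the case $u=x$ (or $v=y$) and then perturbing, and one has to handle the non-compact situation where minimising sequences need not converge.

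For \ref{charstrexp2}$\Rightarrow$\ref{charstrexp1}, given $f$ peaking at $(x,y)$ I claim that $f$ itself strongly exposes $\frac{\delta_x-\delta_y}{d(x,y)}$. If $\mu_n\in B_{\Free(M)}$ satisfies $\duality{f,\mu_n}\to 1$, I would first reduce to finitely supported $\mu_n$, write each as a convex-combination-with-signs over "elementary molecules" $\frac{\delta_{u}-\delta_{v}}{d(u,v)}$ using the standard representation of finitely supported elements of $\Free(M)$ as optimal "transportation" cycles (or invoke the De Leeuw / Weaver machinery), and observe that $\duality{f,\mu_n}\to 1$ with $\norm{f}_L=1$ forces the mass of $\mu_n$ to concentrate on molecules whose slope tends to $1$; the peaking property then says these molecules are uniformly close to $\frac{\delta_x-\delta_y}{d(x,y)}$, whence $\mu_n\to\frac{\delta_x-\delta_y}{d(x,y)}$ in norm. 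This simultaneously reproves that $(x,y)$ a peak couple gives a preserved extreme point, as promised in the text.

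Finally \ref{charstrexp1}$\Rightarrow$\ref{charstrexp4} I would prove by contraposition: if $(x,y)$ has the property (Z), pick for each $\varepsilon>0$ a point $z=z_\varepsilon\in M\setminus\set{x,y}$ with $(x,y)_z\leq\varepsilon\min\set{d(x,z),d(y,z)}$, and form the two molecules $m_1=\frac{\delta_x-\delta_z}{d(x,z)}$ and $m_2=\frac{\delta_z-\delta_y}{d(z,y)}$; the near-additivity $d(x,z)+d(z,y)\leq d(x,y)+2\varepsilon\min\set{\cdots}$ shows that a convex combination $\lambda m_1+(1-\lambda)m_2$ (with $\lambda=\frac{d(x,z)}{d(x,z)+d(z,y)}$) lies in $B_{\Free(M)}$, is $(1+O(\varepsilon))$-close to $\frac{\delta_x-\delta_y}{d(x,y)}$ in the relevant weak sense yet stays a definite distance away in norm (since $m_1,m_2$ are genuine molecules based at $z\neq x,y$), and hence witnesses that no slice of $B_{\Free(M)}$ shrinks to $\frac{\delta_x-\delta_y}{d(x,y)}$. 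Concretely, evaluating any candidate exposing functional $g\in S_{\Lip(M)}$ at such a combination and letting $\varepsilon\to0$ shows $g$ cannot separate $\frac{\delta_x-\delta_y}{d(x,y)}$ from these points in norm, so $\frac{\delta_x-\delta_y}{d(x,y)}$ is not strongly exposed. This direction is routine once the molecule estimate is set up; the real work of the theorem is the construction in \ref{charstrexp3}$\Rightarrow$\ref{charstrexp2}.
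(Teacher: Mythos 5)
Your skeleton is sound, and two of your legs genuinely differ from the paper's: for \ref{charstrexp2}$\Rightarrow$\ref{charstrexp1} the paper avoids molecular decompositions altogether via a Smulyan-type lemma for norming sets (Lemma~\ref{lemma:normingsmulyan}), which is lighter than your concentration argument (though yours can be made to work), and your direct passage \ref{charstrexp4}$\Rightarrow$\ref{charstrexp3} (the paper instead proves \ref{charstrexp3}$\Rightarrow$\ref{charstrexp4} and closes the cycle through \ref{charstrexp2}) does go through after splitting into the cases $\min\{d(x,u),d(y,u)\}=d(x,u)$ and $=d(y,u)$. However, there is a genuine gap exactly where you flag ``the main obstacle'': you never construct the peaking function in \ref{charstrexp3}$\Rightarrow$\ref{charstrexp2}, and neither of your candidates works as stated. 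The function $t\mapsto\frac12(d(t,y)-d(t,x))$ attains slope $1$ on every pair $(u,v)$ with $v\in[u,y]$ and $u\in[x,v]$, which need not be near $(x,y)$; and $f_{xy}$ only controls $d(x,u)+d(u,y)$ and $d(x,v)+d(v,y)$ (Lemma~\ref{lemma:IKWfunction}(c)), which under $\neg$(Z) forces each of $u,v$ to be close to $x$ \emph{or} to $y$ but does not by itself exclude sequences $u_n,v_n$ \emph{both} converging to $x$ (or both to $y$) with slope tending to $1$. The paper resolves this by taking $g=\frac12(f+f_{xy})$ where $f$ is the specific truncated function of \cite[Proposition~2.8]{ikw}, whose role is precisely to force $d(x,u)<\frac{d(x,y)}{4}$ \emph{and} $d(y,v)<\frac{d(x,y)}{4}$ once the slope exceeds $1-\varepsilon_1$; only then do the $f_{xy}$-estimate and the $\neg$(Z) inequality combine to give $u_n\to x$, $v_n\to y$ quantitatively. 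Without some such two-function construction and the accompanying estimate, the central implication of the theorem is unproved.

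A smaller point on \ref{charstrexp1}$\Rightarrow$\ref{charstrexp4}: with $\lambda=\frac{d(x,z)}{d(x,z)+d(z,y)}$ your combination $\lambda m_1+(1-\lambda)m_2$ equals $\frac{d(x,y)}{d(x,z)+d(z,y)}\cdot\frac{\delta_x-\delta_y}{d(x,y)}$, so it is $O(\varepsilon)$-close to $\frac{\delta_x-\delta_y}{d(x,y)}$ in \emph{norm}; the sentence claiming it ``stays a definite distance away in norm'' is false as written. The correct conclusion is that any $g\in S_{\Lip(M)}$ with $\langle g,\frac{\delta_x-\delta_y}{d(x,y)}\rangle=1$ satisfies $1-\langle g,m_1\rangle\le\varepsilon'/\lambda\le 2\varepsilon$ and likewise for $m_2$ (using $\varepsilon'\le 2\varepsilon\min\{\lambda,1-\lambda\}$), so both molecules lie in every slice determined by $g$ of depth $2\varepsilon$, while testing against $h(t)=d(t,\{x,y\})$ shows that at least one of $m_1,m_2$ is at distance $\ge 1$ from $\frac{\delta_x-\delta_y}{d(x,y)}$; hence no such slice has small diameter. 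Your intent is clearly this, but the step needs to be stated for the individual molecules, not for their convex combination.
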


In the proof we will need the following lemma.
\begin{lemma}\label{lemma:normingsmulyan} Assume that $V\subset S_X$ is a norming subset for $X^*$. Let $v\in V$ and $f\in S_{X^*}$ be so that every sequence $\{v_n\}$ in $V$ with $\lim_n f(v_n)=f(v)$ is norm-convergent to $v$. Then $\norm{\cdot}_{X^*}$ is Fr\'echet-differentiable at $f$. Therefore, $f$ strongly exposes $v$. 
\end{lemma}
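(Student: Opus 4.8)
\textbf{Proof proposal for Lemma~\ref{lemma:normingsmulyan}.}

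The plan is to reduce the statement to the classical Šmulyan lemma, which characterises Fréchet differentiability of the norm of a Banach space at a point $f\in S_{X^*}$ by the property that every pair of sequences $(x_n),(y_n)$ in $B_X$ with $f(x_n)\to 1$ and $f(y_n)\to 1$ satisfies $\norm{x_n-y_n}\to 0$. The difference between the hypothesis and the conclusion is exactly that our control is only over sequences drawn from the norming set $V$ rather than from all of $B_X$; so the heart of the argument is a convexification step showing that control on $V$ propagates to control on $\clco(V)=B_X$.

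First I would record that, since $V$ is norming for $X^*$, a Hahn–Banach separation argument (as already used in the proof of Lemma~\ref{lemanormante}) gives $\clco(V)=B_X$. Next, the hypothesis on $v$ and $f$ says precisely that, for every $\varepsilon>0$, there is $\alpha>0$ with $V\cap S(B_X,f,\alpha)\subset B(v,\varepsilon)$, i.e. the slice $S(B_X,f,\alpha)$ meets $V$ only near $v$. I would then pass to convex combinations: if $x\in S(B_X,f,\alpha')$ for a suitably smaller $\alpha'>0$, write $x$ as a near-convex-combination $x\approx\sum_i \lambda_i v_i$ with $v_i\in V$; since $f(x)>1-\alpha'$ and $f(v_i)\le 1$, a standard averaging estimate forces all but a small-weight part of the $v_i$ to lie in $S(B_X,f,\alpha)$, hence in $B(v,\varepsilon)$, and therefore $x\in B(v,2\varepsilon)$ (say). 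This shows the slices of $B_X$ determined by $f$ form a neighbourhood basis of $v$ in the norm topology, i.e. $f$ strongly exposes $v$; in particular $\diam S(B_X,f,\alpha')\to 0$ as $\alpha'\to0$, which is one of the equivalent forms of the Šmulyan condition, giving Fréchet differentiability of $\norm{\cdot}_{X^*}$ at $f$.

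I expect the main obstacle to be making the convex-combination averaging rigorous with the right dependence of constants, in particular handling the ``small-weight tail'' of indices $i$ with $v_i\notin S(B_X,f,\alpha)$: one must choose $\alpha'$ small enough (roughly $\alpha'\ll \varepsilon\alpha$) so that the total weight $\sum_{i:v_i\notin S}\lambda_i$ is forced to be $O(\varepsilon)$, and then bound $\norm{x-v}$ by splitting the sum and using $\operatorname{diam} B_X\le 2$ on the tail. This is routine but needs care; alternatively, one can cite the standard fact (e.g.\ as in the references on the Daugavet property, or \cite{Megginson}) that a functional whose slices shrink to a point on a norming set has shrinking slices on the whole ball, which packages exactly this computation. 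Either way the conclusion ``$f$ strongly exposes $v$'' follows once the neighbourhood-basis property is established.
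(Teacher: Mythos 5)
Your argument is correct. Note that the paper does not actually supply a proof here: it only remarks that the lemma is ``a slight modification of the original Smulyan's lemma'' and leaves it to the reader, so your job was to fill a deliberate gap, and your convexification does so. The key estimate works exactly as you predict: if $x\in S(B_X,f,\alpha')$ and $\norm{x-\sum_i\lambda_i v_i}<\delta$ with $v_i\in V$, then $1-\alpha'-\delta<\sum_i\lambda_i f(v_i)\le 1-\alpha\sum_{i\in J}\lambda_i$ where $J=\{i: f(v_i)\le 1-\alpha\}$, so $\sum_{i\in J}\lambda_i\le(\alpha'+\delta)/\alpha$, and splitting $\norm{\sum_i\lambda_i(v_i-v)}$ over $J$ and its complement gives $\norm{x-v}\le \varepsilon+2(\alpha'+\delta)/\alpha+\delta\le 3\varepsilon$ once $\alpha',\delta\le\varepsilon\alpha/4$. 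Thus the slices of $B_X$ determined by $f$ shrink to $v$, which simultaneously gives the strong exposing and (via the diameter form of Smulyan's criterion) the Fr\'echet differentiability. The route the authors presumably intend is the other natural one: run the classical Smulyan proof, choosing the almost-norming elements for $f\pm g_n$ inside $V$ rather than in $B_X$ (possible since $V$ is norming); both sequences then satisfy $f(\cdot)\to 1$, hence converge to $v$ by hypothesis, contradicting $g_n(x_n-y_n)\ge(\varepsilon-o(1))\norm{g_n}$. That version avoids the averaging; yours has the advantage of exhibiting the exposing slices explicitly, which is what is actually used later in the paper.

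One caveat: your reformulation of the hypothesis as ``$V\cap S(B_X,f,\alpha)\subset B(v,\varepsilon)$ for some $\alpha>0$'' silently uses $f(v)=1$. This is not forced by the stated hypotheses (one can build a norming $V$ in $\ell_\infty^2$ and a point $v\in V$ with $f(v)=\frac12$ for which the sequential condition holds because the value $f(v)$ is isolated in $f(V)$ and attained only at $v$; then $f$ certainly does not strongly expose $v$). So $f(v)=\norm{f}=1$ is an implicit hypothesis of the lemma — automatically satisfied in its application, where $f$ peaks at the pair defining $v$ — and you should state that you are assuming it before invoking the slice reformulation.
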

The classical Smulyan's lemma (see, e.g.~\cite[Theorem 1.4.(ii)]{DGZ93}) states that $f$ strongly exposes a point $x\in S_X$ if and only if $f$ is a point of Fr\'echet differentiability of the norm of $X^*$. The proof of Lemma~\ref{lemma:normingsmulyan} which is a slight modification of the original Smulyan's lemma is left to the reader.

\begin{proof}[Proof of Theorem \ref{th:charstrexp}]
\ref{charstrexp1}$\Rightarrow$\ref{charstrexp2} is clear.

\ref{charstrexp2}$\Rightarrow$\ref{charstrexp1}. Assume that there is $f\in S_{\Lip(M)}$ peaking at $(x,y)$. 
Assume that $\lim_{n\to\infty}\duality{f, \frac{\delta_{u_n}-\delta_{v_n}}{d(u_n,v_n)}} = 1$. Since $f$ peaks at $(x,y)$, we have $\lim_{n\to\infty} d(u_n,x) = \lim_{n\to\infty} d(v_n,y)=0$ and so
$\lim_{n\to\infty}\frac{\delta_{u_n}-\delta_{v_n}}{d(u_n,v_n)}=\frac{\delta_x-\delta_y}{d(x,y)}$.

Thus, recalling that $V=\left\{\frac{\delta_u-\delta_v}{d(u,v)}: u\neq v\in M\right\}$ is norming for $\Lip(M)$,  Lemma~\ref{lemma:normingsmulyan} yields that $\frac{\delta_x-\delta_y}{d(x,y)}$ is strongly exposed by $f$. 

\ref{charstrexp2}$\Rightarrow$\ref{charstrexp3}. Assume that there are $\varepsilon>0$ and a sequence $\{u_{n}\}\subset M \setminus (\set{x} \cup B(y,\varepsilon))$ such that 
\[
\lim\limits_{n\rightarrow +\infty}\frac{(y,x)_{u_n}}{(u_n,y)_x}=0.
\]
We then clearly have
\[
\lim\limits_{n\rightarrow +\infty}\frac{(y,x)_{u_n}}{d(x,u_n)}=0
\]
since $(u_n,y)_x\leq d(x,u_n)$.
Let $f\in \justLip(M)$ be such that $\norm{f}_L=1$ and $\frac{f(x)-f(y)}{d(x,y)}=1$. We may assume that $f(y)=0$ and $f(x)=d(x,y)$. 
Consider $b_n$ so that $\{x,y,u_n\}$ embeds isometrically into $\{x,y,u_n,b_n\}$. 
Notice that, if we denote $f_n$ the unique $1$-Lipschitz extension of $f\restricted_{\set{x,y,u_n}}$ to $\set{x,y,u_n,b_n}$, then $f_n(b_n)=(u_n,x)_y$ and therefore $\abs{(u_n,x)_y-f(u_n)}\leq (y,x)_{u_n}$. 
We have
\begin{align*}
f(x)-f(u_{n})&= (f(x)-(u_n,x)_y)-((u_n,x)_y-f(u_{n}))\\
&=(d(x,y)-(u_n,x)_y)-((u_n,x)_y-f(u_{n}))\\
&\geq (u_n,y)_x-(y,x)_{u_n}\\
&= d(x,u_n)-2(y,x)_{u_n}.
\end{align*}
It follows that 
\[
\lim\limits_{n\rightarrow+\infty}\frac{f(x)-f(u_{n})}{d(x,u_{n})}=1.
\]
and so $f$ is not peaking at $(x,y)$ as $(u_n)$ does not converge to $y$.

\ref{charstrexp3}$\Rightarrow$\ref{charstrexp4}. Assume that the pair $(x,y)$ has the property (Z). Then for every $n \in \mathbb N$ there is $z_n \in M\setminus\set{x,y}$ such that $(x,y)_{z_n}\leq \frac1n \min\set{d(x,z_n),d(y,z_n)}$.
Passing to a subsequence and exchanging the roles of $x$ and $y$ we may assume that $d(x,z_n)\leq d(y,z_n)$ for all $n \in \mathbb N$.
We thus have $\displaystyle\frac{(x,y)_{z_n}}{d(x,z_n)} \to 0$ and $d(y,z_n)\geq \frac12d(x,y)$.
Therefore
\[
 \inf_{u\in M\setminus(\set{x}\cup B(y,\frac12d(y,x)))} 
\frac{(y,x)_{u}}{d(x,u)}=0. \]
Now notice that
\[ \frac{(y,x)_u}{d(x,u)}\geq \frac{(y,x)_u}{2\max\{(u,y)_x, (x,y)_u\}} = \frac{(y,x)_u}{(u,y)_x}\]
whenever the term on the left-hand side is less than $\frac{1}{2}$. It follows that
\[\inf_{u\in M\setminus(\set{x}\cup B(y,\frac12d(y,x)))} 
\frac{(y,x)_{u}}{(y,u)_x}=0,
\] 
a contradiction.

\ref{charstrexp4}$\Rightarrow$\ref{charstrexp2}. By hypothesis, there is $\varepsilon_0>0$ such that
\[ d(x,z)+d(z,y)>d(x,y)+\varepsilon_0 \min\{d(x,z),d(z,y)\} \]
for every $z\in M\setminus\{x,y\}$. 
We will show that $(x,y)$ is a peak couple. 
To this end, fix $\varepsilon_1>0$ with $\frac{\varepsilon_1}{1-\varepsilon_1}<\frac{\varepsilon_0}{4}$ and let $f$ be the Lipschitz function defined in~\cite[Proposition 2.8]{ikw}, namely
\[ f(z):=\begin{cases}
\max\left\{\frac{d(x,y)}{2}-(1-\varepsilon_1)d(z,x), 0\right\} & \text{if } d(z,y)\geq d(z,x),\\
& \quad d(z,y)+(1-2\varepsilon_1)d(z,x)\geq d(x,y) \\
-\max\left\{\frac{d(x,y)}{2}-(1-\varepsilon_1)d(z,y), 0\right\} & \text{if } d(z,x)\geq d(z,y),\\
& \quad d(z,x)+(1-2\varepsilon_1)d(z,y)\geq d(x,y)
\end{cases}
\]
which is well defined and satisfies $\norm{f}_{L}=1$, $f(x)-f(y)=d(x,y)$, and 
\[ \frac{f(u)-f(v)}{d(u,v)}>1-\varepsilon_1 \text{ implies } \max\{d(x,u),d(y,v)\}<\frac{d(x,y)}{4} \]
for any $u,v\in M$, $u\neq v$ (see the proof of Proposition 2.8 in~\cite{ikw}). Now, take $g=\frac{1}{2}(f+f_{xy})$. We claim that $g$ peaks at $(x,y)$. Indeed, take sequences $\{u_n\}$ and $\{v_n\}$ in $M$ with $\lim_{n\to\infty} \frac{g(u_n)-g(v_n)}{d(u_n,v_n)}=1$. Fix $\varepsilon>0$ and take $0<\gamma<\varepsilon_1$ such that $\frac{\gamma}{1-\gamma}d(x,y)<\varepsilon_0\varepsilon$. Now, take $n_0$ such that 
\begin{equation}\label{eq:notzg} \frac{g(u_n)-g(v_n)}{d(u_n,v_n)}>1-\frac{\gamma}{4}
\end{equation}
for every $n\geq n_0$. We will show that $d(x, u_n), d(y, v_n)<\varepsilon$. First, note that~(\ref{eq:notzg}) implies that 
\[ \frac{ f(u_n) - f(v_n)}{d(u_n,v_n)} > 1-\frac{\gamma}{2}>1-\varepsilon_1 \]
and so $d(x,u_n), d(y,v_n)<\frac{d(x,y)}{4}$. Therefore $d(x,u_n)<d(y,u_n)$ and $d(y,v_n)<d(x,v_n)$. 
Moreover, it also follows from~(\ref{eq:notzg}) that
\[ \frac{f_{xy}(u_n)-f_{xy}(v_n)}{d(u_n,v_n)} > 1-\frac{\gamma}{2}>1-\gamma\] 
and so using Lemma~\ref{lemma:IKWfunction} we get $(1-\gamma)\max\{d(x,u_n)+d(y,u_n), d(x,v_n)+d(y,v_n)\}\leq d(x,y)$. This and the hypothesis imposed on the pair $(x,y)$ yield
\[
d(x,y)+\varepsilon_0 d(x, u_n) < d(x,u_n)+d(u_n,y) \leq \frac{1}{1-\gamma} d(x,y).\]
Therefore,
\[ d(x,u_n)\leq \frac{1}{\varepsilon_0}\left(\frac{1}{1-\gamma}-1\right)d(x,y) < \varepsilon \] 
for every $n\geq n_0$. Similarly, $d(y, v_n)<\varepsilon$. This shows that $\{u_n\}$ converges to $x$ and $\{v_n\}$ converges to $y$. Thus, $g$ peaks at $(x,y)$ as desired. 
\end{proof}

Note that Theorem~\ref{th:charstrexp} generalises~\cite[Proposition 2]{dkp}, where the equivalence between~\ref{charstrexp2} and~\ref{charstrexp3} is proved under the assumption that $M$ is a subset of an $\mathbb R$-tree.

Note that the proof of \ref{charstrexp2}$\Rightarrow$\ref{charstrexp1} in Theorem~\ref{th:charstrexp} actually shows that the following holds:

\begin{corollary} Let $M$ be a pointed metric space, $f\in\Lip(M)$ and $x,y\in M$, $x\neq y$. Then $f$ peaks at the pair $(x,y)$ if and only if $f$ strongly exposes $\frac{\delta_x-\delta_y}{d(x,y)}$ in $B_{\mathcal F(M)}$.
\end{corollary}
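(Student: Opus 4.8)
The plan is to observe that the Corollary is essentially a restatement of the forward direction of the equivalence \ref{charstrexp1}$\Leftrightarrow$\ref{charstrexp2} in Theorem~\ref{th:charstrexp}, made precise at the level of individual functionals rather than mere existence. One implication is immediate: if $f$ peaks at $(x,y)$, then the argument carried out in the proof of \ref{charstrexp2}$\Rightarrow$\ref{charstrexp1} shows directly that $f$ strongly exposes $\frac{\delta_x-\delta_y}{d(x,y)}$. Indeed, that argument takes an arbitrary $f\in S_{\Lip(M)}$ peaking at $(x,y)$, assumes $\duality{f,\frac{\delta_{u_n}-\delta_{v_n}}{d(u_n,v_n)}}\to 1$, deduces $d(u_n,x)\to 0$ and $d(v_n,y)\to 0$ from the peaking property, hence $\frac{\delta_{u_n}-\delta_{v_n}}{d(u_n,v_n)}\to\frac{\delta_x-\delta_y}{d(x,y)}$ in norm, and then invokes Lemma~\ref{lemma:normingsmulyan} with the norming set $V=\{\frac{\delta_u-\delta_v}{d(u,v)}:u\neq v\in M\}$ to conclude that $f$ itself is the strongly exposing functional. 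So the first direction requires no new work beyond pointing to that proof.

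For the converse, suppose $f\in\Lip(M)$ strongly exposes $\frac{\delta_x-\delta_y}{d(x,y)}$ in $B_{\mathcal F(M)}$. Normalising, $\norm{f}_L=1$ and $\duality{f,\frac{\delta_x-\delta_y}{d(x,y)}}=1$, i.e. $\frac{f(x)-f(y)}{d(x,y)}=1$. I must check the peaking condition: if $\{u_n\},\{v_n\}\subset M$ satisfy $\frac{f(u_n)-f(v_n)}{d(u_n,v_n)}\to 1$, then $u_n\to x$ and $v_n\to y$. Set $\mu_n:=\frac{\delta_{u_n}-\delta_{v_n}}{d(u_n,v_n)}\in S_{\mathcal F(M)}$; then $\duality{f,\mu_n}\to 1=\duality{f,\frac{\delta_x-\delta_y}{d(x,y)}}$, so by the definition of strongly exposed point $\mu_n\to\frac{\delta_x-\delta_y}{d(x,y)}$ in the norm of $\mathcal F(M)$. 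The remaining task is to extract from this norm convergence of the molecules $\mu_n$ the convergence $u_n\to x$, $v_n\to y$ in the metric. This is the only step requiring an argument: it does not follow from soft abstract nonsense, since a priori $\mu_n$ could be close to $\frac{\delta_x-\delta_y}{d(x,y)}$ with $u_n,v_n$ far from $x,y$ only if there is some degeneracy. The key point is that for a molecule, $\norm{\frac{\delta_{u}-\delta_{v}}{d(u,v)}-\frac{\delta_x-\delta_y}{d(x,y)}}_{\mathcal F(M)}$ small forces, by testing against suitable $1$-Lipschitz functions (for instance $z\mapsto d(z,x)$ and $z\mapsto d(z,y)$, and the function $f$ itself), that $d(u,x)$, $d(v,y)$, and $|d(u,v)-d(x,y)|$ are all small; quantitatively one can write
\[
\frac{d(u,x)+d(v,y)}{d(x,y)}\leq \Bigl\|\frac{\delta_u-\delta_v-(\delta_x-\delta_y)}{d(x,y)}\Bigr\|+\Bigl|1-\frac{d(u,v)}{d(x,y)}\Bigr|,
\]
and both terms on the right are controlled by $\norm{\mu_n-\frac{\delta_x-\delta_y}{d(x,y)}}$. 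This is precisely the computation that appears (commented out, inside the \verb|\iffalse...\fi|) in the proof of \ref{charstrexp2}$\Rightarrow$\ref{charstrexp1}; I would reinstate a short version of it here.

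The main (and really only) obstacle is this quantitative estimate relating $\norm{\mu_n-\frac{\delta_x-\delta_y}{d(x,y)}}_{\mathcal F(M)}\to 0$ to $d(u_n,x)\to 0$ and $d(v_n,y)\to 0$; everything else is bookkeeping with definitions. Since $\norm{\delta_p-\delta_q}_{\mathcal F(M)}=d(p,q)$ and Lipschitz functions separate the points of $\mathcal F(M)$ isometrically via duality, the estimate above is elementary: bound $|d(u_n,v_n)-d(x,y)|$ using the $1$-Lipschitz function $f$ (or $d(\cdot,y)$), then bound $\norm{\delta_{u_n}-\delta_{v_n}-(\delta_x-\delta_y)}$ below by $d(u_n,x)+d(v_n,y)$ minus a controlled error — or more cleanly, pair $\delta_{u_n}-\delta_x$ against $d(\cdot,x)$ after subtracting off the $\delta_{v_n}-\delta_y$ contribution. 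Assembling these gives $d(u_n,x)+d(v_n,y)\to 0$, hence $u_n\to x$ and $v_n\to y$, which is exactly the peaking property of $f$ at $(x,y)$. I would therefore present the proof as: ``\ref{charstrexp2}$\Rightarrow$\ref{charstrexp1} of Theorem~\ref{th:charstrexp} already gives one direction with the same functional; for the converse, unwind the definition of strongly exposed point applied to the molecules $\mu_n$ and use the displayed molecular estimate to recover metric convergence,'' keeping the whole thing to a few lines.
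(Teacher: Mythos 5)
Your forward direction (peaking implies strongly exposing, via the norming set of molecules and Lemma~\ref{lemma:normingsmulyan}) is exactly the paper's proof of \ref{charstrexp2}$\Rightarrow$\ref{charstrexp1}, and your reduction of the converse to the claim that $\norm{\mu_n-\frac{\delta_x-\delta_y}{d(x,y)}}\to 0$ forces $u_n\to x$ and $v_n\to y$ is also the right move. But the one step you yourself single out as ``the only step requiring an argument'' is where the proposal fails. The displayed inequality
\[
\frac{d(u,x)+d(v,y)}{d(x,y)}\leq \Bigl\|\frac{\delta_u-\delta_v-(\delta_x-\delta_y)}{d(x,y)}\Bigr\|+\Bigl|1-\frac{d(u,v)}{d(x,y)}\Bigr|
\]
is false: the computation you cite from the commented-out block is the \emph{reverse} estimate $\norm{\delta_u-\delta_v-(\delta_x-\delta_y)}\leq d(u,x)+d(v,y)$, a triangle inequality showing that point convergence implies molecule convergence --- the direction needed in the forward implication, not here. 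Concretely, for $M=\Real$, $x=0$, $y=1$, $u=10$, $v=11$ one has $d(u,x)+d(v,y)=20$ while $\norm{\delta_u-\delta_v-(\delta_x-\delta_y)}=2$ and $d(u,v)=d(x,y)$. The test functions you then propose ($d(\cdot,x)$, $d(\cdot,y)$ and $f$ itself) also do not suffice: for $x=0$, $y=1$, $u=0.4$, $v=0.6$ in $[0,1]$, all three functionals take the same values on $\frac{\delta_u-\delta_v}{d(u,v)}$ as on $\frac{\delta_x-\delta_y}{d(x,y)}$, yet $d(u,x)=0.4$ and the two molecules are at norm distance $1.6$. (Using $f$ here is in any case circular, since its peaking behaviour is what you are trying to prove.)

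The claim you need is nevertheless true, and the repair is short: for each $r\in(0,\frac{1}{2}d(x,y))$ test against the $1$-Lipschitz hat function $h_r(z)=\max\{0,\,r-d(z,x)\}$. Since $\duality{h_r,\frac{\delta_x-\delta_y}{d(x,y)}}=\frac{r}{d(x,y)}>0$, convergence of $\duality{h_r,\mu_n}$ gives $h_r(u_n)>h_r(v_n)\geq 0$ eventually, hence $d(u_n,x)<r$; the symmetric hat at $y$ gives $d(v_n,y)<r$. (Alternatively, test against $f_{xy}$ and use Lemma~\ref{lemma:IKWfunction}(c) together with the two distance functions.) With that substitution your argument is complete and matches what the paper intends when it asserts that the proof of \ref{charstrexp2}$\Rightarrow$\ref{charstrexp1} ``actually shows'' the corollary; as written, the crucial estimate is wrong.
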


In what follows we show that free spaces naturally strengthen their extremal structure. Recall that, given a Banach space $X$, a point $x\in S_X$ is said to be a \emph{weakly exposed point} of $B_X$ if there is an $f\in S_{X^*}$ such that every sequence $\{x_n\}$ in $B_X$ with $\lim_n f(x_n)=f(x)$ is weakly-convergent to $x$. Note that in that case the slices of $B_X$ given by $f$ are neighbourhood basis for $x$ in the weak topology of $B_X$. Thus, every weakly exposed point is also a preserved extreme point.  

\begin{proposition}\label{prop:wese} Let $\mu$ be weakly exposed in $B_{\Free(M)}$ by $f\in S_{\Lip(M)}$. Then $\mu$ is strongly exposed by $f$.
\end{proposition}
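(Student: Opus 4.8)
The plan is to establish the result in three moves, combining the reduction of preserved extreme points to normalised differences of Dirac masses, the characterisation of strongly exposed points from Theorem~\ref{th:charstrexp}, and a soft ``upgrading'' principle valid in any Banach space. First, since a weakly exposed point is a preserved extreme point (as noted just before the statement, via \cite[Proposition 9.1]{GMZ14}), \cite[Corollary 2.5.4]{wea} forces $\mu=\frac{\delta_x-\delta_y}{d(x,y)}$ for some $x\neq y$ in $M$; and from $\langle f,\mu\rangle=\|f\|_L=1$ we get $\frac{f(x)-f(y)}{d(x,y)}=1$, so after subtracting a constant we may assume $f(y)=0$ and $f(x)=d(x,y)$. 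This step is a verbatim repetition of the opening lemma of this section.

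The second and main move is to show that the pair $(x,y)$ does \emph{not} have property (Z); granting this, Theorem~\ref{th:charstrexp} immediately tells us that $\mu$ is a strongly exposed point of $\closedball{\Free(M)}$, say strongly exposed by some $g\in S_{\Lip(M)}$. To prove that $(x,y)$ fails property (Z) I would argue by contradiction. If it held, then for every $n$ there would be $z_n\in M\setminus\{x,y\}$ with $d(x,z_n)+d(z_n,y)\le d(x,y)+\frac1n\min\{d(x,z_n),d(z_n,y)\}$. Passing to a subsequence, and using the symmetry obtained by swapping $x\leftrightarrow y$ and $f\leftrightarrow -f$, I may assume $d(x,z_n)\le d(z_n,y)$ for all $n$, whence $d(z_n,y)\ge\varepsilon_0:=\frac12 d(x,y)>0$. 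The displayed inequality gives $d(x,y)-d(z_n,y)\ge(1-\frac1n)d(x,z_n)$, and since $f$ is $1$-Lipschitz with $f(y)=0$ this yields $f(x)-f(z_n)\ge d(x,y)-d(z_n,y)\ge(1-\frac1n)d(x,z_n)$, so that $\langle f,\nu_n\rangle\to 1=\langle f,\mu\rangle$ where $\nu_n:=\frac{\delta_x-\delta_{z_n}}{d(x,z_n)}\in\closedball{\Free(M)}$. As $f$ weakly exposes $\mu$, we get $\nu_n\to\mu$ weakly. The point is then to test this against the $1$-Lipschitz function $h(t):=\min\{d(t,y),\varepsilon_0\}$ (normalised to vanish at the basepoint): since $d(x,y)\ge\varepsilon_0$ and $d(z_n,y)\ge\varepsilon_0$ one computes $\langle h,\nu_n\rangle=0$ for every $n$, while $\langle h,\mu\rangle=\frac{\varepsilon_0}{d(x,y)}=\frac12$, contradicting weak convergence.

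The third move is the upgrading argument: given a sequence $\{\sigma_n\}$ in $\closedball{\Free(M)}$ with $\langle f,\sigma_n\rangle\to\langle f,\mu\rangle$, weak exposition by $f$ yields $\sigma_n\to\mu$ weakly, hence $\langle g,\sigma_n\rangle\to\langle g,\mu\rangle$ because $g$ is weakly continuous, and then strong exposition by $g$ forces $\sigma_n\to\mu$ in norm; thus $f$ itself strongly exposes $\mu$, as desired.

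I expect the second move to be the crux: one cannot extract any convergence of the auxiliary points $z_n$ from the weak convergence $\nu_n\to\mu$ alone, so the real work lies in exhibiting a single test functional — here the truncated distance $h$ — that separates $\mu$ from \emph{all} of the $\nu_n$ simultaneously. The first and third moves are routine, the first being already contained in this section and the third using only the definitions of weak and strong exposition together with the weak continuity of functionals.
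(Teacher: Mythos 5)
Your proof is correct, but it takes a genuinely different route from the paper's. The paper argues directly at the level of the exposing functional: it takes arbitrary sequences $\{u_n\},\{v_n\}$ with $\frac{f(u_n)-f(v_n)}{d(u_n,v_n)}\to 1$, uses weak exposedness to get $\frac{\delta_{u_n}-\delta_{v_n}}{d(u_n,v_n)}\stackrel{w}{\to}\mu$, and then invokes the Albiac--Kalton lemma (\cite[Lemma 5.1]{AK}: weakly convergent sequences of molecules converge in norm) to conclude that $f$ itself peaks at $(x,y)$, whence $f$ strongly exposes $\mu$ by (the easy implication of) Theorem~\ref{th:charstrexp}. You instead bypass Albiac--Kalton entirely: you test the weak convergence of the specific molecules $\frac{\delta_x-\delta_{z_n}}{d(x,z_n)}$ against the single truncated distance function $h(t)=\min\{d(t,y),\varepsilon_0\}$ to show that $(x,y)$ cannot have property (Z), then apply the implication \ref{charstrexp4}$\Rightarrow$\ref{charstrexp1} of Theorem~\ref{th:charstrexp} (which is the hard direction, requiring the construction of a peaking function) to get strong exposedness by \emph{some} $g$, and finish with the soft general principle that a weakly exposing functional of a strongly exposed point is automatically strongly exposing. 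All three of your steps check out (the symmetry reduction, the estimate $f(x)-f(z_n)\geq d(x,y)-d(z_n,y)\geq(1-\tfrac1n)d(x,z_n)$, and the computation $\langle h,\nu_n\rangle=0$ versus $\langle h,\mu\rangle=\tfrac12$ are all valid). What each approach buys: the paper's proof is shorter and shows directly that $f$ is the peaking functional, at the cost of importing the external weak-to-norm lemma for molecules; yours is self-contained modulo Theorem~\ref{th:charstrexp}, but leans on its hardest implication and needs the extra upgrading step to recover that $f$ itself does the exposing.
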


\begin{proof} 
First note that $\mu$ is a preserved extreme point of $B_{\mathcal F(M)}$ and so $\mu = \frac{\delta_x-\delta_y}{d(x,y)}$ for some $x,y\in M$. Now take sequences $\{u_n\}, \{v_n\}$ in $M$ such that $\frac{f(u_n)-f(v_n)}{d(u_n,v_n)}\to 1$. Since $f$ weakly exposes $\mu$ we have that $\frac{\delta_{u_n}-\delta_{v_n}}{d(u_n,v_n)}\stackrel{w}{\to}\mu$. Now, a result by Albiac and Kalton \cite[Lemma 5.1]{AK} ensures that $\{\frac{\delta_{u_n}-\delta_{v_n}}{d(u_n,v_n)}\}$ is norm-convergent to $\mu$. Thus $f$ peaks at $\mu$ and so $\mu$ is strongly exposed by $f$ by Theorem \ref{th:charstrexp}. 
\end{proof}

As a consequence of Proposition \ref{prop:wese} we get the following:

\begin{corollary} \label{cor:gatfre} Let $M$ be a pointed metric space and $f\in S_{\Lip(M)}$. 
If the norm of $\Lip(M)$ is G\^ateaux differentiable at $f$, then it is also Fr\'echet differentiable at $f$ (with  the derivative $\mu\in \Free(M)$ of the form $\mu=\frac{\delta_x-\delta_y}{d(x,y)}$). 
\end{corollary}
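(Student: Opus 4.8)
The plan is to pin down the G\^ateaux derivative of $\norm{\cdot}_{\Lip(M)}$ at $f$ --- which \emph{a priori} lives in the bidual $\Lip(M)^*=\Free(M)^{**}$ --- as a molecule $\frac{\delta_x-\delta_y}{d(x,y)}$ sitting inside $\Free(M)$, and then to bootstrap G\^ateaux differentiability to Fr\'echet differentiability through Proposition~\ref{prop:wese} and Lemma~\ref{lemma:normingsmulyan}.

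First I would use that G\^ateaux differentiability of $\norm{\cdot}_{\Lip(M)}$ at $f\in S_{\Lip(M)}$ is equivalent to the existence of a \emph{unique} $\mu\in S_{\Lip(M)^*}$ with $\duality{\mu,f}=1$, namely the derivative. This uniqueness immediately forces $\mu$ to be an extreme point of $\closedball{\Lip(M)^*}$: if $\mu=\tfrac12(\nu_1+\nu_2)$ with $\nu_i\in\closedball{\Lip(M)^*}$, then $\duality{\nu_i,f}\le 1$ and their average is $1$, so $\duality{\nu_i,f}=1$, whence $\nu_1=\nu_2=\mu$. Now, exactly as in the lemma preceding Definition~\ref{d:propertyZ} (which uses \cite[Corollary 2.5.4]{wea}), every extreme point of $\closedball{\Lip(M)^*}$ is of the form $\frac{\delta_x-\delta_y}{d(x,y)}$ for some $x\neq y\in M$; in particular $\mu\in\Free(M)$. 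This is the only genuinely delicate point: in an arbitrary Banach space the G\^ateaux derivative of a dual norm need not belong to the predual, so here one really must exploit Weaver's description of the extreme points of $\closedball{\Lip(M)^*}$.

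Next I would check that $f$ weakly exposes $\mu$ in $\closedball{\Free(M)}$. Since $\closedball{\Lip(M)^*}$ is weak$^*$-compact and $\mu$ is the \emph{only} element of $S_{\Lip(M)^*}$ on which $f$ attains the value $1$, any net in $\closedball{\Lip(M)^*}$ along which $\duality{\cdot,f}\to1$ must converge weak$^*$ to $\mu$ (every weak$^*$-cluster point of such a net is a point of $S_{\Lip(M)^*}$ where $f$ equals $1$, hence equals $\mu$, and compactness then forces convergence of the whole net). Because $\mu\in\Free(M)$ and the weak$^*$ topology of $\Lip(M)^*$ restricted to $\Free(M)$ coincides with the weak topology of $\Free(M)$, it follows that every sequence $\{\mu_n\}\subset\closedball{\Free(M)}$ with $\duality{\mu_n,f}\to\duality{\mu,f}=1$ converges weakly to $\mu$; that is, $f$ weakly exposes $\mu$.

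Finally, Proposition~\ref{prop:wese} upgrades this to: $f$ strongly exposes $\mu$ in $\closedball{\Free(M)}$, i.e.\ every sequence in $\closedball{\Free(M)}$ along which $\duality{\cdot,f}\to\duality{\mu,f}=1$ converges to $\mu$ in norm. In particular this holds for sequences drawn from the $1$-norming set $V=\set{\frac{\delta_u-\delta_v}{d(u,v)}:u\neq v\in M}\subset S_{\Free(M)}$, and $\mu\in V$. Hence Lemma~\ref{lemma:normingsmulyan}, applied with $X=\Free(M)$, $X^*=\Lip(M)$ and $v=\mu$, yields that $\norm{\cdot}_{\Lip(M)}$ is Fr\'echet differentiable at $f$; since the Fr\'echet and G\^ateaux derivatives agree whenever both exist, the derivative is $\mu=\frac{\delta_x-\delta_y}{d(x,y)}\in\Free(M)$. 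Everything after the identification $\mu\in V$ in the second paragraph is then routine assembly of results already in hand.
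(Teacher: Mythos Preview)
There is a genuine gap at the step where you deduce $\mu\in\Free(M)$. You argue that the (unique) norming functional $\mu\in S_{\Lip(M)^*}$ is an extreme point of $\closedball{\Lip(M)^*}$ and then invoke Weaver's result, via the lemma at the beginning of Section~\ref{sec:exposed}, to conclude that $\mu$ is a molecule $\frac{\delta_x-\delta_y}{d(x,y)}$ with $x,y\in M$. But that lemma, and Weaver's Corollary~2.5.4 as used in the paper, concern \emph{preserved} extreme points: one starts from $\mu\in\Free(M)$ and concludes it is a molecule. Your use reverses the logic and asserts that \emph{every} extreme point of $\closedball{\Lip(M)^*}$ lies in $\Free(M)$, which is false. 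For instance, taking $M=\Natural$ with the usual metric one has $\Lip(M)\cong\ell_\infty$, $\Free(M)\cong\ell_1$, and $\closedball{\Lip(M)^*}\cong\closedball{\ell_\infty^*}$ has plenty of extreme points (characters corresponding to free ultrafilters) lying outside $\ell_1$. So the sentence ``every extreme point of $\closedball{\Lip(M)^*}$ is of the form $\frac{\delta_x-\delta_y}{d(x,y)}$ for some $x\neq y\in M$; in particular $\mu\in\Free(M)$'' is precisely the delicate point you flag --- and it does not follow from the cited lemma.

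The paper establishes $\mu\in\Free(M)$ by a different mechanism: it first proves that if $f$ does \emph{not} attain its norm on $\closedball{\Free(M)}$ then the norm of $\Lip(M)$ cannot be G\^ateaux differentiable at $f$. The key tool is the Albiac--Kalton lemma \cite[Lemma~5.1]{AK}: taking molecules $m_{x_ny_n}$ with $\duality{f,m_{x_ny_n}}\to 1$, if every limit $\lim_n\duality{g,m_{x_ny_n}}$ existed one would get weak (hence, by Albiac--Kalton, norm) convergence in $\Free(M)$, contradicting non-attainment; so some $g$ has $\liminf\neq\limsup$, yielding two distinct Hahn--Banach extensions of the norming functional and killing G\^ateaux differentiability. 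Once norm-attainment gives $\mu\in\Free(M)$, the remainder of your argument (Smulyan for G\^ateaux, Proposition~\ref{prop:wese}, Smulyan for Fr\'echet) matches the paper's and is fine. In short: your second and third paragraphs are sound, but the first needs the paper's Albiac--Kalton argument (or an equivalent) in place of the appeal to Weaver's description of extreme points.
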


\begin{proof}
Let us show that if $f \in \Lip(M)$ does not attain its norm on $B_{\Free(M)}$, then $f$ is not a point of G\^ateaux differentiability of the norm $\norm{\cdot}_L$.
Indeed, let $\{x_n\},\{y_n\} \subset M$ be such that $\duality{f,m_{x_ny_n}} \to \norm{f}_L$. 
It is enough to show that the functional $g \mapsto \lim \duality{g,m_{x_ny_n}}$ defined on the linear span of $f$ admits two different extensions on $\Lip(M)$.

First we claim that there is a $g \in \Lip(M)$ such that $\lim \duality{g,m_{x_ny_n}}$ does not exist. 
Indeed, assume that for every $g \in \Lip(M)$ the limit exists and denote it by $\varphi(g)$.
Then $\varphi \in \Lip(M)^*$ by the uniform boundedness principle and $\norm{\varphi}\leq 1$.
Now for any two increasing sequences $\{n_k\}$ and $\{m_k\}$ of positive integers we have that $m_{x_{n_k}y_{n_k}}-m_{x_{m_k}y_{m_k}} \to 0$ weakly. 
Therefore \cite[Lemma 5.1]{AK} shows that $m_{x_{n_k}y_{n_k}}-m_{x_{m_k}y_{m_k}} \to 0$ in norm. 
So $\{m_{x_ny_n}\}$ is norm Cauchy and it follows that $\varphi \in B_{\Free(M)}$ which is a contradiction which proves our claim.

Let now $\{n_k\}$ and $\{m_k\}$ be such that $\lim \duality{g,m_{x_{n_k}y_{n_k}}}=\limsup \duality{g,m_{x_ny_n}}$ and $\lim \duality{g,m_{x_{m_k}y_{m_k}}}=\liminf \duality{g,m_{x_ny_n}}$. 
It is clear that the Hahn-Banach extensions of these limits are different and they both extend the original limit.
Thus $\norm{\cdot}_L$ is not Gateaux differentiable at the point $f$.

We now assume that the norm is G\^ateaux differentiable at $f$. 
By the previous paragraph, the unique norming functional $\mu$ belongs to $\Free(M)$.
If $\{\mu_n\}$ is a sequence in $B_{\mathcal F(M)}$ such that $\<f, \mu_n\>\to 1$ then the version of the Smulyan lemma for G\^ateux differentiability (see e.g.\cite[Theorem~1.4.(iv)]{DGZ93}) yields that $\mu_n\stackrel{w}{\to} \mu$ and so $\mu$ is  weakly exposed in $B_{\Free(M)}$ by $f$. Now apply Proposition \ref{prop:wese} and the version of Smulyan's lemma for Fr\'echet differentiability. 
\end{proof}

Finally we show that in free spaces with the Daugavet property there are no preserved extreme points.

\begin{proposition}\label{prop:amcpreserved} Let $M$ be a pointed length space. Then $B_{\mathcal F(M)}$ does not have any preserved extreme point, that is, $\ext{B_{\Lip(M)^*}}\cap \mathcal F(M) = \emptyset$.\end{proposition}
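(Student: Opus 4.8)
The plan is to prove the contrapositive in a quantitative form: if $\mu \in B_{\mathcal F(M)}$ is a preserved extreme point, then $M$ is not a length space. Since $\mu \in \ext{B_{\Lip(M)^*}}$, the lemma from \cite[Corollary 2.5.4]{wea} (already invoked in the excerpt) gives $\mu = \frac{\delta_x - \delta_y}{d(x,y)}$ for some $x \neq y \in M$. Being a preserved extreme point means that the weak$^*$-slices of $B_{\Lip(M)^*}$ containing $\mu$ form a weak$^*$-neighbourhood basis of $\mu$; equivalently, for every weak$^*$-open set, hence for every $g \in S_{\Lip(M)}$ normalised so that $g$ is a norming functional direction near $\mu$, one can run a Choquet-type argument. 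The cleanest route: use the characterisation of the Daugavet property from Proposition~\ref{t:circular}\ref{hola3} together with Theorem~\ref{caracolocal}. If $\mathcal F(M)$ had the Daugavet property we would be done by contradiction with a standard fact (a space with the Daugavet property has no preserved extreme points — this is essentially because property \ref{caragendauga2} of Theorem~\ref{caragendauga} exhibits, inside every slice, points far from any fixed unit vector, preventing the slice-neighbourhood-basis condition). So the only thing to check is precisely this standard implication, adapted to our setting.

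Concretely, I would argue as follows. Suppose toward a contradiction that $M$ is a length space and $\mu = \frac{\delta_x-\delta_y}{d(x,y)}$ is a preserved extreme point of $B_{\mathcal F(M)}$. By Theorem~\ref{caracolocal}, $\mathcal F(M)$ has the Daugavet property. Pick $f \in S_{\Lip(M)}$ with $\duality{f,\mu} = 1$ (e.g.\ any normalised Lipschitz function with $f(x)-f(y) = d(x,y)$) and consider the slice $S = S(B_{\mathcal F(M)}, f, \varepsilon)$, which is a weak$^*$-open neighbourhood of $\mu$ in $B_{\mathcal F(M)}$. Since $\mu$ is a preserved extreme point, by \cite[Proposition 9.1]{GMZ14} (or directly) the weak$^*$-slices containing $\mu$ form a weak$^*$-neighbourhood basis; in particular $\mu$ is not in the weak$^*$-closure of $B_{\mathcal F(M)} \setminus S'$ for small slices $S'$. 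On the other hand, apply Theorem~\ref{caragendauga}\ref{caragendauga2} with $x := \mu$ (a unit vector), the slice $S$, and $\varepsilon$: there is a slice $T \subseteq S$ with $\norm{\mu + \nu} > 2 - \varepsilon$ for all $\nu \in T$. Combining with Lemma~\ref{lemanormante} and the fact that $V = \{\frac{\delta_u-\delta_v}{d(u,v)} : u \neq v\}$ is norming, we get $(u,v)$ with $m_{uv} := \frac{\delta_u-\delta_v}{d(u,v)} \in T$ and $\norm{\mu + m_{uv}} > 2 - \varepsilon$. A convexity argument as in the proof of Proposition~\ref{t:circular}\ref{hola1}$\Rightarrow$\ref{hola3} shows $(1-\varepsilon)(d(x,y) + d(u,v)) < d(x,v) + d(u,y)$; by symmetry of the Daugavet condition one also gets the pair configuration forcing $m_{uv}$ to be ``metrically far'' from $\mu$. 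But $m_{uv} \in S$ while being bounded away (in norm, or at least in a suitable weak sense detected by another norming functional) from $\mu$ — contradicting that slices of the form $S$ give a neighbourhood basis of $\mu$.

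The technical heart I expect to be most delicate is making precise the phrase ``far from $\mu$'': the preserved-extreme-point property is about \emph{weak$^*$}-neighbourhoods, so it is not immediate that norm-distance $\norm{\mu + m_{uv}}$ being large produces a contradiction — one must exhibit a fixed $g \in S_{\Lip(M)}$ (independent of $\varepsilon$) with $\duality{g, m_{uv}}$ bounded away from $\duality{g,\mu}$ for all small $\varepsilon$, to contradict the weak$^*$-basis condition. Here the length-space structure helps: using Proposition~\ref{prop:charlengthspace} ($M$ local) and Lemma~\ref{lemalocal}, any slice of $B_{\mathcal F(M)}$ contains elements $m_{uv}$ with $d(u,v)$ arbitrarily small, and the inequality $(1-\varepsilon)(d(x,y) + d(u,v)) \le d(x,u)+d(y,v)$ from Proposition~\ref{t:circular} forces $u$ close to $x$ and $v$ close to $y$, whence $m_{uv}$ is in fact \emph{norm}-close to $\mu$ — contradicting instead that $\mu$ is extreme, because one simultaneously finds, using the symmetric inequality $(1-\varepsilon)(d(x,y)+d(u,v)) \le d(x,v)+d(y,u)$, a second pair on the ``other side'', exhibiting $\mu$ as a nontrivial midpoint $\mu = \frac12(m_{uv} + m_{u'v'})$ with $m_{uv} \neq m_{u'v'}$.

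Alternatively — and this is probably the slickest presentation — I would sidestep the topology issue entirely and prove that in \emph{any} Banach space $X$ with the Daugavet property, $B_X$ has no preserved extreme points: if $x_0 \in \ext{B_{X^{**}}} \cap X$ were one, then the weak$^*$-slices of $B_{X^{**}}$ at $x_0$ form a neighbourhood basis, so for suitable $f \in S_{X^*}$ the slices $S(B_X, f, \alpha)$ shrink to $\{x_0\}$ in a controlled way; but Theorem~\ref{caragendauga}\ref{caragendauga2} produces, inside each $S(B_X,f,\alpha)$, a point $y$ with $\norm{x_0 + y} > 2 - \alpha$, i.e.\ $y$ close to $-x_0$, hence $\frac12(x_0 + y)$ close to $0$, so $0$ would lie in the weak$^*$-closure of every slice at $x_0$ — impossible since slices at an extreme point separate $x_0$ from $0$ once $\alpha < 1$. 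Applying this to $X = \mathcal F(M)$ (valid since $\mathcal F(M)^{**} \supseteq \mathcal F(M)$ and a preserved extreme point of $B_{\mathcal F(M)}$ is by definition extreme in $B_{\mathcal F(M)^{**}} = B_{\Lip(M)^*{}^*}$, though one must be slightly careful: ``preserved extreme'' here means extreme in $B_{\Lip(M)^*}$, i.e.\ in the \emph{dual} of $\Lip(M)$, which contains $\mathcal F(M)$ isometrically as a subspace but is not its bidual — so the argument should be phrased using weak$^*$-slices of $B_{\Lip(M)^*}$ directly, which is fine since $\Lip(M)$ still has the Daugavet-type slice property from Theorem~\ref{caragendauga} applied to $\Lip(M)$ itself, predual-style). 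The main obstacle is thus bookkeeping the three spaces $\mathcal F(M) \subseteq \Lip(M)^* \subseteq \mathcal F(M)^{**}$ correctly; everything else is the standard Daugavet slice manipulation already used repeatedly in the excerpt.
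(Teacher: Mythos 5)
Your proposal rests on the claim that a Banach space with the Daugavet property cannot have preserved extreme points in its unit ball, and this claim is false. The space $C[0,1]$ has the Daugavet property, yet the constant function $\mathbf{1}$ is a preserved extreme point of $B_{C[0,1]}$: the bidual $C[0,1]^{**}$ is a commutative $C^*$-algebra with the same unit, and the unit is extreme in the ball of any $C^*$-algebra. What the Daugavet property does rule out is denting points (every slice has diameter $2$), but preserved extremality is a weak-topology condition that survives diameter-$2$ slices. Your slice manipulation in the ``slickest presentation'' shows exactly where the argument breaks: from $\Vert x_0+y\Vert>2-\alpha$ you conclude that $y$ is close to $-x_0$, but this inequality says the opposite --- $y$ is at distance nearly $2$ \emph{from} $-x_0$ --- so $\frac12(x_0+y)$ need not be anywhere near $0$ (for $x_0=\mathbf{1}$ in $C[0,1]$ one has $\Vert\frac12(\mathbf{1}+y)\Vert=1$ for typical $y$ in the slice). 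The intermediate, metric version of the argument has the same problem plus an unsupported step: the inequality $(1-\varepsilon)(d(x,y)+d(u,v))\leq d(x,u)+d(y,v)$ is essentially a strengthened triangle inequality and does not force $u$ near $x$ or $v$ near $y$ (it holds, for instance, when $u,v$ are both very far from $x$ and $y$), so you never actually produce the two pairs exhibiting $\mu$ as a nontrivial midpoint.

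The paper's proof does not pass through the Daugavet property at all and is much closer to the idea you gesture at in your last sentence of the middle paragraph, but run in the correct direction. Since $M$ is a length space, $x$ and $y$ admit approximate midpoints $u_n$ with $\max\{d(x,u_n),d(y,u_n)\}\leq\frac{1+1/n}{2}d(x,y)$. Setting $\mu_n=\frac{2}{1+1/n}\frac{\delta_x-\delta_{u_n}}{d(x,y)}$ and $\nu_n=\frac{2}{1+1/n}\frac{\delta_{u_n}-\delta_y}{d(x,y)}$, one has $\mu_n,\nu_n\in B_{\mathcal F(M)}$ and $\frac{\mu_n+\nu_n}{2}\to\frac{\delta_x-\delta_y}{d(x,y)}$ in norm; the characterisation of preserved extreme points in \cite[Proposition 9.1]{GMZ14} then forces $\mu_n\to\mu$ weakly, hence $\delta_{u_n}\to\frac{\delta_x+\delta_y}{2}$ weakly, which is absurd (test against a Lipschitz function vanishing on $\{x,y\}$ and equal to $\frac{d(x,y)}{4}$ off a small neighbourhood of $\{x,y\}$, recalling that $d(u_n,\{x,y\})\to\frac{d(x,y)}{2}$). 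If you want to salvage your approach, this is the mechanism to use: the length-space hypothesis enters through midpoints, not through the Daugavet property of $\mathcal F(M)$.
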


\begin{proof}
Assume that there is some preserved extreme point of $B_{\mathcal F(M)}$, which must be of the form $\frac{\delta_x-\delta_y}{d(x,y)}$ for some $x,y\in M$, $x\neq y$. Take a sequence $\{u_n\}\subset M$ such that $\max\{d(x,u_n),d(y,u_n)\}\leq \frac{1+1/n}{2} d(x,y)$ for every $n$, which exists since $M$ is a length space. Consider
\[ \mu_n = \frac{2}{1+1/n} \frac{\delta_x-\delta_{u_n}}{d(x,y)}, \quad \nu_n = \frac{2}{1+1/n}\frac{\delta_{u_n}-\delta_y}{d(x,y)}. \]
Then $||\mu_n||,||\nu_n||\leq 1$ and $\frac{\mu_n+\nu_n}{2} \stackrel{\norma}{\to} \frac{\delta_x-\delta_y}{d(x,y)}$. Since $\frac{\delta_x-\delta_y}{d(x,y)}$ is a preserved extreme point, this implies that $\mu_n\stackrel{w}{\to}\frac{\delta_x-\delta_y}{d(x,y)}$~\cite[Proposition 9.1]{GMZ14}. It follows that $\delta_{u_n}\stackrel{w}{\to}\frac{\delta_x+\delta_y}{2}$, which is impossible. 
\end{proof}

Note that the previous result proves that, if $M$ is compact, then $\mathcal F(M)$ has the Daugavet property if, and only if, $B_{\mathcal F(M)}$ does not have any preserved extreme point. 

\begin{remark}
While preparing the present paper we have learned that Aliaga and Guirao~\cite{AG} have proved that if $M$ is compact and $x,y$ are two distinct points in $M$, then $[x,y]=\{x,y\}$ if and only if the molecule $\frac{\delta_x-\delta_y}{d(x,y)}$ is a preserved extreme point of $B_{\mathcal F(M)}$. This solves in the affirmative the open problem mentioned on page 53 of \cite{wea}. Let us remark that the result does not hold in general when $M$ is not compact. Indeed, in \cite[Example 2.4]{ikw}, a length metric space $M$ is constructed such that $[x,y]=\{x,y\}$ for \emph{all} $x\neq y \in M$.
Despite this, $\closedball{\Free(M)}$ has \emph{no} preserved extreme point as is implied by Proposition~\ref{prop:amcpreserved}.
\end{remark}

Let us end the section by giving the following characterisation under compactness assumptions, which improves~\cite[Theorem 3.3]{ikw}.

\begin{corollary}\label{caracompa}
Let $M$ be a pointed compact metric space. The following assertions are equivalent:
\begin{enumerate}[(i)]
\item $M$ is geodesic.
\item For every $x, y\in M$ there is $z\in [x,y]\setminus\{x,y\}$.
\item $\Lip(M)$ has the Daugavet property.
\item The unit ball of $\mathcal F(M)$ does not have any preserved extreme point. 
\item The unit ball of $\mathcal F(M)$ does not have any strongly exposed point.
\item The norm of $\Lip(M)$ does not have any point of G\^ateux differentiability.
\item The norm of $\Lip(M)$ does not have any point of Fr\'echet differentiability. 
\end{enumerate}
\end{corollary}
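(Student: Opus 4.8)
The plan is to assemble the seven conditions from the material already established, isolating the (two) places where compactness is genuinely used. First I would dispose of the purely metric equivalences. The equivalence of (i) and (ii) is exactly Proposition~\ref{p:GeneralStandard}, applied to $M$ which is complete, being compact. The equivalence of (i) and (iii) follows by combining Theorem~\ref{caracolocal} (which gives (iii) $\Leftrightarrow$ ``$M$ is a length space'') with the elementary fact recorded after Lemma~\ref{lemma:charlength} that every compact length space is geodesic, the reverse implication being trivial. No new argument is needed here.

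Next I would close the cycle (iii) $\Rightarrow$ (iv) $\Rightarrow$ (v) $\Rightarrow$ (iii). For (iii) $\Rightarrow$ (iv): by Theorem~\ref{caracolocal} condition (iii) makes $M$ a length space, so Proposition~\ref{prop:amcpreserved} applies and $\closedball{\Free(M)}$ has no preserved extreme point. For (iv) $\Rightarrow$ (v): by the first lemma of Section~\ref{sec:exposed} (whose proof uses \cite[Proposition~9.1]{GMZ14}) every strongly exposed point of $\closedball{\Free(M)}$ is a preserved extreme point, so the absence of the latter entails the absence of the former. For (v) $\Rightarrow$ (iii) I would argue by contraposition: if $\Lip(M)$ fails the Daugavet property then, by Theorem~\ref{caracolocal} and Proposition~\ref{prop:charlengthspace}, $M$ is not local; since $M$ is compact, \cite[Proposition~2.8]{ikw} then forces $M$ to fail property (Z), so there is a pair of distinct points $(x,y)$ failing property (Z), and the implication \ref{charstrexp4}$\Rightarrow$\ref{charstrexp1} of Theorem~\ref{th:charstrexp} produces a strongly exposed point $\frac{\delta_x-\delta_y}{d(x,y)}$ of $\closedball{\Free(M)}$, contradicting (v).

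It remains to splice in the differentiability conditions. Since Fr\'echet differentiability implies G\^ateaux differentiability, (vi) $\Rightarrow$ (vii) is immediate, while (vii) $\Rightarrow$ (vi) is Corollary~\ref{cor:gatfre}; hence (vi) $\Leftrightarrow$ (vii). To link them to (v) I would invoke the classical Smulyan lemma, quoted before Lemma~\ref{lemma:normingsmulyan}, for the predual pair $\Free(M)^{*}=\Lip(M)$: a functional $f\in\sphere{\Lip(M)}$ is a point of Fr\'echet differentiability of $\norm{\cdot}_{L}$ if and only if $f$ strongly exposes some point of $\closedball{\Free(M)}$. Therefore $\norm{\cdot}_{L}$ admits a point of Fr\'echet differentiability precisely when $\closedball{\Free(M)}$ admits a strongly exposed point, which is the equivalence of (v) and (vii), and together with the chain above this settles all seven conditions.

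I do not expect any real obstacle: the corollary is essentially a bookkeeping argument resting on Theorems~\ref{caracolocal} and~\ref{th:charstrexp}, Propositions~\ref{p:GeneralStandard} and~\ref{prop:amcpreserved}, Corollary~\ref{cor:gatfre} (itself built on Proposition~\ref{prop:wese}), and Smulyan's lemma. The genuine uses of compactness are the implication ``length $\Rightarrow$ geodesic'' and, inside the step (v) $\Rightarrow$ (iii), the implication ``not local $\Rightarrow$ fails property (Z)'' via \cite[Proposition~2.8]{ikw}; the only point calling for mild care is applying the Smulyan correspondence to the pair $(\Free(M),\Lip(M))$ with the exposing functional normalised, which is exactly what the definition of a strongly exposed point supplies.
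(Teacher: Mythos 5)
Your proposal is correct and follows essentially the same route as the paper: both reduce everything to Theorems~\ref{caracolocal} and~\ref{th:charstrexp}, Corollary~\ref{cor:gatfre} with Smulyan's lemma, and the two compactness inputs you correctly isolate (``compact length $\Rightarrow$ geodesic'' and ``compact + not local $\Rightarrow$ fails property (Z)'' via \cite[Proposition~2.8]{ikw}). The only cosmetic difference is the passage to (iv): the paper derives it directly from (ii) by writing each molecule $\frac{\delta_x-\delta_y}{d(x,y)}$ as a nontrivial convex combination through a point $z\in[x,y]\setminus\{x,y\}$ (so it is not extreme, hence not preserved extreme), whereas you invoke Proposition~\ref{prop:amcpreserved} from (iii); both are legitimate and the latter is exactly the remark the paper itself makes after that proposition.
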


\begin{proof}
The equivalence between (i) and (iii) follows from Theorem \ref{caracolocal} and the fact that compact length spaces are geodesic. Moreover, (i)$\Rightarrow$(ii) follows from Lemma \ref{lemma:charlength}. Now, if (ii) holds then every molecule $\frac{\delta_x-\delta_y}{d(x,y)}$ can be written as a non-trivial convex combination as
\[\frac{\delta_x-\delta_y}{d(x,y)} = \frac{d(x,z)}{d(x,y)}\frac{\delta_x-\delta_z}{d(x,z)} + \frac{d(z,y)}{d(x,y)} \frac{\delta_z-\delta_y}{d(z,y)}\]
and so it is not an extreme point of $B_{\mathcal F(M)}$. Since all the preserved extreme points are molecules, (iv) holds.

It is clear that (iv) implies (v). If (v) holds then by Theorem \ref{th:charstrexp} we have that $M$ has property (Z). Since $M$ is compact then Proposition 2.8 in \cite{ikw} says that $M$ is local, and so a length space by Proposition \ref{prop:charlengthspace}. This shows that (v) implies (i). Finally, the equivalence between (v), (vi) and (vii) follows from Corollary \ref{cor:gatfre} and Smulyan's lemma (and holds even in the non-compact case).
\end{proof}

\begin{remark}
Note that the previous corollary means that, whenever $M$ is a pointed compact metric space, then  either $\mathcal{F}(M)$ has the Daugavet property or its unit ball is dentable. Such extreme behaviour related to the diameter of the slices of the unit ball does not hold for its dual $\Lip(M)$. Indeed, in \cite{ivakhno} it is proved that every slice of $B_{\Lip(M)}$ has diameter two whenever $M$ is unbounded or it is not uniformly discrete. 
Consequently $M=[0,1]\cup [2,3]$ is an example of a compact metric space such that every slice of $B_{\Lip(M)}$ has diameter two but $\Lip(M)$ fails the Daugavet property.
\end{remark}

\section{Remarks and open questions}\label{sec:remarks}

Corollary \ref{caracompa} motivates the following question.

\begin{question}
Let $M$ be a metric space. If $M$ has the property (Z), is $M$ a length space?
\end{question}

Corollary \ref{caracompa} says that the answer is affirmative when $M$ is compact. 
Moreover, the affirmative answer to this problem would imply the following dichotomy for every metric space $M$: 
either $\mathcal F(M)$ has the Daugavet property or its unit ball has a strongly exposed point and, in particular, is dentable.

Though we do not know the answer to the previous question in the general case, we can give an affirmative answer in the context of subsets of an $\mathbb R$-tree.

\begin{proposition}\label{(Z)implengthRtree}
Let $T$ be a 
real-tree. 
Let $M \subset T$ be complete.  
If $M$ has (Z), then $M$ is geodesic.
\end{proposition}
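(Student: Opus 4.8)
The plan is to translate the statement into the language of the geodesics of $T$, reduce to a local question about a single geodesic, and then derive a contradiction from the assumed absence of intermediate points using property (Z) together with the completeness of $M$.

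First I would record the relevant real-tree facts. For $x\neq y$ in $T$ there is a unique metric segment joining them, which I denote $[x,y]_T$; a point $z$ lies on it exactly when $d(x,z)+d(z,y)=d(x,y)$, so for $x,y\in M$ the metric segment $[x,y]$ of Proposition~\ref{p:GeneralStandard} equals $M\cap[x,y]_T$. Moreover $T$ carries a $1$-Lipschitz nearest-point (``gate'') map $\pi\colon T\to[x,y]_T$, where $\pi(z)$ is the median of $x,y,z$, and one has $d(z,\pi z)=(x,y)_z$, $d(x,\pi z)=(y,z)_x$, $d(y,\pi z)=(x,z)_y$, and $d(x,z)=d(x,\pi z)+d(\pi z,z)$. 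By Proposition~\ref{p:GeneralStandard} it therefore suffices to show that for all $x\neq y$ in $M$ there is a point of $M$ on the open geodesic $(x,y)_T$.

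Suppose, for contradiction, that $M\cap(x,y)_T=\emptyset$; equivalently $(x,y)_z>0$ for every $z\in M\setminus\{x,y\}$. Apply property (Z) to the pair $(x,y)$ to get, for each $n$, a point $z_n\in M\setminus\{x,y\}$ with $(x,y)_{z_n}\leq\frac1n\min\{d(x,z_n),d(y,z_n)\}$. Using $d(x,z_n)=(y,z_n)_x+(x,y)_{z_n}$, $d(y,z_n)=(x,z_n)_y+(x,y)_{z_n}$ and $(y,z_n)_x+(x,z_n)_y=d(x,y)$, one gets $(x,y)_{z_n}\to0$, and also $(y,z_n)_x>0$ and $(x,z_n)_y>0$ (otherwise the (Z)-inequality would force $z_n\in\{x,y\}$), i.e. the projections $\pi(z_n)$ lie strictly inside $[x,y]_T$. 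If some subsequence of the parameters $d(x,\pi z_n)$ stays in a fixed compact subinterval of $(0,d(x,y))$, then along it $\pi(z_n)$ converges to some $p\in(x,y)_T$, and since $d(z_n,\pi z_n)=(x,y)_{z_n}\to0$ the points $z_n$ converge to $p$; as $M$ is complete (hence closed in $T$) this gives $p\in M\cap(x,y)_T$, a contradiction. So, after passing to a subsequence, $\pi(z_n)\to x$ (the case $\pi(z_n)\to y$ is symmetric), hence $z_n\to x$.

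In this remaining situation one iterates property (Z), applied now to pairs $(w,y)$ and $(x,w)$ with witnesses $w\in M$ already produced, in order to move the projections away from the endpoint $x$: if $w$ is close to $[x,y]_T$ with $\pi w$ near $x$, then (Z) applied to $(w,y)$ yields $w'$ close to $[w,y]_T=[w,\pi w]_T\cup[\pi w,y]_T$, and analysing which of these two pieces carries $\pi_{[w,y]_T}(w')$ — using that in a real-tree the gate onto a sub-geodesic composes with $\pi$ — shows that, if the (Z)-parameters are chosen to decrease fast enough, either the projection of $w'$ onto $[x,y]_T$ moves strictly toward the middle while $w'$ remains close to $[x,y]_T$, or $w'$ remains close to the \emph{fixed} interior point $\pi w$. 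Iterating and extracting produces a Cauchy sequence in $M$ whose limit is a point of $(x,y)_T$, contradicting $M\cap(x,y)_T=\emptyset$. Hence $M$ is geodesic. The \textbf{main obstacle} is precisely this iteration: organising it so that the successive (Z)-witnesses actually converge and their projections do not drift all the way back to $x$ or $y$; this needs a careful quantitative bookkeeping of the Gromov products $(x,y)_{w_k}$ and of the positions of $\pi(w_k)$ on $[x,y]_T$.

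An alternative route factors the argument through Proposition~\ref{prop:charlengthspace}: one first shows that property (Z) makes $M$ local (for a $1$-Lipschitz $f$, split a pair of near-maximal slope by a (Z)-witness and observe that, since $d(a,z)+d(z,b)=d(a,b)+2(a,b)_z$, a tiny Gromov product $(a,b)_z$ forces one of the two halves to retain almost the full slope; then iterate), so $M$ is a length space; and one then uses that a complete length subset of a real-tree is automatically geodesic, because a rectifiable curve in $M$ from $x$ to $y$ of length $<d(x,y)+\varepsilon$ projects under $\pi$, by the intermediate value theorem, within $\varepsilon/2$ of every point of $[x,y]_T$, so that $[x,y]_T\subseteq\overline M=M$. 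Even along this route the hard part is the locality statement, so the same bookkeeping difficulty reappears.
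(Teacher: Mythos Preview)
Your setup is fine and the reduction via Proposition~\ref{p:GeneralStandard} to showing $M\cap(x,y)_T\neq\emptyset$ is exactly right, as is the observation that a (Z)-sequence $z_n$ whose projections $\pi(z_n)$ stay in a compact subinterval of $(0,d(x,y))$ already yields the desired interior point. But the remaining case --- $\pi(z_n)\to x$ --- is not handled: your iteration scheme is only described, not carried out, and you explicitly flag it as the main obstacle. The dichotomy ``either $\pi(w')$ moves strictly toward the middle, or $w'$ stays near $\pi w$'' does not by itself produce a Cauchy sequence with limit in $(x,y)_T$: in the first branch the displacement toward the middle can be arbitrarily small, and in the second branch $\pi w$ is near $x$, so $w'$ is still near $x$. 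Nothing prevents the sequence of witnesses from oscillating or drifting back to the endpoints. Your alternative route through locality has the same issue: splitting a near-maximal pair by a (Z)-witness gives one half of near-maximal slope, but there is no control on which half, so the distances need not shrink. In both routes what is missing is a mechanism that picks out an \emph{extremal} pair at which (Z) becomes incompatible with the geometry.

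The paper supplies exactly such a mechanism, and in doing so takes a rather different path: it first proves (for arbitrary complete $M$, not just subsets of trees) that property (Z) forces $M$ to be connected. The tool is Ekeland's variational principle applied to $d\restricted_{U\times V}$ for a hypothetical clopen splitting $M=U\sqcup V$: this yields a pair $(x,y)\in U\times V$ that is ``almost minimising'' in the sense that $d(x,y)\leq d(u,v)+\alpha\,d_1((x,y),(u,v))$ for all $(u,v)\in U\times V$, and plugging a (Z)-witness $z$ for $(x,y)$ into this inequality gives an immediate contradiction. Once $M$ is known to be connected, the tree structure finishes the job trivially: if some $\varphi(t)\in(x,y)_T$ were missing from $M$, the metric projection onto $[x,y]_T$ would split $M$ into two pieces at positive distance. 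So the paper avoids your iteration entirely by passing through connectedness and invoking a variational principle --- precisely the ``careful quantitative bookkeeping'' you anticipated, but packaged as a single application of Ekeland rather than an ad hoc inductive construction.
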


In order to prove the previous proposition we need the following result.

\begin{proposition}
Let $M$ be a complete 
metric space with property (Z). Then $M$ is connected.
\end{proposition}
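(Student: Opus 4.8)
The plan is to argue by contradiction. Suppose $M$ is not connected, so $M = A \cup B$ with $A,B$ nonempty, disjoint, and open (hence also closed). Fix $a_0\in A$ and $b_0\in B$. The idea is to use property (Z) repeatedly to "shrink" a pair $(a,b)\in A\times B$ toward a pair realising $d(A,B)$, while keeping the total displacement summable, and then to reach a contradiction by applying (Z) one last time to the limiting pair. This is most cleanly organised via Zorn's lemma.

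Concretely, I would put on $A\times B$ the partial order $(a',b')\preceq(a,b)$ defined by $d(a',b')\le d(a,b)$ and $d(a,a')+d(b,b')\le 2\big(d(a,b)-d(a',b')\big)$. Reflexivity is clear; transitivity follows by adding the two displacement inequalities and using the triangle inequality; antisymmetry holds since $(a',b')\preceq(a,b)\preceq(a',b')$ forces $d(a,b)=d(a',b')$, hence $d(a,a')+d(b,b')=0$. In particular, on a chain any two distinct elements have distinct $d$-values, with the $d$-order matching $\preceq$. The key point, coming from property (Z), is that every $(a,b)\in A\times B$ has a \emph{strict} lower bound in $(A\times B,\preceq)$. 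Indeed, apply (Z) to $(a,b)$ — legitimate since $a\ne b$ as $A\cap B=\emptyset$ — with $\varepsilon=\frac12$, obtaining $z\notin\{a,b\}$ with $d(a,z)+d(z,b)\le d(a,b)+\frac12\min\{d(a,z),d(z,b)\}$. Writing $p=d(a,z)$, $q=d(z,b)$, $D=d(a,b)$, a short case analysis on whether $p\le q$ or $q<p$ yields $p<D$, $q<D$, $p\le 2(D-q)$, and $q\le 2(D-p)$. Since $z\in A$ or $z\in B$, one of $(z,b)$, $(a,z)$ again lies in $A\times B$; in either case it is $\prec(a,b)$ and distinct from it.

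Next I would check that every chain in $(A\times B,\preceq)$ has a lower bound, which together with the previous step contradicts Zorn's lemma (applied to $\preceq^{-1}$) and finishes the proof. A chain is order-isomorphic to a subset of $\mathbb R$ via $(a,b)\mapsto d(a,b)$, so it contains a sequence $(a_n,b_n)$, cofinal from below, with $d(a_n,b_n)$ decreasing to $\rho':=\inf$ over the chain. The defining inequality telescopes to $\sum_n\big(d(a_n,a_{n+1})+d(b_n,b_{n+1})\big)\le 2\big(d(a_1,b_1)-\rho'\big)<\infty$, so $(a_n)$ and $(b_n)$ are Cauchy; by completeness they converge, and since $A,B$ are closed their limits form a pair $(a^*,b^*)\in A\times B$ with $d(a^*,b^*)=\rho'$. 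A tail of the same series gives $(a^*,b^*)\preceq(a_n,b_n)$ for all $n$, and hence, by cofinality and transitivity, $(a^*,b^*)$ is below every element of the chain.

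The only genuine subtlety, and the step I would be most careful about, is the case analysis extracting $p,q<D$ and $p\le 2(D-q)$, $q\le 2(D-p)$ from property (Z): the crucial realisation is that (Z) always returns a $z$ that is "almost between" $a$ and $b$, so that whichever of $A,B$ contains $z$, the resulting new pair is genuinely closer, with displacement bounded by (twice) the gain in distance. Everything else — that $\preceq$ is a partial order, extracting the cofinal sequence, the Cauchy argument — is routine. Note the argument uses only completeness and property (Z), as required; it in fact shows slightly more, namely that the Zorn-minimal pair would realise $d(A,B)$, which is exactly what (Z) forbids.
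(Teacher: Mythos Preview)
Your proof is correct, and at heart it is the same argument as the paper's: find a pair $(x,y)\in A\times B$ that is ``approximately minimal'' for $d$ in the sense that any improvement in $d$ costs a comparable displacement, and then observe that property~(Z) applied to $(x,y)$ produces a forbidden improvement. The paper obtains this pair by a one-line invocation of Ekeland's variational principle on the function $d\restricted_{A\times B}$ with respect to the sum metric on $M^2$, yielding $(x,y)$ with $d(x,y)\le d(u,v)+\alpha\big(d(x,u)+d(y,v)\big)$ for all $(u,v)\in A\times B$; your Zorn argument with the order $(a',b')\preceq(a,b)\iff d(a,a')+d(b,b')\le 2\big(d(a,b)-d(a',b')\big)$ is exactly one of the standard proofs of Ekeland's principle, specialised to this situation. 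So what you have done is unpack Ekeland inline: more self-contained, but longer; the paper's version is shorter because it outsources the descent to a named theorem. The ``case analysis'' you flag as the delicate step corresponds precisely to the paper's final chain of inequalities after applying~(Z) to the Ekeland point.
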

\begin{proof}
Let us assume that $U,V \subset M$ are clopen, disjoint and $U \cup V = M$.
Then $U \times V$ is a closed subset of the complete metric space $(M^2,d_1)$ where $d_1((a,b),(c,d))=d(a,c)+d(b,d)$.
Let $\alpha \in (0,1)$. 
By the Ekeland's variational principle applied to the function
$d\restricted_{U\times V}$ there is $(x,y) \in U\times V$ such that for every $(u,v) \in U\times V$ we have
\[
 d(x,y) \leq d(u,v)+\alpha(d(x,u)+d(y,v)).
\]
Now let $\varepsilon \in (0,1-\alpha)$ 
and let $z \in M\setminus\set{x,y}$ satisfy (Z) with this $\varepsilon$.
We assume that $z \in V$ and we set $(u,v)=(x,z)$ in the above inequality.
We have
\begin{equation}\label{e:crucial}
 \begin{split}
  d(x,y)& \leq d(x,z)+\alpha d(y,z)\\
  &= d(x,z)+d(y,z)-(1-\alpha)d(y,z)\\
  &\leq d(x,y) +\varepsilon \dist(z,\set{x,y}) - (1-\alpha) d(y,z)
 \end{split}
\end{equation}
This implies that
\[ d(y,z)\leq \frac{\varepsilon}{1-\alpha} d(z,\set{x,y}) < d(z, \set{x,y}) \]
which is a contradiction.
\end{proof}

This proposition yields immediately that $M$ is perfect (i.e. has no isolated points) whenever $M$ is complete and has (Z).

\begin{proof}[Proof of Proposition \ref{(Z)implengthRtree}]
If $M$ is a connected complete subset of an $\Real$-tree $T$ we get that $M$ is geodesic.
Indeed, for any two points $x,y \in M$ let $\varphi:[0,d(x,y)] \to T$ be the unique 1-Lipschitz map such that $\varphi(0)=x$ and $\varphi(d(x,y))=y$.
We will denote $\pi:T \to \varphi([0,d(x,y)])$ the metric projection onto $\varphi([0,d(x,y)])$.
It is well known to be continuous.
If there is $t \in (0,d(x,y))$ such that $\varphi(t)\notin M$, then by completeness of $M$ there is $\varepsilon>0$ such that $\closedball{T}(\varphi(t),\varepsilon) \cap M =\emptyset$.
Since for every $u \in U=M\cap \pi^{-1}(\varphi([0,t]))$ and every  $v \in V=M \cap \pi^{-1}(\varphi((t,d(x,y)]))$ 
we have $d(u,v)=d(u,\varphi(t))+d(\varphi(t),v)$ it follows that
$\dist(U,V)\geq 2\varepsilon$ which is impossible as $M=U\cup V$.
\end{proof}

Now we will end the section with a problem about the Daugavet property in vector-valued Lipschitz functions spaces, for which we will have to introduce a bit of notation. Given a metric space $M$ and a Banach space $X$, we consider
\[ \Lip(M,X):=\left\{f\colon M\longrightarrow X: f(0)=0\mbox{ and } \sup\limits_{x\neq y\in M}\frac{\Vert f(x)-f(y)\Vert}{d(x,y)}<\infty\right\}.\]
This space is a Banach space under the norm given by the smallest Lipschitz constant. Note that the space $\Lip(M,X)$ is isometrically isomorphic to $L(\mathcal F(M),X)$, the space of bounded linear operators from $\Free(M)$ to $X$.

\begin{proposition}\label{localvector}
Let $M$ be a length space and let $X$ be a Banach space. Then, for every Lipschitz map $f\colon M\to X$ and every $\varepsilon>0$ there are $x\neq y\in M$ such that $\frac{\Vert f(x)-f(y)\Vert}{d(x,y)}>\Vert f\Vert-\varepsilon$ and that $d(x,y)<\varepsilon$.
\end{proposition}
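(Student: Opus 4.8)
The plan is to mimic the scalar argument that shows a complete length space is local (the direction \ref{it:len3}$\Leftarrow$\ref{it:len1} spirit, realized via the curve-integral sketch of \ref{it:len1}$\Rightarrow$\ref{it:len2} in Proposition~\ref{prop:charlengthspace}), but carried out for a vector-valued $f$ by composing with a suitable norm-one functional. First I would fix $f\in\Lip(M,X)$ with $\norm{f}_{L}=1$ (rescaling if necessary) and $\varepsilon>0$. Choose $x_0\neq y_0\in M$ with $\frac{\norm{f(x_0)-f(y_0)}}{d(x_0,y_0)}>1-\eta$ for a small $\eta$ to be fixed later in terms of $\varepsilon$, and pick $x^*\in S_{X^*}$ with $x^*(f(x_0)-f(y_0))=\norm{f(x_0)-f(y_0)}$. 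Then $g:=x^*\circ f\in\Lip(M)$ has $\norm{g}_{L}\leq 1$ and $\frac{g(x_0)-g(y_0)}{d(x_0,y_0)}>1-\eta$.

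Next I would invoke the length-space structure exactly as in the sketch after Proposition~\ref{prop:charlengthspace}: by Lemma~\ref{lemma:charlength}(b) and an iteration producing a rectifiable curve, there is a $1$-Lipschitz curve $\varphi\colon[0,(1+\tfrac{\eta}{2})d(x_0,y_0)]\to M$ with $\varphi(0)=x_0$, $\varphi((1+\tfrac{\eta}{2})d(x_0,y_0))=y_0$ (here one uses completeness of $M$ to actually obtain such a curve from the midpoint condition, or alternatively one works with finite $\eta$-chains along near-geodesics and avoids curves altogether). Writing $L:=(1+\tfrac{\eta}{2})d(x_0,y_0)$ and using that $g\circ\varphi$ is $1$-Lipschitz on an interval so absolutely continuous,
\[
g(y_0)-g(x_0)=\int_0^{L}(g\circ\varphi)'(t)\,dt,
\]
and since the total mass forces the integrand to exceed $1-\tfrac{\eta}{2}$ on a set $A\subset[0,L]$ of positive measure (an averaging/Chebyshev estimate, using $|(g\circ\varphi)'|\leq 1$ a.e.), one can in fact choose $A$ to consist of Lebesgue density points and hence find $s<t$ in $A$ with $t-s$ as small as we like, say $t-s<\varepsilon/2$, and $\frac{g(\varphi(t))-g(\varphi(s))}{t-s}>1-\tfrac{\eta}{2}$. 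Set $x=\varphi(t)$, $y=\varphi(s)$: then $d(x,y)\leq t-s<\varepsilon$ (by $1$-Lipschitzness of $\varphi$), and
\[
\frac{\norm{f(x)-f(y)}}{d(x,y)}\geq\frac{g(\varphi(t))-g(\varphi(s))}{d(\varphi(t),\varphi(s))}\geq\frac{g(\varphi(t))-g(\varphi(s))}{t-s}>1-\frac{\eta}{2}>1-\varepsilon
\]
provided $\eta\leq 2\varepsilon$. This yields the claimed $x\neq y$ (they are distinct since $g(x)\neq g(y)$).

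The main obstacle I anticipate is the passage from "near-geodesic" to an honest rectifiable curve $\varphi$ to which one can apply the fundamental theorem of calculus: in a general complete length space the infimal-length curves need not exist, so one must either (i) quote that a complete length space admits, for each pair $x_0,y_0$ and each $\eta>0$, a $1$-Lipschitz curve of length at most $(1+\eta)d(x_0,y_0)$ joining them (a standard consequence of Lemma~\ref{lemma:charlength}(b) by repeated midpoint bisection plus completeness), or (ii) avoid curves entirely and run a purely discrete argument: iterate the midpoint lemma to build a finite chain $x_0=z_0,z_1,\dots,z_N=y_0$ with $\sum d(z_{i-1},z_i)\leq(1+\eta)d(x_0,y_0)$ and $\max_i d(z_{i-1},z_i)<\varepsilon$, observe $\sum\big(g(z_{i-1})-g(z_i)\big)=g(x_0)-g(y_0)>(1-\eta)d(x_0,y_0)$, and conclude by averaging that some consecutive pair $(z_{i-1},z_i)$ satisfies $\frac{g(z_{i-1})-g(z_i)}{d(z_{i-1},z_i)}>1-O(\eta)$; then take $(x,y)=(z_{i-1},z_i)$. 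Route (ii) is cleaner and self-contained, so that is the one I would write up, using only Lemma~\ref{lemma:charlength} and completeness of $M$; everything else is the Hahn--Banach functional trick and a one-line averaging estimate.
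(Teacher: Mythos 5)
Your proof is correct and follows essentially the same route as the paper: scalarise $f$ with a norming functional $x^*\in S_{X^*}$ and then exploit that a length space is local. The only difference is that you re-derive locality via the chain/averaging argument (your route (ii), which does work), whereas the paper simply cites Proposition~\ref{prop:charlengthspace}, so your write-up can be shortened by invoking that proposition directly.
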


\begin{proof}
Pick a positive $\varepsilon$, a pair of points $u\neq v\in M$ and $x^*\in S_{X^*}$ such that
\[\frac{x^*(f(u))-x^*(f(v))}{d(u,v)}>\Vert f\Vert-\varepsilon\]
holds. This means that the real Lipschitz function $x^*\circ f$ has Lipschitz norm bigger than $\Vert f\Vert-\varepsilon$. Since $M$ is local we can find $x\neq y\in M$ such that $d(x,y)<\varepsilon$ and that
\[\Vert f\Vert-\varepsilon<\frac{x^*(f(x)-f(y))}{d(x,y)}\leq \frac{\Vert f(x)-f(y)\Vert}{d(x,y)}.\]
Since $\varepsilon>0$ was arbitrary the result follows.
\end{proof}

Let $M$ be a metric space and $X$ be a Banach space. According to~\cite{blr} the pair $(M,X)$ is said to have the \emph{contraction-extension property} if given $N\subseteq M$ and a Lipschitz map $f\colon N\longrightarrow X$, there exists a Lipschitz map $F\colon M\longrightarrow X$ extending $f$ such that
\[\Vert F\Vert_{\Lip(M,X)}=\Vert f\Vert_{\Lip(N,X)}.\]

Note that, in the particular case of $M$ being a Banach space, the definition given above agrees with the one given in~\cite{beli}.

Let us give some examples of pairs which have the contraction-extension property. First of all, given a metric space $M$, the pair $(M,\mathbb R)$ has the contraction-extension property (using the infimal convolution formula of McShane-Whitney).
In addition, in~\cite[Chapter 2]{beli} we can find some examples of Banach spaces $X$ such that the pair $(X,X)$ satisfies the contraction-extension property such as Hilbert spaces and $\ell_\infty^n$. Finally, if $Y$ is a strictly convex Banach space such that there exists a Banach space $X$ with $\dim(X)\geq 2$ and verifying that the pair $(X,Y)$ has the contraction-extension property, then $Y$ is a Hilbert space~\cite[Theorem 2.11]{beli}.

Now we can generalise~\ref{caracolocal1}$\Rightarrow$\ref{caracolocal2} in Theorem~\ref{caracolocal} to the vector-valued framework.

\begin{proposition}
Let $M$ be a pointed length space and $X$ be a Banach space such that the pair $(M,X)$ has the contraction-extension property. Then $\Lip(M,X)$ has the Daugavet property.
\end{proposition}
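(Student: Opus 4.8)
The plan is to mimic the structure of the proof of \ref{caracolocal1}$\Rightarrow$\ref{caracolocal2} in Theorem~\ref{caracolocal}, using characterisation \eqref{caragendauga3} of Theorem~\ref{caragendauga}: it suffices to show that for every $f,g\in S_{\Lip(M,X)}$ and every $\varepsilon>0$ one has
\[
g\in \clco\left\{u\in (1+\varepsilon)B_{\Lip(M,X)}: \Vert f+u\Vert>2-\varepsilon\right\}.
\]
First I would fix $n\in\mathbb N$ and use Proposition~\ref{localvector} together with the fact that $M$ is spreadingly local (Proposition~\ref{prop:charlengthspace}) applied to the real Lipschitz function $x^*\circ f$ for a suitable norming $x^*\in S_{X^*}$, in order to find a radius $r>0$ and, for every sufficiently small $\delta>0$, pairs $x_i\neq y_i\in M$ with $d(x_i,y_i)<\delta$, with $\frac{\Vert f(x_i)-f(y_i)\Vert}{d(x_i,y_i)}>1-\varepsilon$, and with the balls $B(x_i,r)$ pairwise disjoint. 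This is exactly the spreading locality argument of the scalar case, only now the quantity being pushed up is the norm $\Vert f(x_i)-f(y_i)\Vert$ rather than a difference of scalars.

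The key new ingredient is the local modification step. For each $i$ I would use the contraction-extension property of $(M,X)$ to build $f_i\in(1+\varepsilon)B_{\Lip(M,X)}$ such that $f_i=f$ on $\{x_i,y_i\}$ and $f_i=g$ on $M\setminus B(x_i,r)$. Concretely, one defines $f_i$ on the closed set $N_i:=\{x_i,y_i\}\cup(M\setminus B(x_i,r))$ by gluing $f$ on $\{x_i,y_i\}$ and $g$ on the complement of the ball; the only points where one must check the Lipschitz estimate are pairs $(p,q)$ with $p\in\{x_i,y_i\}$ and $q\notin B(x_i,r)$, where $d(p,q)\geq r-\delta$ while $\Vert f(p)-g(q)\Vert\leq \Vert f(p)\Vert+\Vert g(q)\Vert$ stays bounded (using that $f,g$ are $1$-Lipschitz and vanish at $0$, together with $\operatorname{diam}$ considerations or by working locally near $0$); choosing $\delta$ small enough relative to $r$ makes the resulting Lipschitz constant on $N_i$ at most $1+\varepsilon$. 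Then the contraction-extension property of $(M,X)$ provides an extension $f_i\colon M\to X$ with $\Vert f_i\Vert_{\Lip(M,X)}\leq 1+\varepsilon$. Since $f_i(x_i)-f_i(y_i)=f(x_i)-f(y_i)$ we get $\Vert f+f_i\Vert\geq \frac{\Vert (f+f_i)(x_i)-(f+f_i)(y_i)\Vert}{d(x_i,y_i)}=\frac{2\Vert f(x_i)-f(y_i)\Vert}{d(x_i,y_i)}>2-2\varepsilon$, so each $f_i$ lies in the set on the right-hand side (after a harmless adjustment of $\varepsilon$).

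Finally I would average: for any $x\in M$ the set $\{i: f_i(x)\neq g(x)\}$ has at most one element because the balls $B(x_i,r)$ are pairwise disjoint, so for any $x\neq y$ the difference $\frac1n\sum_i f_i - g$ changes value at $x$ or $y$ in at most two of the summands, and since each $f_i$ and $g$ has norm at most $1+\varepsilon$ this forces $\Vert g-\frac1n\sum_{i=1}^n f_i\Vert_{L}\leq \frac{C(1+\varepsilon)}{n}$ for an absolute constant $C$. Letting $n\to\infty$ gives $g\in\clco\{u\in(1+\varepsilon)B_{\Lip(M,X)}:\Vert f+u\Vert>2-\varepsilon\}$, hence $\Lip(M,X)$ has the Daugavet property. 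The main obstacle I anticipate is the bookkeeping in the gluing step: ensuring that the patched function $f_i$ on $N_i$ really has Lipschitz constant at most $1+\varepsilon$ uniformly in $i$ requires controlling $\Vert f(p)-g(q)\Vert$ for the cross pairs, which one handles by noting this is bounded by $d(p,0)+d(0,q)$ (or by a localisation argument so that only the geometry near $x_i$ matters), and by shrinking $\delta$; everything else is a direct transcription of the scalar proof, with the contraction-extension property substituting for the elementary scalar extension formula.
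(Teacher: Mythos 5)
Your overall strategy is exactly the paper's: the paper's proof of this proposition consists of the single remark that one repeats the argument of \ref{caracolocal1}$\Rightarrow$\ref{caracolocal2} in Theorem~\ref{caracolocal} verbatim, invoking the contraction-extension property of $(M,X)$ in place of the scalar McShane extension, and you have correctly located where that substitution happens; the spreading-locality step and the $O(1/n)$ averaging step are fine. However, there is a genuine gap in your gluing step, namely in the cross-pair Lipschitz estimate. For $p\in\{x_i,y_i\}$ and $q\in M\setminus B(x_i,r)$ you bound $\Vert f(p)-g(q)\Vert$ by $\Vert f(p)\Vert+\Vert g(q)\Vert\leq d(p,0)+d(q,0)$ and divide by $d(p,q)\geq r-\delta$. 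That quotient is of order $\diam(M)/r$ (and unbounded if $M$ is unbounded); it does not tend to $1$ as $\delta\to 0$, since shrinking $\delta$ shrinks $d(x_i,y_i)$ but affects neither the numerator nor the lower bound $r-\delta$ on the denominator. So the map you glue on $N_i$ need not be $(1+\varepsilon)$-Lipschitz, and the extension step collapses.

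The repair is the normalisation that is implicit in the scalar proof: since the subsequent computation only uses the difference $f_i(x_i)-f_i(y_i)=f(x_i)-f(y_i)$, you are free to glue on $\{x_i,y_i\}$ the shifted map $t\mapsto f(t)-f(x_i)+g(x_i)$ rather than $f$ itself. Then for $p\in\{x_i,y_i\}$ and $q\notin B(x_i,r)$,
\[
\Vert f(p)-f(x_i)+g(x_i)-g(q)\Vert\leq d(p,x_i)+d(x_i,q)\leq d(p,q)+2d(p,x_i)\leq \left(1+\tfrac{2\delta}{r-\delta}\right)d(p,q),
\]
which is at most $(1+\varepsilon)d(p,q)$ once $\delta$ is small relative to $r$; the pair $(x_i,y_i)$ and pairs inside $M\setminus B(x_i,r)$ are handled by the $1$-Lipschitzness of $f$ and $g$, and the base point causes no trouble (adjoin $0$ to $N_i$ with value $0=g(0)$ if necessary). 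With this modification the rest of your argument --- the contraction-extension of the glued map to a $(1+\varepsilon)$-Lipschitz $f_i\colon M\to X$, the lower bound $\Vert f+f_i\Vert>2-2\varepsilon$ coming from $f_i(x_i)-f_i(y_i)=f(x_i)-f(y_i)$, and the averaging estimate $\Vert g-\frac1n\sum_i f_i\Vert\leq\frac{4+2\varepsilon}{n}$ --- goes through exactly as in the scalar case.
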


The proof is identical to the proof of~\ref{caracolocal1}$\Rightarrow$\ref{caracolocal2} in Theorem~\ref{caracolocal} using the contraction-extension property when appropriate.

From the above proposition we get a stability result of the Daugavet property. We will denote by $X\pten Y$ the projective tensor product of Banach spaces. For a detailed treatment and applications of tensor products, we refer the reader to \cite{rya}.

\begin{corollary}\label{corovector}
Let $M$ be a pointed metric space and $X$ be a Banach space. Then:
\begin{itemize}
\item[(a)] If the pair $(M,X)$ has the contraction-extension property and $\Lip(M)$ has the Daugavet property then $\Lip(M,X)=L(\mathcal F(M),X)$ has the Daugavet property.
\item[(b)] If the pair $(M,X^*)$ has the contraction-extension property and $\mathcal F(M)$ has the Daugavet property, then $\mathcal F(M)\pten X$ has the Daugavet property.
\end{itemize}
\end{corollary}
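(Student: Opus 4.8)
The plan is to deduce both parts of Corollary~\ref{corovector} from the preceding Proposition, whose proof is identical to that of \ref{caracolocal1}$\Rightarrow$\ref{caracolocal2} in Theorem~\ref{caracolocal}. For part (a), I would argue as follows. By Theorem~\ref{caracolocal}, $\Lip(M)$ having the Daugavet property is equivalent to $M$ being a length space. Since the pair $(M,X)$ has the contraction-extension property and $M$ is a length space, the Proposition immediately yields that $\Lip(M,X)$ has the Daugavet property. Finally, recall (as noted after the definition of $\Lip(M,X)$) that $\Lip(M,X)$ is isometrically isomorphic to $L(\mathcal F(M),X)$, so the same conclusion holds for the space of operators. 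This is essentially a direct citation chain, the only point to make explicit being the equivalence $M$ length $\Leftrightarrow$ $\Lip(M)$ Daugavet.

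For part (b), the key observation is the classical duality identity $\big(\mathcal F(M)\pten X\big)^* = L(\mathcal F(M),X^*) = \Lip(M,X^*)$; see~\cite{rya}. So $\mathcal F(M)\pten X$ is a predual of $\Lip(M,X^*)$. Now, $\mathcal F(M)$ having the Daugavet property is equivalent to $M$ being a length space by Theorem~\ref{caracolocal}. Since $(M,X^*)$ has the contraction-extension property, the Proposition gives that $\Lip(M,X^*)$ has the Daugavet property. Finally, invoke the standard fact that the Daugavet property passes to preduals (used already in the proof of \ref{caracolocal2}$\Rightarrow$\ref{caracolocal3} in Theorem~\ref{caracolocal}) to conclude that $\mathcal F(M)\pten X$ has the Daugavet property.

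The only genuinely delicate point is verifying the duality $\big(\mathcal F(M)\pten X\big)^* = \Lip(M,X^*)$ isometrically: this follows from the general identity $(Y\pten X)^* = L(Y,X^*)$ applied with $Y=\mathcal F(M)$, together with the isometric identification $L(\mathcal F(M),X^*)=\Lip(M,X^*)$ which in turn comes from the universal property of the Lipschitz-free space (a linear operator $T\colon\mathcal F(M)\to X^*$ corresponds to the Lipschitz map $x\mapsto T(\delta_x)$, with matching norms). I do not expect any real obstacle here beyond bookkeeping; the substantive content is entirely contained in Theorem~\ref{caracolocal} and the preceding Proposition.

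\begin{proof}
(a) Since $\Lip(M)$ has the Daugavet property, Theorem~\ref{caracolocal} gives that $M$ is a length space. As the pair $(M,X)$ has the contraction-extension property, the previous proposition applies and shows that $\Lip(M,X)$ has the Daugavet property. The conclusion follows because $\Lip(M,X)$ is isometrically isomorphic to $L(\mathcal F(M),X)$.

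(b) By the theory of tensor products (see~\cite{rya}) we have the isometric identification
\[
\big(\mathcal F(M)\pten X\big)^* = L(\mathcal F(M),X^*) = \Lip(M,X^*),
\]
so $\mathcal F(M)\pten X$ is a predual of $\Lip(M,X^*)$. Since $\mathcal F(M)$ has the Daugavet property, Theorem~\ref{caracolocal} gives that $M$ is a length space. As the pair $(M,X^*)$ has the contraction-extension property, the previous proposition yields that $\Lip(M,X^*)$ has the Daugavet property. Finally, since the Daugavet property passes to preduals, $\mathcal F(M)\pten X$ has the Daugavet property.
\end{proof}
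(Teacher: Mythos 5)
Your argument is correct and is exactly the derivation the paper intends (the corollary is stated there without proof as an immediate consequence of the preceding proposition): Theorem~\ref{caracolocal} converts the Daugavet hypothesis into $M$ being a length space, the proposition then gives the Daugavet property of $\Lip(M,X)$ resp.\ $\Lip(M,X^*)=L(\mathcal F(M),X^*)=\bigl(\mathcal F(M)\pten X\bigr)^*$, and for (b) one passes to the predual. No issues beyond the bookkeeping you already identified.
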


The question whether the Daugavet property is preserved by projective tensor products from both factors  was posed in~\cite{wer}. It remains, to the best of our knowledge, unsolved. 
It is known, however, that the Daugavet property can not be preserved by projective tensor products from one factor. Indeed, in~\cite[Corollary~4.3]{kkw} an example of a complex $2$-dimensional Banach space $E$ is given so that $L_\infty^\mathbb C([0,1])\pten E$ fails to have the Daugavet property (see~\cite[Remark~3.13]{llr} for real counterexamples failing to fulfil much weaker requirements than the Daugavet property). In spite of the previous fact, we get from Corollary~\ref{corovector} that, for a Hilbert space $H$, the space $\mathcal F(H)\pten H$ has the Daugavet property, a result which we find curious, if nothing else. Moreover, Corollary~\ref{corovector} motivates the following problem.

\begin{question}
Let $M$ be a pointed metric space and $X$ a Banach space. If $\Lip(M)$ has the Daugavet property, does $\Lip(M,X)$ or $\mathcal F(M)\pten X$ have the Daugavet property?
\end{question}

Note that the same problem is open if we replace the Daugavet property with the octahedrality of the norm (see~\cite[Question 2]{blr}).

\section*{Acknowledgements} 
The third author is grateful to Departamento de Matem\'aticas de la Universidad de Murcia for the excellent working conditions during his visit in February 2017.


\begin{thebibliography}{999999}

\bibitem{AG} R.~Aliaga, A.~Guirao, \emph{On the preserved extremal structure of Lipschitz-free spaces}, preprint.

\bibitem{AK} F.~Albiac, N.~J.~Kalton, 
\emph{Lipschitz structure of quasi-Banach spaces}, Israel J. Math. \textbf{170} (2009), 317–335. 

\bibitem {blr} J.~Becerra Guerrero, G.~L\'opez-P\'erez and A.~Rueda Zoca, \textit{Octahedrality in Lipschitz-free Banach spaces}, to appear in Proc. Roy. Soc. Edin. Sect. A. Math.


\bibitem{BM06} J.~Becerra Guerrero, M.~Mart\'in, \textit{The Daugavet property for Lindenstrauss spaces}.  Methods in Banach space theory, 91–96, London Math. Soc. Lecture Note Ser., 337, Cambridge Univ. Press, Cambridge, 2006. 

\bibitem {beli} Y.~Benyamini, J.~Lindenstrauss, \textit{Geometric Nonlinear Functional Analysis, vol. 1}, Amer. Math. Soc. Colloq. Publ., vol. 48, American Mathematical Society, Providence, RI, 2000.

\bibitem{BKSW05} D.~Bilik, V.~Kadets, R.~Shvidkoy, D.~Werner, 
\textit{Narrow operators and the Daugavet property for ultraproducts}, Positivity \textbf{9} (2005), no. 1, 45-62.

\bibitem{BH} M.~R.~Bridson, A.~Haefliger, \textit{Metric spaces of non-positive curvature}, Grundlehren der Mathematischen Wissenschaften [Fundamental Principles of Mathematical Sciences], vol. 319, Springer-Verlag, Berlin, 1999.

\bibitem{BBI} D. Burago, Y. Burago and S. Ivanov, \textit{A Course in Metric Geometry}, Graduate Studies in Mathematics, Vol. 33, Amer. Math. Soc., Providence, 2001.


\bibitem{cdw} M.~C\'uth, M.~Doucha and P.~Wojtaszczyk,
  \textit{On the structure of {L}ipschitz-free spaces}, Proc. Amer. Math.
  Soc. \textbf{144} (2016), 9, p.~3833-3846.



\bibitem{dkp} A.~Dalet, P.~L.~Kaufmann and A.~Proch\'azka, \emph{Characterization of metric spaces whose free space is isometric to $\ell_1$}. Bull. Belg. Math. Soc. Simon Stevin 23 (2016), no. 3, 391-400. 

\bibitem {dau} I.~K.~Daugavet, \textit{On a property of completely continuous operators in the space C}, Uspekhi Mat.
Nauk \textbf{18} (1963), 157-158 (Russian).

\bibitem{DGZ93} R.~Deville, G.~Godefroy, V.~Zizler, \textit{Smoothness and Renorming in Banach Spaces}, in: Pitman
Monographs and Surveys, Vol. 64, Pitman, London, 1993.

\bibitem{DKRRZ16} S.~J.~Dilworth, D.~Kutzarova, N.~L.~Randrianarivony, J.~P.~Revalski, N.~V~Zhivkov. \emph{Lenses and asymptotic midpoint uniform convexity}. J. Math. Anal. Appl. \textbf{436} (2016), no. 2, 810-821. 

\bibitem{GK}
G.~Godefroy, N.~J.~Kalton,  \textit{Lipschitz-free Banach spaces}, Studia Math. \textbf{159} (2003), no. 1, 121-141.

\bibitem{GMZ14} A.~J.~Guirao, V.~Montesinos and V.~Zizler, \textit{On preserved and unpreserved extreme points}, in: J.~C.~Ferrando, M.~L\'opez-Pellicer (Eds.), Descriptive Topology and Functional Analysis, in: Springer Proc. Math. Stat., vol. 80, Springer International Publishing, 2014, pp. 163-193.

\bibitem{hww} P.~Harmand,  D.~Werner  and W.~Werner, {\it $M$-ideals in Banach
spaces  and Banach  algebras}, Lecture  Notes  in Math. {\bf
1547}, Springer-Verlag, Berlin-Heidelberg, 1993.


\bibitem{ivakhno}  Y.~Ivakhno, \textit{Big slice property in the spaces of Lipschitz functions}, Visnyk of Kharkiv National University, Ser. "Mathematics, Applied Mathematics and Mechanics", \textbf{749} (2006), 109-118.

\bibitem {ikw2} Y.~Ivakhno, V.~Kadets and D.~Werner, \textit{Corrigendum to: The Daugavet property for spaces of Lipschitz functions}, Math. Scand. \textbf{104} (2009), 319-319.

\bibitem {ikw} Y.~Ivakhno, V.~Kadets and D.~Werner, \textit{The Daugavet property for spaces of Lipschitz functions}, Math. Scand. \textbf{101} (2007), 261-279.

\bibitem{JLPS} W. B. Johnson, J. Lindenstrauss, D. Preiss and G. Schechtman, \textit{Almost {F}r\'echet differentiability of {L}ipschitz mappings
              between infinite-dimensional {B}anach spaces}, Proc. London Math. Soc. (3), Vol. 84, 2002, No. 3, 711-746.

\bibitem {kkw} V.~Kadets, N.~Kalton and D.~Werner, \emph{Remarks on
    rich subspaces of Banach spaces}, Studia Math. \textbf{159} (2003), no.~2, 195--206.

\bibitem {kssw} V.~Kadets, R.~Shvidkoy, G.~Sirotkin and D.~Werner, \textit{Banach spaces with the Daugavet property}, Trans. Am. Math. Soc. \textbf{352}, no.~2 (2000), 855--873.

\bibitem {llr} J.~Langemets, V.~Lima and A.~Rueda Zoca, \textit{Octahedral norms in tensor products of Banach spaces}, Quarterly J. Math., 1-14. D.O.I: https://doi.org/10.1093/qmath/hax020.

\bibitem{Megginson} R.~E.~Megginson, \emph{An introduction to Banach space theory}, Graduate Texts in Mathematics, 183, Springer-Verlag, New York, 1998.


\bibitem {pr} A.~Proch\'azka and A.~Rueda Zoca, \textit{A characterisation of octahedrality in Lipschitz-free Banach spaces}, to appear in Ann. Inst. Fourier (Grenoble).



\bibitem{rueda} A.~Rueda Zoca, \textit{Daugavet property and separability in Banach spaces}, to appear in Banach J. Math. Anal.

\bibitem {rya} R.~A.~Ryan, \textit{Introduction to tensor products of Banach spaces}, Springer Monographs in Mathematics, Springer-Verlag, London, 2002.

\bibitem {wea} N.~Weaver, \textit{Lipschitz algebras}, World Scientific Publishing Co., Inc., River Edge, NJ, 1999.

\bibitem {wer} D.~Werner, \textit{Recent progress on the Daugavet property}, Irish Math. Soc. Bull. \textbf{46} (2001) 77-97.
\end{thebibliography}
\end{document}